\newtheorem{thm}{Theorem}[section]
\newtheorem{lem}[thm]{Lemma}
\newtheorem{prop}[thm]{Proposition}
\newtheorem*{prob*}{Problem}
\newtheorem*{thm*}{Theorem}
\theoremstyle{definition}
\newtheorem{defn}[thm]{Definition}
\newtheorem*{defn*}{Definition}
\newtheorem{rem}[thm]{Remark}
\newtheorem*{Remarks}{Remarks}
\newtheorem*{rem*}{Remark}
\numberwithin{equation}{section}
\DeclareMathOperator{\Prob}{Prob}
\DeclareMathOperator{\Y}{\mathbb{Y}}
\DeclareMathOperator{\Ewens}{Ewens}
\DeclareMathOperator{\Hoppe}{Hoppe}
\DeclareMathOperator{\Var}{Var}
\begin{document}
%\begin{large}
\title[A refinement of the Ewens sampling formula]
{\bf{A refinement of the Ewens sampling formula}}
\author{Eugene Strahov}
\address{Department of Mathematics, The Hebrew University of Jerusalem, Givat Ram, Jerusalem 91904, Israel}
\email{strahov@math.huji.ac.il}
\keywords{The Ewens sampling formula, the Poisson-Dirichlet distribution and its generalizations, urn models,  genetic population models, random matrices in population genetics.\\
 }

\commby{}
\begin{abstract} We consider an infinitely-many neutral allelic model of population genetics where all alleles
are  divided into a finite number of classes, and each class is characterized by its own mutation rate.  For this model the allelic composition of  a  sample taken from
a very large population of genes is characterized by a random matrix, and  the   problem is to describe  the joint distribution of the matrix entries. The answer is given by
a new generalization of the classical Ewens sampling formula called the refined Ewens sampling formula in this paper. We discuss a Poisson approximation for the refined Ewens sampling formula and present its derivation by several methods. As an application,
we obtain limit theorems
for the numbers of alleles in different asymptotic regimes.

\end{abstract}

\maketitle
%%%%%%%%%%%%%%%%%%%%%%%%%%%%%%%%%%%%%%%%%%%%%%%%%%%%%%%%%%%%%%%%%%%%%%%%%%%%%%%%%%%%%%%%%%%%%%%%
%%%%%%%%%%%%%%%%%%%%%%%%%%%%%%%%%%%%%%%%%%%%%%%%%%%%%%%%%%%%%%%%%%%%%%%%%%%%%%%%%%%%%%%%%%%%%%%%
%%%%%%%%%%%%%%%%%%%%%%%%%%%%%%%%%%%%%%%%%%%%%%%%%%%%%%%%%%%%%%%%%%%%%%%%%%%%%%%%%%%%%%%%%%%%%%%%
%%%%%%%%%%%%%%%%%%%%%%%%%%%%%%%%%%%%%%%%%%%%%%%%%%%%%%%%%%%%%%%%%%%%%%%%%%%%%%%%%%%%%%%%%%%%%%%%
%%%%%%%%%%%%%%%%%%%%%%%%%%%%%%%%%%%%%%%%%%%%%%%%%%%%%%%%%%%%%%%%%%%%%%%%%%%%%%%%%%%%%%%%%%%%%%%%
%%%%%%%%%%%%%%%%%%%%%%%%%%%%%%%%%%%%%%%%%%%%%%%%%%%%%%%%%%%%%%%%%%%%%%%%%%%%%%%%%%%%%%%%%%%%%%%%
%%%%%%%%%%%%%%%%%%%%%%%%%%%%%%%%%%%%%%%%%%%%%%%%%%%%%%%%%%%%%%%%%%%%%%%%%%%%%%%%%%%%%%%%%%%%%%%%
\section{Introduction}
\subsection{The classical Ewens sampling formula}
 In the context of population genetics,
the classical Ewens sampling formula \cite{Ewens} arises as follows. It is assumed that
a very large population of genes evolves according to the neutral Wright-Fisher model with mutations. Each individual in the population has exactly one parent, and everyone
has an equal chance of reproductive success. Mutations are
characterized by a constant mutation rate, and each mutation leads to a new allele\footnote{The paper uses a few elementary notions from
population genetics. In particular, by an allele one means an alternative form of a gene that arises by mutation. For a brief description of genetic background, we refer the reader to Feng \cite{Feng}, Chapter 1, to Durrett \cite{Durrett}, Section 1.1, and to references therein.}
not previously seen in the population (this is called the infinite allelic model). If a sample of size $n$ is taken from such a population, the probability $p\left(a_1,\ldots,a_n;\theta\right)$ that
the sample contains $a_1$ alleles represented once, $a_2$ alleles represented twice, and so on is given by the Ewens sampling formula
\begin{equation}\label{ESF}
p\left(a_1,\ldots,a_n;\theta\right)=\frac{n!}{(\theta)_n}\prod\limits_{j=1}^n
\left(\frac{\theta}{j}\right)^{a_j}\frac{1}{a_j!}.
\end{equation}
Here, $\theta>0$  is a parameter
characterizing the mutation rate,
$(\theta)_n=\theta(\theta+1)\ldots(\theta+n-1)$ is the Pochhammer
symbol, and $(a_1,\ldots,a_n)$ is an allelic partition satisfying
$\sum_{j=1}^nja_j=n$.
For details of such a derivation of the Ewens sampling formula, we refer the reader to the book by Feng \cite{Feng}, Chapters 2 and 4,
 to the book by Durrett \cite{Durrett}, Section 1.3, and to the book by Etheridge \cite{Etheridge}, Chapter 2.
For a relation of the Ewens sampling formula to different topics
of mathematics, and for different recent developments around this formula, see the articles by Tavar$\acute{\mbox{e}}$ \cite {Tavare}, by Crane \cite{Crane}, and by Griffiths and Lessard \cite{GriffithsLessard}.

\subsection{Description of the model, and the main result}
\subsubsection{Motivation}
In the derivation of the Ewens sampling formula, all mutations are assumed to occur with the same probability $\mu$. In other words, all possible alleles belong to the same class defined by $\mu$. However, it is not hard to imagine a situation where\textit{ all possible alleles can be classified into a finite number of distinct classes}.  Then we assume that the number of such classes is $k$,  and mutations to alleles from the $l$th class, $l\in \{1,\ldots,k\}$, take place with the mutation probability $\mu_l$  associated with this
specific class. Each $\mu_l$, $l=1,\ldots, k$,
is the probability for an individual to mutate into a new allele of class $l$,
$l\in\left\{1,\ldots,k\right\}$.

An example motivating the consideration of infinitely many allelic models
in which the set of all possible alleles is separated into a finite number of distinct classes arises in the study of
Cystic Fibrosis (CF). This is a disease that affects approximately 28,000 patients
in the United States and approximately 36000 patients in Europe, see
O'Reilly and Elphick \cite{ReillyElphick}, and references therein.
Cystic fibrosis is known to be caused by mutations of a specific gene.
This gene identified in 1989 encodes the CF transmembrane conductance regulator
(CFTR) protein and has more than 1900 mutations with the potential to cause
disease. Mutations can be classified into six different classes, according to their
consequences on CFTR properties; see Table \ref{tab1}. Each class
contains hundreds of different alleles and is characterized by its own mutation rate.

\begin{table}
    \centering
    \begin{tabular}{cc}
     \textbf{Class}    &  \textbf{Consequence of mutation}\\
     \\
     \hline
     \\
      1   & No CFTR synthesis \\
      \\
      2   & Failure of CFTR to traffic to the cell surface \\
      \\
      3   &  Failure of CFTR to open at cell surface\\
      \\
      4   &  Decreased time CFTR channel is open at cell surface\\
      \\
      5   &  Decreased amount of CFTR at cell surface\\
      \\
      6   &  Unstable CFTR with decreased half-line at the cell surface\\
      \\
      \hline
    \end{tabular}
    $$
    $$
    \caption{ Six different classes of CFTR mutations. The table is taken from O'Reilly and Elphick \cite{ReillyElphick}.}
    \label{tab1}
\end{table}

\subsubsection{The model}\label{SectionModelDefinition}
The model under considerations is a Wright-Fisher-type model with infinitely many allelic types
and mutations.  We consider a population
of $2N$ genes whose size is preserved in each generation $t$, $t\in\left\{0,1,2,\ldots\right\}$. Time $t=0$ refers to the starting generation,
and $t=m$ corresponds to the generation $m$. The population evolves under the influence
of mutation and random sampling with replacement (as in the usual Wright-Fisher models with mutations; see, for example, Ewens \cite{EwensLectures}). Each mutation gives rise to a new allele previously not seen in the population, and all
possible alleles form a countably infinite set $\mathcal{A}$. In addition, in contrast to the usual Wright-Fisher infinitely many allele model described, for example,
in Ewens \cite{EwensLectures}, Etheridge \cite{Etheridge}, Feng \cite{Feng}
we assume that $\mathcal{A}$ is a union of $k$ countably infinite non-intersecting
subsets $\mathcal{A}^{(l)}$, i.e. $\mathcal{A}=\cup_{l=1}^k\mathcal{A}^{(l)}$.
Alleles of class $l$ are the elements of $\mathcal{A}^{(l)}$, and the mutation of each existing gene to each allele of class $\mathcal{A}^{(l)}$ is possible
with the probability of mutation $\mu_l$.

The reproduction and mutation mechanisms are defined as follows.
In each generation $t=1,2,\ldots$, each gene randomly selects a parent from the previous generation uniformly. The gene inherits the allelic type of the parent
with probability $1-\sum_{l=1}^k\mu_l$, or mutates to a new allelic type
in the class $\mathcal{A}^{(l)}$ with probability $\mu_l$.

We assume that the process described above has been going on for such a long time that it reaches statistical equilibrium.
\subsubsection{Main result}
Suppose we examine a sample of $n$ gametes taken from such a population. We will be interested in the matrix
\begin{equation}\label{Data}
\left(
\begin{array}{ccc}
  a_1^{(1)} & \ldots & a_1^{(k)} \\
  \vdots &  &  \\
  a_n^{(1)} & \ldots & a_n^{(k)}
\end{array}
\right),\;\;\sum\limits_{l=1}^k\sum\limits_{j=1}^nja_j^{(l)}=n,
\end{equation}
where $a_j^{(l)}$ is\textit{ the number of alleles of class $l$ represented $j$ times in the sample}. Such a measurement would be possible if the experimental techniques permit labeling of the alleles we can distinguish according to their classes only.  We say that matrix (\ref{Data}) describes the allelic composition of a sample.

If a random sample of size $n$
is taken from the population, the corresponding data is a \textit{random matrix}
$A_N(n)$ of the same format as (\ref{Data}), i.e.
$$
A_N(n)=
\left(
\begin{array}{ccc}
  A_{N,1}^{(1)}(n) & \ldots & A_{N,1}^{(k)}(n) \\
  \vdots &  &  \\
  A_{N,n}^{(1)}(n) & \ldots & A_{N,n}^{(k)}(n)
\end{array}
\right),\;\;\sum\limits_{l=1}^k\sum\limits_{j=1}^njA_{N,j}^{(l)}(n)=n,
$$
where $A_{N,j}^{(l)}(n)$ is a random variable characterizing the number of alleles of class $l$ represented $j$ times in the sample of size $n$.
The present paper initiates a study of such random matrices, and our result
is the following theorem.
\begin{thm}\label{MAINTHEOREM}
Assume that $\mu_1=\frac{\theta_1}{4N}$, $\ldots$, $\mu_k=\frac{\theta_k}{4N}$, where $\theta_1>0$, $\ldots$, $\theta_k>0$ are fixed numbers
and $2N$ is the size of the population.
As $N\longrightarrow\infty$, the random variable $A_{N,j}^{(l)}(n)$ converges
in distribution to a random variable $A_{j}^{(l)}(n)$ (for each $j\in\left\{1,\ldots,n\right\}$ and each $l\in\{1,\ldots,k\}$). The joint probability distribution
of $A_j^{(l)}(n)$ is given by
\begin{equation}\label{RefinedESF}
\begin{split}
&\Prob\left(A_j^{(l)}(n)=a_j^{(l)};\; l=1,\ldots,k;\;j=1,\ldots,n\right)\\
&=
\mathbf{1}\left(\sum\limits_{l=1}^k\sum\limits_{j=1}^nja_j^{(l)}=n\right)
\frac{n!}{\left(\theta_1+\ldots+\theta_k\right)_n}\prod\limits_{l=1}^k\prod\limits_{j=1}^n
\left(\frac{\theta_l}{j}\right)^{a_j^{(l)}}\frac{1}{a_j^{(l)}!},
\end{split}
\end{equation}
where $\{a_j^{(l)}:\;l=1,\ldots,k;\; j=1,\ldots,n \}$ are non-negative integers, and $(\theta)_n=\theta (\theta+1)\ldots(\theta+n-1)$ is the Pochhammer symbol.
\end{thm}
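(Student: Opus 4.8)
The plan is to derive the refined Ewens sampling formula by the same route that produces the classical one, treating the $k$ mutation classes as a bookkeeping refinement at every step. The key observation is that the classical derivation has two ingredients: first, for fixed sample size $n$, the sampling formula is the stationary distribution of a Markov chain (the P\'olya-like urn / Hoppe urn description of the Wright-Fisher genealogy), obtained by letting $N\to\infty$ so that the number of mutation events in the coalescent is governed by a constant rate; second, the exchangeability of the sample lets one pass from ordered configurations to allelic partitions by a straightforward combinatorial count. In our setting the only change is that each mutation event, when it occurs, is independently assigned a class $l\in\{1,\ldots,k\}$ with probability $\mu_l/(\sum_m \mu_m)\to \theta_l/(\theta_1+\cdots+\theta_k)$, so the total mutation parameter $\theta:=\theta_1+\cdots+\theta_k$ controls \emph{when} mutations happen and the ratios $\theta_l/\theta$ control \emph{which class} each new allele belongs to.

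First I would set up the Hoppe urn with total weight $\theta$ but now with $k$ colors of "mutant" balls, color $l$ carrying weight $\theta_l$. Running this urn for $n$ steps and recording, for each step, whether the drawn ball is a copy of an existing ball or a fresh mutant of class $l$, one builds up a sample whose allelic type is an ordered sequence; the probability of any particular outcome telescopes into $\frac{1}{(\theta)_n}$ times a product of factors $\theta_l$ (one for each distinct allele of class $l$ born) and factors accounting for the repeated draws within each allele block. This is the standard Hoppe-urn computation; the refinement only splits the single $\theta$ into $\theta_1,\ldots,\theta_k$ according to which class each newly created allele lies in. Alternatively, one can invoke the coalescent limit directly: as $N\to\infty$ the genealogy of the $n$ sampled genes converges to Kingman's coalescent, mutations fall on the branches as a Poisson process of rate $\theta/2$ per unit length, and each mutation is marked by an independent class label with probabilities $\theta_l/\theta$; then one reads off the allelic composition from the labelled coalescent tree. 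Either route establishes that $A_{N,j}^{(l)}(n)\Rightarrow A_j^{(l)}(n)$ with the claimed limit.

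Next I would pass from the ordered/labelled sample to the matrix $(a_j^{(l)})$. Given the numbers $a_j^{(l)}$ with $\sum_{l,j} j a_j^{(l)}=n$, the number of ordered samples of size $n$ (i.e.\ ways to arrange the genes in a row and to assign which allele-label each belongs to, up to relabelling of alleles within a class) that give rise to this matrix is the multinomial factor $\frac{n!}{\prod_{l,j}(j!)^{a_j^{(l)}}}$ times $\frac{1}{\prod_{l,j} a_j^{(l)}!}$ correcting for permutations of equinumerous allele blocks of the same class; each such ordered sample carries the same probability, namely $\frac{1}{(\theta)_n}\prod_{l,j}\theta_l^{a_j^{(l)}}(j-1)!^{a_j^{(l)}}$ from the Hoppe-urn telescoping. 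Multiplying and simplifying $\frac{(j-1)!}{j!}=\frac1j$ yields exactly
\begin{equation*}
\frac{n!}{(\theta_1+\cdots+\theta_k)_n}\prod_{l=1}^k\prod_{j=1}^n\left(\frac{\theta_l}{j}\right)^{a_j^{(l)}}\frac{1}{a_j^{(l)}!},
\end{equation*}
which is \eqref{RefinedESF}. The indicator $\mathbf{1}(\sum_{l,j} j a_j^{(l)}=n)$ is automatic since any sample of size $n$ has this property.

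I expect the main obstacle to be the rigorous justification of the limit transition $N\to\infty$, i.e.\ showing that the finite-$N$ Wright-Fisher chain with $k$-class mutations, observed at stationarity and sampled, actually converges in distribution to the Hoppe-urn / coalescent description, with proper control of the error from multiple mutations hitting the same lineage or from coincident coalescences at the $O(1/N)$ scale. In the classical case this is a known and somewhat delicate computation (it is essentially the content of the cited books of Ewens, Etheridge, and Feng); here one must check that the class labels do not disturb it, which they do not because the labelling is a separate independent layer on top of the genealogy. A cleaner alternative that sidesteps some of this analysis is to prove \eqref{RefinedESF} directly as the stationary distribution of the limiting $n$-sample Markov chain (adding one gene at a time with the Hoppe dynamics), verifying stationarity by a one-step balance computation; the combinatorial identity needed for that balance is again the $k$-color refinement of the one used for \eqref{ESF}. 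I would present the Hoppe-urn argument as the primary proof and remark on the coalescent picture as the conceptual explanation.
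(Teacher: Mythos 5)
Your proposal follows essentially the same route as the paper: the Wright--Fisher genealogy in the limit $N\to\infty$ becomes a coalescent with killing at class-dependent rates, which is then simulated by a generalized Hoppe urn with $k$ black objects of masses $\theta_1,\ldots,\theta_k$, and the refined formula is read off from the urn. The only difference is that you compute the urn's marginal law by a direct count over class-labelled set partitions (which is in fact the computation the paper carries out separately in its section on random set partitions), whereas the paper's main proof verifies the formula for the urn by induction on $n$ via a one-step recursion; both are standard and correct.
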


\begin{Remarks}
a) If we consider the example that arises in the study of Cystic Fibrosis (see  Table \ref{tab1}), then $k=6$. Assume that the mutations of the first class (without CFTR synthesis) are characterized
by the mutation parameter $\theta_1$, the mutations of the second class (with failure of CFTR
to traffic to the cell surface) are characterized by the mutation parameter $\theta_2$, and so on.
Then the probability that all genes in a sample of size $n$ are of the same allele, and the mutation affects the biosynthesis of CFTR (i.e. the allele is of the first class) is
$$
\frac{(n-1)!\theta_1}{\left(\theta_1+\theta_2+\theta_3+\theta_4+\theta_5+\theta_6\right)_n},
$$
as follows immediately from formula (\ref{RefinedESF}).

b) For $k=1$ (which means that there is no distinction between alleles from different classes), formula (\ref{RefinedESF}) turns into the Ewens sampling formula, and we refer to (\ref{RefinedESF}) as to the \textit{refined Ewens sampling formula}.

c) Note that different extensions of the Ewens sampling formula (equation (\ref{ESF})) are known.
In particular, the Pitman sampling formula (which first appeared in Pitman \cite{Pitman1} and was
further studied in Pitman \cite{Pitman2}) can be understood as a two-parameter generalization of
formula (\ref{ESF}). The paper by Griffiths and Lessard \cite{GriffithsLessard} extends the Ewens sampling formula and the related Poisson-Dirichlet distribution to a model where the population size may vary over time. Another family of parametric sampling formulas is introduced in Gnedin \cite{Gnedin}. However, a generalization of the Ewens sampling formula to a model where the set of all possible allelic types is partitioned into $k$ infinite countable classes is not available in the existing literature, and formula (\ref{RefinedESF}) is a new result,
to the best of my knowledge.
\end{Remarks}
\subsection{Organization of the paper, and summary of results}
The paper is organized as follows. In Section \ref{SectionRESFMPS}
we prove that equation (\ref{RefinedESF}) indeed defines a probability distribution on the set of non-negative integers; see Proposition
\ref{PropositionIndeedDistribution}. To prove Proposition \ref{PropositionIndeedDistribution} we interpret each matrix (\ref{Data}) as a multiple partition $\Lambda_n^{(k)}$  of $n$
into $k$ components. Then we show that equation (\ref{RefinedESF})  defines a probability measure
$M_{\theta_1,\ldots,\theta_k; n}^{\Ewens}$ on the set $\Y_n^{(k)}$
of all such multiple partitions $\Lambda_n^{(k)}$. Also, we show that
expressions (\ref{RefinedESF}) are consistent for different values of $n$: if from a sample of size $n$ whose allelic composition has distribution (\ref{RefinedESF}) a random sub-sample of size $n-1$ is
taken without replacement, then the distribution of the allelic composition of the sub-sample is given by (\ref{RefinedESF})
with $n$ replaced by $n-1$. This property of (\ref{RefinedESF})
can be equivalently reformulated as a condition on the system
$\left(M_{\theta_1,\ldots,\theta_k;n}^{\Ewens}\right)_{n=1}^{\infty}$.
Namely, $\left(M_{\theta_1,\ldots,\theta_k;n}^{\Ewens}\right)_{n=1}^{\infty}$
should be a \textit{multiple partition structure}, see Definition \ref{DefinitionMultiplePartitionStructure}. We prove this property
of $\left(M_{\theta_1,\ldots,\theta_k;n}^{\Ewens}\right)_{n=1}^{\infty}$
in Proposition \ref{PropositionMEWENSMPS}, and thus the consistency
of (\ref{RefinedESF}) for different values of $n$ follows.

In this paper we derive the refined Ewens sampling formula as that for the stationary distribution (in the infinite population limit)
of a Wright-Fisher type model with mutations. Each mutation leads to a new allele not previously seen in the population (as in the usual infinite-allele model). The new assumption here is that all possible alleles are divided into $k$ different classes, and each class is characterized by its own mutation rate. We present two approaches to the refined Ewens sampling formula.
Our first approach (see Section \ref{SectionProof}) works backward in time: we observe that the model under considerations leads to the genealogical process which is the coalescent with killing, the killing is characterized by $k$ different rates (depending on the allelic class).
In the infinite population limit, the genealogical process is associated with a \textit{generalization of the Hoppe urn model} \cite{Hoppe1,Hoppe2}. The relation with
the generalized Hoppe urn model enables us to compute the stationary distribution, and thus to derive the refined Ewens sampling formula,
equation (\ref{RefinedESF}). This proves Theorem \ref{MAINTHEOREM}.

The second derivation (see Section \ref{SectionDerivation1}) works forward in time:
we consider an infinite population divided into $k$ classes, where each class is a union of a countable number of sub-populations characterized by their own
allelic types. Let $x_j^{(l)}$ denote the proportion of the population characterized by the $j$th allelic type of class $l$. Assuming that
$x_j^{(l)}$ are fixed numbers, we compute the probability
$\mathbb{K}_n\left(\Lambda_n^{(k)};x^{(1)},\ldots,x^{(k)}\right)$ that the alleles presented in the sample are characterized by a multiple partition $\Lambda_n^{(k)}$. Then, we allow $x_j^{(l)}$ to be random.
Under natural assumptions, we obtain that the joint distribution of $x_j^{(l)}$ is the \textit{multiple Poisson-Dirichlet distribution}
$PD\left(\theta_1,\ldots,\theta_k\right)$.
Then the probability that the alleles presented in the sample are characterized by $\Lambda_n^{(k)}$ is obtained by integration of
$\mathbb{K}_n\left(\Lambda_n^{(k)};x^{(1)},\ldots,x^{(k)}\right)$
over certain set $\overline{\nabla}_0^{(k)}$, and $PD\left(\theta_1,\ldots,\theta_k\right)$ is the integration measure
on this set. The result of this integration is the value of
$M_{\theta_1,\ldots,\theta_k; n}^{\Ewens}$ at $\Lambda_n^{(k)}$, which is equivalent to the refined Ewens sampling formula; see Proposition
\ref{PropositionMEwenSASIntegral}.

The interpretation of $\theta_1$, $\ldots$, $\theta_k$ as parameters describing mutations in alleles of different classes dictates a consistency condition on
$M_{\theta_1,\ldots,\theta_k;n}^{\Ewens}$.  Assume that $M_{\theta_1,\ldots,\theta_k;n}^{\Ewens}$ defines the distribution of a random variable describing the allelic composition of a sample of size $n$.
If experimental techniques do not enable distinguishing between $k$ different classes of alleles, then the allelic composition of a sample
should be described by a random Young diagram with $n$ boxes distributed according to the usual Ewens formula with the parameter $\theta_1+\ldots+\theta_k$. This is shown explicitly in Section \ref{SectionConsistency}.

In Section \ref{SectionChineseMain}, we relate the refined Ewens sampling formula with a family of probability distributions on wreath products. Namely,
in Section \ref{SectionChinese} we show that the refined Ewens sampling formula can be understood as a probability distribution
$P_{t_1,\ldots,t_k;n}^{\Ewens}$ on a wreath product
$G\sim S(n)$ of a finite group $G$ with the symmetric group $S(n)$, see Proposition \ref{PropositionCorMeasures}. For wreath products $G\sim S(n)$, we construct an \textit{ analog of the Chinese
restaurant process} \cite{Aldous} that generates $P_{t_1,\ldots,t_k;n}^{\Ewens}$, see Section \ref{SectionChinese2}.

In Section \ref{SectionRandomSetPartitions}
we discuss random set partitions associated with
the refined Ewens Sampling Formula model.
In Sections \ref{SectionNumbersAllelicTypes} and
\ref{SectionRepresentationPoisson} we present several applications of the refined Ewens sampling formula.
In Section \ref{SectionNumbersAllelicTypes} we study the numbers $K_n^{(l)}$ of alleles of class $l$
in a random sample of size $n$. These numbers can be experimentally observed and, therefore, $K_n^{(l)}$
are important probabilistic quantities of interest. In the context of the example arising in the study of Cystic Fibrosis  $K_n^{(1)}$ represents the number of mutations in a random sample of size
$n$ that affect biosynthesis, $K_n^{(2)}$ is the number of mutations that affect the traffic to the cell surface, and so on.
We obtain exact formulas for the expectation of $K_n^{(l)}$,
for the variance of $K_n^{(l)}$, and for the joint distribution of $K_n^{(1)}$, $\ldots$, $K_n^{(k)}$. Then we investigate the asymptotic properties of these random variables as $n\rightarrow\infty$, in the situation
where the mutation parameters increase together
with the sample size $n$.
Our motivation to consider the regimes where both $n$ and the mutation parameters tend to infinity comes from the  following observations. First, it is very common to have a large sample size in actual experiments. In fact, Tavar$\acute{\mbox{e}}$ \cite{Tavare} presents two data sets describing sample sizes and allele frequencies
for fruit fly species \textit{Drosophila tropicalis} and \textit{Drosophila simulans}. The sample size for Drosophila tropicalis is equal to $298$, and for Drosophila simulans it is equal to $308$.
 In studies to detect causal genes of complex human diseases, the required sample size varies from several hundreds to several thousands, see, for example, Hong and Park \cite{HongPark}.
Second, the mutation parameters $\theta_1$, $\ldots$, $\theta_k$ are proportional to the population size $2N$, so the large mutation parameters $\theta_1$, \ldots, $\theta_k$
correspond to the large population size $2N$ with fixed $\mu_1$, $\ldots$, $\mu_k$. If, for example,
the parameter $n$ is not quite larger than $\theta_1$, $\ldots$, $\theta_k$, the asymptotic regime where
only $n$ tends to infinity does not provide a good approximation. This issue is addressed
by considering the case where both the sample size $n$ and the mutation parameters $\theta_1$,
$\ldots$, $\theta_k$ tend to infinity.
In this paper, we discuss the convergence
in probability and show asymptotic normality of the random variables $K_n^{(l)}$ in different asymptotic regimes; see Theorem \ref{TheoremKconstant}, Theorem \ref{TheoremConvergenceInProbability}, and
Theorem \ref{Theorem12.8}. In the context of the classical Ewens formula,
similar asymptotic regimes were investigated in Feng \cite{FengL} (see Section 4),  and in Tsukuda \cite{Tsukuda, Tsukuda1}. Similar results are available in the literature for the number of distinct values in a random sample from the Dirichlet process; see Antoniak \cite{Antoniak}, Korwar and Hollander \cite{KorwarHollander}, and the book by Ghosal and van der Vaart \cite{GhosalvanderVaart},
Section 4.1.5 for a detailed account and references.

As mentioned above, a random sample can be described by a random matrix $A(n)$ whose entries are random variables with joint distribution (\ref{RefinedESF}). In Section \ref{SectionRepresentationPoisson} we show that as $n\rightarrow\infty$ the matrix $A(n)$ can be approximated by a random matrix whose entries are independent Poisson random variables; see Theorem
\ref{TheoremConvergencePoisson}. Theorem \ref{TheoremConvergencePoisson} is an extension
of the well-known result by Arratia, Barbour, and Tavar$\acute{\mbox{e}}$ \cite{ArratiaBarbourTavarePoisson}.

\textbf{Acknowledgements.} I thank Natalia Strahov for bringing to my attention the paper by O'Reilly and Elphick \cite{ReillyElphick}. I am very grateful to A. Gnedin for discussion.
%%%%%%%%%%%%%%%%%%%%%%%%%%%%%%%%%%%%%%%%%%%%%%%%%%%%%%%%%%%%%%%%%%%%%%%%%%%%%%%%%%%%%%%%%%%%%%%%%%%%%%%%%%%%%%%%%%%%%%%%%%%%%%%%%%%%%%%%%%%%%%%%%%%%%%%%%%%%%%%%%%%%%%%%%%%%%%%%%%%%%%%%%%%%%%%%%%%%%%%%%%%%%%%%%%%%%%%%%%%%%%%%%%%%%%%%%%%%%%%%%%%%%%%%%%%%
\section{The refined Ewens sampling formula and multiple partition structures}\label{SectionRESFMPS}
It is known that the Ewens sampling formula (equation (\ref{ESF})) can be understood as a distribution on the set $\Y_n$
of Young diagrams with $n$ boxes (or, alternatively, on the set of partitions of $n$). In fact, interpret
$a_1$ in equation (\ref{ESF}) as the number of rows with one box in a Young diagram $\lambda$, $a_2$ as the number
of rows with two boxes in $\lambda$, and so on. We obtain a correspondence
$$
\left(a_1,\ldots,a_n\right)\longrightarrow \lambda=\left(1^{a_1}2^{a_2}\ldots n^{a_n}\right),
$$
and condition $\sum_{j=1}^nja_j=n$ ensures $\lambda\in\Y_n$. Conversely, each Young diagram $\lambda$, $\lambda\in\Y_n$, gives rise (by the same rule) to a list $(a_1,\ldots,a_n)$ of positive integers such that  condition $\sum_{j=1}^nja_j=n$ is satisfied.

In the following, we explain that formula (\ref{RefinedESF}) can be understood as a distribution on the set $\Y_n^{(k)}$ of all multiple
partitions $\Lambda_n^{(k)}$ of $n$ into $k$ components. By a \textit{multiple partition of $n$ into $k$ components} we mean a family $\Lambda_n^{(k)}=\left(\lambda^{(1)},\ldots,\lambda^{(k)}\right)$ of Young diagrams
$\lambda^{(1)}$, $\ldots$, $\lambda^{(k)}$ such that the condition
\begin{equation}\label{normRESF1}
|\lambda^{(1)}|+\ldots+|\lambda^{(k)}|=n
\end{equation}
is satisfied. Here, $|\lambda^{(l)}|$ denotes the number of boxes in the Young diagram $\lambda^{(l)}$. Each matrix (\ref{Data}) that describes the allelic composition of a sample of size $n$ can be uniquely identified with
a multiple partition $\Lambda_n^{(k)}$ of $n$ into $k$ components. The
correspondence is defined as follows
\begin{equation}\label{DataPartition}
\left(
\begin{array}{ccc}
  a_1^{(1)} & \ldots & a_1^{(k)} \\
  \vdots &  &  \\
  a_n^{(1)} & \ldots & a_n^{(k)}
\end{array}
\right)\longrightarrow \Lambda_n^{(k)}=\left(\lambda^{(1)},\ldots,\lambda^{(k)}\right),
\end{equation}
where
\begin{equation}\label{CorrespondenceResf}
\lambda^{(l)}=\left(1^{a_1^{(l)}}2^{a_2^{(l)}}\ldots n^{a_n^{(l)}}\right),\; l=1,\ldots,k.
\end{equation}
Equation (\ref{CorrespondenceResf}) says that exactly $a_j^{(l)}$ of the rows of $\lambda^{(l)}$ are equal to $j$. It is easy to see that (\ref{normRESF1}) is satisfied provided that
\begin{equation}\label{ConditionMatrix}
\sum\limits_{l=1}^{k}\sum\limits_{j=1}^nja_j^{(l)}=n.
\end{equation}
Thus, $\Lambda_n^{(k)}=\left(\lambda^{(1)},\ldots,\lambda^{(k)}\right)$ (with $\lambda^{(1)}$,
$\ldots$, $\lambda^{(k)}$ defined by equation (\ref{CorrespondenceResf})) is an element of $\Y_n^{(k)}$. Conversely, each $\Lambda_n^{(k)}$  gives rise
to a matrix (\ref{Data}) describing the allelic composition of a sample,
and we say that $\Lambda_n^{(k)}$ defines \textit{the allelic composition of a sample}.

Assume that each $\Lambda_n^{(k)}\in\Y_n^{(k)}$ is defined by equations (\ref{DataPartition}) and (\ref{CorrespondenceResf}) in terms of nonnegative integers $a_j^{(l)}$ such that condition (\ref{ConditionMatrix}) is satisfied. Equation (\ref{RefinedESF}) can be rewritten as
\begin{equation}\label{NEW2.5}
\Prob\left(A_j^{(l)}(n)=a_j^{(l)};\; l=1,\ldots,k;\; j=1,\ldots,n\right)
=M_{\theta_1,\ldots,\theta_k;n}^{\Ewens}(\Lambda_n^{(k)}),
\end{equation}
where
\begin{equation}\label{MEM}
M_{\theta_1,\ldots,\theta_k;n}^{\Ewens}(\Lambda_n^{(k)})=
\frac{n!}{\left(\theta_1+\ldots+\theta_k\right)_n}\prod\limits_{l=1}^k\prod\limits_{j=1}^n
\left(\frac{\theta_l}{j}\right)^{a_j^{(l)}}\frac{1}{a_j^{(l)}!}.
\end{equation}
To rewrite (\ref{RefinedESF}) as (\ref{NEW2.5})  use equations (\ref{DataPartition})-(\ref{ConditionMatrix}).
\begin{prop}\label{PropositionIndeedDistribution}
We have
$$
\sum\limits_{\Lambda_n^{(k)}\in\Y_n^{(k)}}M_{\theta_1,\ldots,\theta_k;n}^{\Ewens}(\Lambda_n^{(k)})=1.
$$
Thus equation (\ref{RefinedESF}) indeed defines a probability distribution on the set
$$
\left\{a_j^{(l)}:\;l=1,\ldots,k;\; j=1,\ldots,n\right\}
$$
of non-negative integers.
\end{prop}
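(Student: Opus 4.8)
The plan is to reduce Proposition~\ref{PropositionIndeedDistribution} to a single generating-function identity. By the correspondence (\ref{DataPartition})--(\ref{CorrespondenceResf}) between $\Y_n^{(k)}$ and arrays of non-negative integers $(a_j^{(l)})$ satisfying (\ref{ConditionMatrix}), and since rows indexed by $j>n$ are forced to vanish, summing (\ref{MEM}) over $\Y_n^{(k)}$ equals
\begin{equation*}
\frac{n!}{(\theta_1+\cdots+\theta_k)_n}\sum_{\substack{a_j^{(l)}\ge 0\\ \sum_{l,j} j a_j^{(l)}=n}}\ \prod_{l=1}^{k}\prod_{j\ge 1}\frac{(\theta_l/j)^{a_j^{(l)}}}{a_j^{(l)}!}.
\end{equation*}
Writing $\theta:=\theta_1+\cdots+\theta_k$, it therefore suffices to prove that the inner double sum equals $(\theta)_n/n!$.

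To this end I would pass to the formal power series $F(t):=\sum_{n\ge 0}t^n S_n$, where $S_n$ is that inner sum. Writing $t^n=\prod_{l,j}t^{j a_j^{(l)}}$ and interchanging the sum over $n$ with the product over $(l,j)$ — legitimate as an identity of formal power series, since every coefficient in sight is non-negative and each coefficient of $F$ is a finite sum — one obtains
\begin{equation*}
F(t)=\prod_{l=1}^{k}\prod_{j\ge 1}\sum_{a\ge 0}\frac{(\theta_l t^j/j)^a}{a!}=\prod_{l=1}^{k}\prod_{j\ge 1}\exp\!\left(\frac{\theta_l t^j}{j}\right)=\prod_{l=1}^{k}\exp\!\left(-\theta_l\log(1-t)\right)=(1-t)^{-\theta},
\end{equation*}
using $\sum_{j\ge 1}t^j/j=-\log(1-t)$ and $\theta_1+\cdots+\theta_k=\theta$. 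On the other hand, the binomial series gives $(1-t)^{-\theta}=\sum_{n\ge 0}\tfrac{(\theta)_n}{n!}t^n$. Comparing coefficients of $t^n$ yields $S_n=(\theta)_n/n!$, hence $\sum_{\Lambda_n^{(k)}}M_{\theta_1,\ldots,\theta_k;n}^{\Ewens}(\Lambda_n^{(k)})=1$; since every factor in (\ref{MEM}) is non-negative, (\ref{RefinedESF}) is a bona fide probability distribution.

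The only genuinely delicate point is the bookkeeping: justifying the rearrangement of the infinite sum and infinite product above, together with the reindexing that lets one treat $j$ as ranging over all positive integers. This is essentially the same combinatorial identity that underlies the classical Ewens sampling formula — indeed the $k=1$ specialization of the display is exactly the statement that (\ref{ESF}) sums to one — so one may instead avoid generating functions and argue by induction on $k$: split the constraint $\sum_{l,j} j a_j^{(l)}=n$ according to the weight $m:=\sum_j j a_j^{(k)}$ of the $k$-th component, apply the $k=1$ case to the $k$-th factor and the inductive hypothesis to the remaining $k-1$ factors, and collapse the resulting convolution using the Chu--Vandermonde identity $(\theta)_n=\sum_{m=0}^{n}\binom{n}{m}(\theta_k)_m(\theta_1+\cdots+\theta_{k-1})_{n-m}$. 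Either route is routine once the $k=1$ case is granted, so I do not anticipate a substantial obstacle.
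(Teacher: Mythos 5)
Your proposal is correct. Your fallback route is essentially the paper's own proof: the paper factors $M_{\theta_1,\ldots,\theta_k;n}^{\Ewens}$ as in (\ref{MultipleEwensMeasure}) into a product of one-parameter Ewens measures $M_{\theta_l;|\lambda^{(l)}|}^{\Ewens}$ weighted by $\frac{n!}{|\lambda^{(1)}|!\cdots|\lambda^{(k)}|!}\frac{(\theta_1)_{|\lambda^{(1)}|}\cdots(\theta_k)_{|\lambda^{(k)}|}}{(\theta_1+\cdots+\theta_k)_n}$, sums over each $\Y_{m_l}$ using the fact that the classical Ewens measure is a probability measure, and then collapses the sum over $m_1+\cdots+m_k=n$ via the $k$-fold Vandermonde identity (\ref{SummationHyper}) — exactly your decomposition by the weight of each component, with the multivariate identity used in one shot rather than iterated Chu--Vandermonde. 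Your primary route via the generating function $F(t)=(1-t)^{-\theta}$ is genuinely different and arguably cleaner: it does not presuppose the normalization of the classical Ewens sampling formula but proves it simultaneously for all $k$ (including $k=1$) from the elementary expansion $\sum_{j\ge1}t^j/j=-\log(1-t)$ and the binomial series; the price is the (routine, and correctly flagged) justification of rearranging a formal power series product, whereas the paper's argument is purely finite-combinatorial once the $k=1$ case is granted as known. Both are complete; no gap.
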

\begin{proof}
From (\ref{MEM}) we obtain that
\begin{equation}\label{MultipleEwensMeasure}
\begin{split}
&M_{\theta_1,\ldots,\theta_k;n}^{\Ewens}\left(\Lambda_n^{(k)}\right)
=\frac{\left(\theta_1\right)_{|\lambda^{(1)}|}\ldots \left(\theta_k\right)_{|\lambda^{(k)}|
}}{\left(\theta_1+\ldots+\theta_k\right)_n}\\
&\times\frac{n!}{|\lambda^{(1)}|!\ldots |\lambda^{(k)}|!}\;
M_{\theta_1;|\lambda^{(1)}|}^{\Ewens}\left(\lambda^{(1)}\right)
\ldots M_{\theta_k;|\lambda^{(k)}|}^{\Ewens}\left(\lambda^{(k)}\right),\;\;
\Lambda_n^{(k)}=\left(\lambda^{(1)},\ldots,\lambda^{(k)}\right).
\end{split}
\end{equation}
Here
\begin{equation}\label{EwensMeasure}
M_{\theta; n}^{\Ewens}\left(\lambda\right)=\frac{n!}{\prod_{j=1}^{n}j^{m_j(\lambda)}m_j(\lambda)!}\frac{\theta^{l(\lambda)}}{(\theta)_n},
\end{equation}
with $\lambda=\left(1^{m_1(\lambda)}2^{m_2(\lambda)}\ldots\right)$ is the Ewens probability measure on $\Y_n$, the set of all Young diagrams with $n$ boxes. In equation (\ref{EwensMeasure})
$l(\lambda)$ denotes the number of rows in $\lambda$.

Next we use the formula
\begin{equation}\label{SummationHyper}
n!\sum\limits_{m_1+\ldots+m_k=n}\frac{(\theta_1)_{m_1}\ldots(\theta_k)_{m_k}}{m_1!\ldots m_k!}=\left(\theta_1+\ldots+\theta_k\right)_n,
\end{equation}
and the fact that each $M_{\theta_l,m_l}^{\Ewens}$ is a probability measure on $\Y_{m_l}$.
\end{proof}

A  \textit{random multiple partition of $n$ into $k$ components} is a random variable $\mathfrak{L}_n^{(k)}$
taking values in the set $\Y_n^{(k)}$. If a sample of $n$ gametes is chosen at random from a population, then the allelic composition of a sample is described by the random variable $\mathfrak{L}_n^{(k)}$.
\begin{defn}\label{DefinitionMultiplePartitionStructure} A multiple partition structure is a sequence $\mathcal{M}_1^{(k)}$, $\mathcal{M}^{(k)}_2$, $\ldots$ of distributions for $\mathfrak{L}_1^{(k)}$, $\mathfrak{L}_2^{(k)}$,
$\ldots$ which is consistent in the following sense: if the allelic composition of a sample of $n$ gametes is $\mathfrak{L}_n^{(k)}$, and a random sub-sample of size $n-1$ is taken independently of
$\mathfrak{L}_n^{(k)}$, then the allelic composition $\mathfrak{L}_{n-1}^{(k)}$ is distributed according to
$\mathcal{M}_{n-1}^{(k)}$.
\end{defn}
Multiple partition structures were first introduced in Strahov
\cite{StrahovMPS}. Multiple partition structures can be viewed as
generalizations of Kingman partition structures \cite{Kingman1, Kingman2}.
\begin{prop}\label{PropositionMEWENSMPS}The system $\left(M_{\theta_1,\ldots,\theta_k;n}^{\Ewens}\right)_{n=1}^{\infty}$ is a multiple partition structure.
\end{prop}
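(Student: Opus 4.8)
The plan is to verify the consistency condition of Definition~\ref{DefinitionMultiplePartitionStructure} by exploiting the product decomposition~(\ref{MultipleEwensMeasure}). Write $\Theta=\theta_1+\ldots+\theta_k$. The structural observation is that the operation ``delete a uniformly chosen gamete'' applied to a multiple partition $\Lambda_n^{(k)}=(\lambda^{(1)},\ldots,\lambda^{(k)})$ commutes with that decomposition: the $|\lambda^{(l)}|$ gametes carrying allelic types of class $l$ are exactly those recorded by $\lambda^{(l)}$, so the deleted gamete belongs to class $l$ with probability $|\lambda^{(l)}|/n$, and, conditionally on that, it is uniform among those gametes, i.e.\ it performs Kingman's down-sampling of $\lambda^{(l)}$ and leaves the other components intact. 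Thus the operation simultaneously (i)~decrements the size vector $(|\lambda^{(1)}|,\ldots,|\lambda^{(k)}|)$ in coordinate $l$ with probability $|\lambda^{(l)}|/n$, and (ii)~Kingman-down-samples the component $\lambda^{(l)}$.

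With this in hand I would invoke two classical consistency facts. (a)~Each Ewens measure $M_{\theta;m}^{\Ewens}$ of~(\ref{EwensMeasure}) is a Kingman partition structure: deleting a uniform element from an $M_{\theta;m}^{\Ewens}$-distributed partition yields $M_{\theta;m-1}^{\Ewens}$. (b)~Under~(\ref{MultipleEwensMeasure}) the size vector $(|\lambda^{(1)}|,\ldots,|\lambda^{(k)}|)$ has the P\'olya (Dirichlet--multinomial) law $\frac{n!}{m_1!\ldots m_k!}\,\frac{(\theta_1)_{m_1}\ldots(\theta_k)_{m_k}}{(\Theta)_n}$ on compositions $m_1+\ldots+m_k=n$, which is precisely the summand in~(\ref{SummationHyper}); since the underlying sequence of $n$ draws is exchangeable, this law is consistent under deleting a uniform draw, i.e.\ under decrementing coordinate $l$ with probability $m_l/n$, the result being the same law with $n$ replaced by $n-1$. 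Given (a), (b), and the conditional independence of the components in~(\ref{MultipleEwensMeasure}), the down-sampling of the previous paragraph carries $M_{\theta_1,\ldots,\theta_k;n}^{\Ewens}$ into the measure built from~(\ref{MultipleEwensMeasure}) with $n$ replaced by $n-1$, namely $M_{\theta_1,\ldots,\theta_k;n-1}^{\Ewens}$; this is exactly the required consistency, so $\bigl(M_{\theta_1,\ldots,\theta_k;n}^{\Ewens}\bigr)_{n\ge1}$ is a multiple partition structure.

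As a self-contained alternative I would give the direct computation. Fix $\Lambda_{n-1}^{(k)}=(\nu^{(1)},\ldots,\nu^{(k)})$ with row-count matrix $(b_r^{(l)})$. Every $\Lambda_n^{(k)}$ that down-samples to $\Lambda_{n-1}^{(k)}$ arises, for a unique pair $(l,s)$ with $s\ge1$, by converting one row of size $s-1$ of $\nu^{(l)}$ into a row of size $s$ (with $s=1$ meaning ``adjoin a new row of size $1$''), and the probability of the corresponding deletion is $s(b_s^{(l)}+1)/n$, the number of gametes lying in size-$s$ rows of class $l$ divided by $n$. Using~(\ref{MEM}) one checks that $M_{\theta_1,\ldots,\theta_k;n}^{\Ewens}$ of this $\Lambda_n^{(k)}$, multiplied by $s(b_s^{(l)}+1)/n$, equals $M_{\theta_1,\ldots,\theta_k;n-1}^{\Ewens}(\Lambda_{n-1}^{(k)})$ times $\theta_l/(\Theta+n-1)$ when $s=1$, and times $(s-1)b_{s-1}^{(l)}/(\Theta+n-1)$ when $s\ge2$. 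Summing over $s$ telescopes to the weight $\bigl(\theta_l+\sum_{r\ge1}rb_r^{(l)}\bigr)/(\Theta+n-1)=(\theta_l+|\nu^{(l)}|)/(\Theta+n-1)$, and summing over $l$ gives $(\Theta+(n-1))/(\Theta+n-1)=1$; hence the down-sampled weight of $\Lambda_{n-1}^{(k)}$ is $M_{\theta_1,\ldots,\theta_k;n-1}^{\Ewens}(\Lambda_{n-1}^{(k)})$.

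The main obstacle is the bookkeeping behind steps (i)--(ii): one must argue carefully that deleting a uniformly random gamete from the pooled sample really is the composite operation ``pick a component with probability proportional to its size, then Kingman-down-sample it,'' so that the down-sampling respects the factorization~(\ref{MultipleEwensMeasure}); once that is granted the statement reduces to the two standard consistency properties (a) and (b) already implicit in the proof of Proposition~\ref{PropositionIndeedDistribution}. The direct route trades this conceptual point for the elementary identity above, whose only delicate features are the gamete count $s(b_s^{(l)}+1)$ and the separate treatment of the $s=1$ case.
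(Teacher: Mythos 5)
Your proposal is correct and follows essentially the same route as the paper: the paper likewise reduces the consistency condition to the transition probabilities $\frac{1}{n}m_{L}(\lambda^{(l)})L$ for removing a box from a component chosen in proportion to its gamete count, verifies the single-class Kingman consistency of $M^{\Ewens}_{\theta;n}$, and then combines this with the factorization (\ref{MultipleEwensMeasure}) ``by direct calculations.'' Your second, self-contained computation (the ratio identities and the telescoping sum to $(\theta_l+|\nu^{(l)}|)/(\Theta+n-1)$, summing to $1$ over $l$) is exactly the direct calculation the paper leaves implicit, and it checks out.
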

\begin{proof} It was proved in Strahov \cite{StrahovMPS}, Section 8.1
that a sequence $\mathcal{M}_1^{(k)}$, $\mathcal{M}^{(k)}_2$, $\ldots$ of distributions for $\mathfrak{L}_1^{(k)}$, $\mathfrak{L}_2^{(k)}$,
$\ldots$
 is a multiple partition structure if and only if the consistency condition
\begin{equation}\label{Transition}
\mathcal{M}_{n-1}^{(k)}\left(\Lambda_{n-1}^{(k)}\right)=\sum\limits_{\Lambda_n^{(k)}\in\Y_n^{(k)}}
\Prob\left(\mathfrak{L}_{n-1}^{(k)}=\Lambda_{n-1}^{(k)}\biggl\vert\mathfrak{L}_{n}^{(k)}=\Lambda_n^{(k)}\right)\mathcal{M}_n^{(k)}\left(\Lambda_n^{(k)}\right),
\end{equation}
is satisfied for each $\Lambda_{n-1}^{(k)}$, $\Lambda_{n-1}^{(k)}\in\Y_{n-1}^{(k)}$.
Here the transition probability
to get
$\Lambda_{n-1}^{(k)}=\left(\mu^{(1)},\ldots,\mu^{(k)}\right)$ from $\Lambda_n^{(k)}=\left(\lambda^{(1)},\ldots,\lambda^{(k)}\right)$
is defined by
\begin{equation}\label{LAMBDALAMBDA}
\Prob\left(\mathfrak{L}_{n-1}^{(k)}=\Lambda_{n-1}^{(k)}\biggl\vert\mathfrak{L}_{n}^{(k)}=\Lambda_n^{(k)}\right)
=\left\{
  \begin{array}{ll}
    \frac{1}{n}m_{L^{(1)}}(\lambda^{(1)})L^{(1)}, & \lambda^{(1)}\searrow\mu^{(1)}, \\
     \frac{1}{n}m_{L^{(2)}}(\lambda^{(2)})L^{(2)}, & \lambda^{(2)}\searrow\mu^{(2)}, \\
    \vdots, &  \\
     \frac{1}{n}m_{L^{(k)}}(\lambda^{(k)})L^{(k)}, & \lambda^{(k)}\searrow\mu^{(k)}, \\
    0, & \mbox{otherwise},
  \end{array}
\right.
\end{equation}
where the notation $\lambda^{(l)}\searrow\mu^{(l)}$ means that $\mu^{(l)}$ is obtained from $\lambda^{(l)}$ by extracting a box, $L^{(l)}$ is the length of the row in $\lambda^{(l)}$
from which a box is removed to get $\mu^{(l)}$, and $m_{L^{(l)}}\left(\lambda^{(l)}\right)$ is the number of rows of size $L^{(l)}$ in $\lambda^{(l)}$.

In order to see that the system $\left(M_{\theta_1,\ldots,\theta_k;n}^{\Ewens}\right)_{n=1}^{\infty}$ is a multiple partition structure we first check directly using (\ref{EwensMeasure}) that
\begin{equation}\label{EwensZ}
M_{\theta,n-1}^{\Ewens}(\mu)=\sum\limits_{\lambda\searrow\mu}
\left(\frac{1}{n}m_L(\lambda)L\right)M_{\theta,n}^{\Ewens}(\lambda).
\end{equation}
The fact that the system $\left(M_{\theta_1,\ldots,\theta_k;n}^{\Ewens}\right)_{n=1}^{\infty}$ satisfies (\ref{Transition}) follows from equations (\ref{MultipleEwensMeasure}), (\ref{LAMBDALAMBDA})
and (\ref{EwensZ}) by direct calculations.
\end{proof}
Thus, $\left(M_{\theta_1,\ldots,\theta_k;n}^{\Ewens}\right)_{n=1}^{\infty}$
is a multiple partition structure in the sense of Definition
\ref{DefinitionMultiplePartitionStructure}. In what follows, we refer
to $\left(M_{\theta_1,\ldots,\theta_k;n}^{\Ewens}\right)_{n=1}^{\infty}$
as to the \textit{Ewens multiple partition structure.}

%%%%%%%%%%%%%%%%%%%%%%%%%%%%%%%%%%%%%%%%%%%%%%%%%%%%%%%%%%%%%%%%%%%%%%%%%%%%%%%%%%%%%%%%%%%%%%%%
%%%%%%%%%%%%%%%%%%%%%%%%%%%%%%%%%%%%%%%%%%%%%%%%%%%%%%%%%%%%%%%%%%%%%%%%%%%%%%%%%%%%%%%%%%%%%%%%
%%%%%%%%%%%%%%%%%%%%%%%%%%%%%%%%%%%%%%%%%%%%%%%%%%%%%%%%%%%%%%%%%%%%%%%%%%%%%%%%%%%%%%%%%%%%%%%%
%%%%%%%%%%%%%%%%%%%%%%%%%%%%%%%%%%%%%%%%%%%%%%%%%%%%%%%%%%%%%%%%%%%%%%%%%%%%%%%%%%%%%%%%%%%%%%%%
%%%%%%%%%%%%%%%%%%%%%%%%%%%%%%%%%%%%%%%%%%%%%%%%%%%%%%%%%%%%%%%%%%%%%%%%%%%%%%%%%%%%%%%%%%%%%%%%
%%%%%%%%%%%%%%%%%%%%%%%%%%%%%%%%%%%%%%%%%%%%%%%%%%%%%%%%%%%%%%%%%%%%%%%%%%%%%%%%%%%%%%%%%%%%%%%%
%%%%%%%%%%%%%%%%%%%%%%%%%%%%%%%%%%%%%%%%%%%%%%%%%%%%%%%%%%%%%%%%%%%%%%%%%%%%%%%%%%%%%%%%%%%%%%%%
\section{Proof of Theorem \ref{MAINTHEOREM}}\label{SectionProof}
\subsection{Main steps of the proof}
Our proof of Theorem \ref{MAINTHEOREM} works backward in time: It is based on the coalescent with killing, leading to a generalized Hoppe's urn model.  Our arguments here are generalizations of those of Feng \cite{Feng},
Chapter 4, of Durrett \cite{Durrett}, Section 1.3, and of Tavar$\acute{\mbox{e}}$ \cite{TavareBook}.
To begin with, let us describe in words the main steps of the proof of Theorem \ref{MAINTHEOREM}. The details are presented in Sections
\ref{Section3.1}-\ref{Section3.2} below.
\subsubsection{Step 1} We start from the model defined in Section
\ref{SectionModelDefinition}. For this model,
in Section \ref{Section3.1} we construct a Markov chain
$\left\{Z_{n,2N}(t):\; t=0,1,2,\ldots\right\}$,  where
\begin{itemize}
  \item $n$ is the size of a sample taken from the population of $2N$
  genes at time $t=0$ (present time);
  \item the time $t=j$ is that of going back $j$ generations to the past;
  \item $Z_{n,2N}(0)=n$, $Z_{n,2N}(1)$ is the total  number of ancestors,
  one generation back, $Z_{n,2N}(2)$ is the total number of ancestors, two generations back, etc.
\end{itemize}
The transition probability $P_{2N}(p,m)$ of $\left\{Z_{n,2N}(t):\; t=0,1,2,\ldots\right\}$
is computed in Proposition
\ref{PropositionRelationPPo}, and Proposition \ref{Proposition5.6.1}
gives the asymptotics of $P_{2N}(p,m)$ as $N\longrightarrow\infty$.
\subsubsection{Step 2}We show that if we rescale the time so that it is measured in units of $N$ generations, and let $N\longrightarrow\infty$, then the Markov chain $\left\{Z_{n,2N}(t):\; t=0,1,2,\ldots\right\}$
is approximated by a pure death process $D_n(t)$ called the ancestral process with mutations for a sample of size $n$. This is done in Section
\ref{SectionAncestral}. In particular, we explicitly compute
the intensity of this process. This computation enables us to interpret
the ancestral process with mutations as the coalescence with killing.
\subsubsection{Step 3} The important observation is that the genealogical relationship between $n$ particles described by the ancestral process with mutations can be simulated by running the Hoppe-like urn model of Section \ref{Section3.2}  for $n$ time steps.
In fact, it is clear from the description of the generalized Hoppe urn process in Section \ref{Section3.2} that the choice of a black object corresponds to a new mutation and the choice of a colored (nonblack) object corresponds to a coalescent effect. As we go backward in the generalized Hoppe urn process from time
$j$ to time $j-1$ we lose a particle with probability given by equation
(\ref{MutationProbability}) and have a coalescence with probability given by equation (\ref{CoalescentProbability}).
In Theorem \ref{TheoremGeneralizedHoope} we compute the distribution of a multiple partition $\mathcal{L}_n^{(k)}$ associated with the Hoppe-like
urn containing objects of $k$ different classes (as described in Section \ref{Section3.2}) at time $n$. The same multiple partition describes the genealogical relationship for a random sample of size $n$ taken from a very large population described in Section \ref{SectionModelDefinition}. Therefore, we interpret $\mathcal{L}_n^{(k)}$ from Theorem \ref{TheoremGeneralizedHoope} as a random matrix describing the allelic composition of the sample and obtain the formula (\ref{RefinedESF}).

%%%%%%%%%%%%%%%%%%%%%%%%%%%%%%%%%%%%%%%%%%%%%%%%%%%%%%%%%%%%%%%
%%%%%%%%%%%%%%%%%%%%%%%%%%%%%%%%%%%%%%%%%%%%%%%%%%%%%%%%%%%%%
\subsection{A Markov chain for the Wright-Fisher-type model}
\label{Section3.1}
%%%%%%%%%%%%%%%%%%%%%%%%%%%%%%%%%%%%%%%%%%%%%%%%%%%%%%%%%%%%%
%%%%%%%%%%%%%%%%%%%%%%%%%%%%%%%%%%%%%%%%%%%%%%%%%%%%%%%%%%%%%%%%
Consider the model defined in Section \ref{SectionModelDefinition}.
Choose $p$ individuals from the current generation. Among these individuals,
there are those inherited allelic types of the parents and those mutated to new allelic types; see the description of the reproduction and mutation mechanisms in Section \ref{SectionModelDefinition}. Denote by $\mathcal{E}(p,m)$ the event ``between $p$ individuals, those who inherited allele types have $m$ parents in the previous generation".
Denote by $P_{2N}(p,m)$ the probability of $\mathcal{E}(p,m)$, and by $P_{2N}^{o}(p,m)$
the probability of the same event for the same model without mutations.
There is a simple formula for
$P^{o}_{2N}(p,m)$, that is,
\begin{equation}\label{n2.2.1}
P^{o}_{2N}(p,m)=S(p,m)\frac{(2N)(2N-1)\ldots (2N-m+1)}{(2N)^p},
\end{equation}
where $S(p,m)$  is the total number of ways of partitioning $\{1,\ldots,p\}$ into $m$ non-empty subsets. To see that (\ref{n2.2.1}) is true, observe that each individual can have $2N$ parents, and $(2N)^p$ is the number of possibilities to assign $p$ individuals to their parents.
In addition, $S(p,m)$ can be understood as the number of ways to partition
$p$ individuals into $m$ groups, each group having the same parent. The number $(2N)(2N-1)\ldots (2N-m+1)$ is equal to the number of ways to choose
$m$ different parents from $2N$ possibilities.
\begin{prop}\label{PropositionRelationPPo}The relation between probabilities $P_{2N}(p,m)$ and $P_{2N}^{o}(p,m)$ is
\begin{equation}\label{n2.2}
P_{2N}(p,m)=\sum\limits_{j=0}^{p-m}\left(\begin{array}{c}
  p \\
  j
\end{array}\right)\left(1-\sum\limits_{l=1}^k\mu_l\right)^{p-j}
\left(\sum\limits_{l=1}^k\mu_l\right)^jP_{2N}^{o}(p-j,m).
\end{equation}
Here $m\in\left\{0,1,\ldots,p\right\}$, and $P_{2N}^{o}(p,0)=\delta_{p,0}$.
\end{prop}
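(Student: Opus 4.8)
The plan is to decompose the event $\mathcal{E}(p,m)$ according to how many of the $p$ chosen individuals mutated in the one-generation transition. First I would isolate the two independent random ingredients built into the model of Section \ref{SectionModelDefinition}: each of the $p$ individuals (i) picks a parent uniformly among the $2N$ genes of the previous generation, and (ii) \emph{independently} of that choice, inherits the parent's allele with probability $1-\sum_{l=1}^k\mu_l$ or mutates to a fresh allele of some class $l$ with probability $\mu_l$. These choices are also independent across the $p$ individuals. The key structural remark is that the event $\mathcal{E}(p,m)$ -- ``the individuals who inherited their allele types have $m$ parents'' -- depends only on the parent assignments of the \emph{non-mutants}.

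Next I would condition on the set $S\subseteq\{1,\dots,p\}$ of individuals that mutated. Since the inherit/mutate decisions are i.i.d.\ Bernoulli with ``mutate'' probability $\sum_{l=1}^k\mu_l$, the probability that exactly the individuals in $S$ mutated is $\bigl(\sum_{l}\mu_l\bigr)^{|S|}\bigl(1-\sum_{l}\mu_l\bigr)^{p-|S|}$, and there are $\binom{p}{j}$ such sets with $|S|=j$. Conditionally on $|S|=j$, the $p-j$ non-mutants still have parents that are i.i.d.\ uniform on $\{1,\dots,2N\}$, so the conditional probability that those parents realize exactly $m$ distinct values is, by the very definition of $P_{2N}^{o}$, equal to $P_{2N}^{o}(p-j,m)$ -- this is the same counting event as in the mutation-free model, whose closed form is \eqref{n2.2.1}.

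Finally I would sum over $j$. Realizing $m$ distinct parents requires at least $m$ non-mutants, so $P_{2N}^{o}(p-j,m)=0$ unless $p-j\ge m$; this restricts the sum to $j\in\{0,1,\dots,p-m\}$ and produces exactly \eqref{n2.2}. I would also check the degenerate cases for consistency: for $m=0$ only $j=p$ survives, giving $\bigl(\sum_l\mu_l\bigr)^p P_{2N}^{o}(0,0)=\bigl(\sum_l\mu_l\bigr)^p$, which matches the direct reasoning (all $p$ individuals mutate), and the stated convention $P_{2N}^{o}(p,0)=\delta_{p,0}$ handles the vacuous case where everyone inherits but we demand zero parents.

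There is no genuinely hard step here; the only points requiring care are (a) the justification that conditioning on the mutation pattern leaves the non-mutants' parent choices i.i.d.\ uniform -- i.e.\ a careful invocation of the independence of parent selection and mutation in the model -- and (b) the bookkeeping of the summation limits and edge conventions. A purely combinatorial alternative, expanding \eqref{n2.2.1} and resumming via the Vandermonde/multinomial identities, also works, but the probabilistic decomposition above is shorter and more transparent.
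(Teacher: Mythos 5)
Your proof is correct and follows essentially the same route as the paper: the paper also conditions on the event that exactly $j$ of the $p$ individuals mutated, assigns it the binomial probability $\binom{p}{j}\bigl(1-\sum_l\mu_l\bigr)^{p-j}\bigl(\sum_l\mu_l\bigr)^j$, and identifies the conditional probability of $\mathcal{E}(p,m)$ with $P_{2N}^{o}(p-j,m)$. Your additional remarks on the independence structure and the edge cases only make the same argument more explicit.
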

\begin{proof}
Denote by $\mathcal{M}(j,p)$
the event that ``between $p$ individuals, there are $j$ who mutated to new allelic types''. Note that ``between $p$ individuals, there are $p-j$ who inherited allelic types of the parents" is the same event as $\mathcal{M}(j,p)$.
We have
\begin{equation}
P_{2N}(p,m)=\sum\limits_{j=0}^{p-m}\Prob\left(\mathcal{E}(p,m)|
\mathcal{M}(j,p)\right)\Prob\left(\mathcal{M}(j,p)\right).
\end{equation}
Clearly,
$$
\Prob\left(\mathcal{E}(p,m)|
\mathcal{M}(j,p)\right)=P_{2N}^{o}(p-j,m),
$$
and
$$
\Prob\left(\mathcal{M}(j,p)\right)=\left(\begin{array}{c}
  p \\
  j
\end{array}\right)\left(1-\sum\limits_{l=1}^k\mu_l\right)^{p-j}
\left(\sum\limits_{l=1}^k\mu_l\right)^j.
$$
\end{proof}
\begin{rem}It is not hard to check that $\sum\limits_{m=0}^{p}P_{2N}(p,m)=1$,
where $P_{2N}(p,m)$ is given by equation (\ref{n2.2}). Indeed,
it is known that
$
\sum\limits_{m=0}^pP_{2N}^{o}(p,m)=1,
$
so we have
\begin{equation}
\begin{split}
&\sum\limits_{m=0}^pP_{2N}(p,m)
=\left(\begin{array}{c}
  p \\
  p
\end{array}\right)\left(1-\sum\limits_{l=1}^k\mu_l\right)^{p}
\sum\limits_{m=0}^pP_{2N}^{o}(p,m)\\
&+\left(\begin{array}{c}
  p \\
  p-1
\end{array}\right)\left(1-\sum\limits_{l=1}^k\mu_l\right)^{p-1}
\left(\sum\limits_{l=1}^k\mu_l\right)\sum\limits_{m=0}^{p-1}P_{2N}^{o}(p-1,m)\\
&+\ldots+\left(\begin{array}{c}
  p \\
  0
\end{array}\right)\left(1-\sum\limits_{l=1}^k\mu_l\right)^{p-p}
\left(\sum\limits_{l=1}^k\mu_l\right)^p=1.
\end{split}
\end{equation}
\end{rem}
\begin{prop}\label{Proposition5.6.1} Assume that the mutation probabilities $\mu_l$, $l=1,\ldots, k$,
are weak, i.e
\begin{equation}\label{nmu}
\mu_1:=\mu_1(N)=\frac{\theta_1}{4N},\ldots,\mu_k:=\mu_k(N)=\frac{\theta_k}{4N},
\end{equation}
where $\theta_1>0$, $\ldots$, $\theta_k>0$ are fixed numbers. Then
\begin{equation}\label{n22.6}
P_{2N}(p,m)=\left\{
  \begin{array}{ll}
    1-\frac{p(p-1)+\theta_1p+\ldots+\theta_kp}{4N}+O\left(N^{-2}\right), &  m=p,\\
    \frac{p(p-1)+\theta_1p+\ldots+\theta_kp}{4N}+O\left(N^{-2}\right), & m=p-1, \\
    O\left(N^{-2}\right), & \hbox{otherwise,}
  \end{array}
\right.
\end{equation}
as $N\longrightarrow\infty$.
\end{prop}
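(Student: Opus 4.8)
The plan is to insert the exact formula of Proposition~\ref{PropositionRelationPPo} into the explicit expression (\ref{n2.2.1}) for $P^{o}_{2N}$ and carry out an elementary expansion in powers of $1/N$, keeping all contributions of order $N^{-1}$ and absorbing the remainder into $O(N^{-2})$. Throughout I write $\mu:=\sum_{l=1}^{k}\mu_l=\tfrac{\theta_1+\cdots+\theta_k}{4N}$, so that $\mu^{j}=O(N^{-j})$ and $(1-\mu)^{q}=1+O(N^{-1})$ for any fixed $q$.

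First I would extract from (\ref{n2.2.1}), using $S(q,q)=1$ and $S(q,q-1)=\binom{q}{2}$, the two asymptotics
\begin{equation*}
P^{o}_{2N}(q,q)=\prod_{i=0}^{q-1}\Bigl(1-\tfrac{i}{2N}\Bigr)=1-\tfrac{q(q-1)}{4N}+O(N^{-2}),\qquad
P^{o}_{2N}(q,q-1)=\tfrac{q(q-1)}{4N}+O(N^{-2}),
\end{equation*}
together with the crude bound $P^{o}_{2N}(q,m)=O(N^{-(q-m)})$ valid for all $q\ge m$ (the numerator in (\ref{n2.2.1}) is a polynomial in $N$ of degree $m$, the denominator of degree $q$, and $S(q,m)$ is a fixed constant).

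Then I would split the sum (\ref{n2.2}) into the three regimes of the statement. For $m=p$ only $j=0$ occurs, and $P_{2N}(p,p)=(1-\mu)^{p}\,P^{o}_{2N}(p,p)=\bigl(1-p\mu+O(N^{-2})\bigr)\bigl(1-\tfrac{p(p-1)}{4N}+O(N^{-2})\bigr)$, which collapses to $1-\tfrac{p(p-1)+\theta_1 p+\cdots+\theta_k p}{4N}+O(N^{-2})$ once $p\mu=\tfrac{(\theta_1+\cdots+\theta_k)p}{4N}$ is substituted. For $m=p-1$ only $j=0$ and $j=1$ can contribute a non-$O(N^{-2})$ term: the $j=0$ summand is $(1-\mu)^{p}P^{o}_{2N}(p,p-1)=\tfrac{p(p-1)}{4N}+O(N^{-2})$, and the $j=1$ summand is $p(1-\mu)^{p-1}\mu\,P^{o}_{2N}(p-1,p-1)=\tfrac{(\theta_1+\cdots+\theta_k)p}{4N}+O(N^{-2})$; their sum is the asserted expression. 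For $m\le p-2$, the general summand of (\ref{n2.2}) is $\binom{p}{j}(1-\mu)^{p-j}\mu^{j}P^{o}_{2N}(p-j,m)=O(N^{-j})\cdot O(N^{-(p-j-m)})=O(N^{-(p-m)})$ for each admissible $j$; since $p-m\ge 2$ and the sum is finite, $P_{2N}(p,m)=O(N^{-2})$.

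No genuine difficulty arises; the argument is pure bookkeeping. The only point deserving care is to be sure no first-order contribution is overlooked, and this is guaranteed by the observation just used: the $j$-th summand of (\ref{n2.2}) carries a factor $\mu^{j}$ of order $N^{-j}$ and a factor $P^{o}_{2N}(p-j,m)$ of order $N^{-(p-j-m)}$, so its order is exactly $N^{-(p-m)}$ regardless of $j$, and hence only $m\in\{p,p-1\}$ can produce a term surviving in the limit or at order $N^{-1}$. (The degenerate low-$p$ cases, e.g. $P_{2N}(1,0)=\mu$, are immediately consistent with the stated formula, and as a sanity check the three displayed expressions add up to $1+O(N^{-2})$, in agreement with $\sum_{m=0}^{p}P_{2N}(p,m)=1$.)
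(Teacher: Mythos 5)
Your proposal is correct and follows essentially the same route as the paper: substitute the asymptotics of $P^{o}_{2N}(p,m)$ into the relation (\ref{n2.2}) with $\mu_l=\theta_l/(4N)$ and collect terms of order $N^{-1}$. The only difference is that you derive the expansion (\ref{n2.7}) of $P^{o}_{2N}$ (and the order bound $P^{o}_{2N}(q,m)=O(N^{-(q-m)})$ needed for the ``otherwise'' case) directly from (\ref{n2.2.1}), whereas the paper cites it from Feng's book; your bookkeeping in all three regimes checks out.
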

\begin{proof}The asymptotics of $P_{2N}^{o}(p,m)$ is known, namely
\begin{equation}\label{n2.7}
P_{2N}^{o}(p,m)=\left\{
  \begin{array}{ll}
    1-\frac{p(p-1)}{4N}+O\left(N^{-2}\right), &  m=p,\\
    \frac{p(p-1)}{4N}+O\left(N^{-2}\right), & m=p-1, \\
    O\left(N^{-2}\right), & \hbox{otherwise,}
  \end{array}
\right.
\end{equation}
as $N\longrightarrow\infty$, see, for example, Feng \cite{Feng},
Chapter 4. To obtain (\ref{n22.6}), use (\ref{n2.2}) together
with the asymptotic formulae (\ref{n2.7}) and take (\ref{nmu})
into account.
\end{proof}
A Markov chain $\left\{Z_{n,2N}(t): t=0,1,2,\ldots\right\}$ associated with $P_{2N}(p,m)$ is defined
as follows. Consider the present time as $t=0$. The time $t=j$ will be the time to go back generations $j$ to the past.
Set $Z_{n,2N}(0)=n$, and let $Z_{n,2N}(t)$ be the number of distinct ancestors in generation $t$ of a sample of size $n$ at time $t=0$. The state space of $\left\{Z_{n,2N}(t): t=0,1,2,\ldots\right\}$ is $\{0,1,\ldots,n\}$, the number $0$ is included due to mutations. The probability $P_{2N}(p,m)$
is the transition probability for $Z_{n,2N}(.)$,
\begin{equation}\label{n5.7.3}
\Prob\left(Z_{n,2N}(t+1)=m|Z_{n,2N}(t)=p\right)
=P_{2N}(p,m),\;\; p,m=0,1,\ldots,n.
\end{equation}
%%%%%%%%%%%%%%%%%%%%%%%%%%%%%%%%%%%%%%%%%%%%%%%%%%%%%%%%%%%%%
%%%%%%%%%%%%%%%%%%%%%%%%%%%%%%%%%%%%%%%%%%%%%%%%%%%%%%%%%%%%
\subsection{The ancestral process with mutations}\label{SectionAncestral}
Write
\begin{equation}\label{n5.7.4}
G_{2N}=\left(\begin{array}{cccc}
  P_{2N}(0,0) & P_{2N}(0,1) & \ldots & P_{2N}(0,n) \\
  P_{2N}(1,0) & P_{2N}(1,1) & \ldots & P_{2N}(1,n) \\
  \vdots &  &  &  \\
  P_{2N}(n,0) & P_{2N}(n,1) & \ldots & P_{2N}(n,n)
\end{array}
\right).
\end{equation}
Proposition \ref{Proposition5.6.1} implies
\begin{equation}\label{n5.7.5}
G_{2N}=I+\frac{1}{N}\left(\begin{array}{ccccc}
  0 & 0 & 0 &\ldots & 0 \\
  q_{1,0} & q_{1,1} &  0 & \ldots & 0 \\
  0 & q_{2,1} & q_{2,2} &\ldots & 0\\
  \vdots &  & &  &  \\
  0 & \ldots & 0 & q_{n,n-1} & q_{n,n}
\end{array}
\right)+O\left(\frac{1}{N^2}\right),
\end{equation}
as $N\longrightarrow\infty$. Here
\begin{equation}\label{n5.7.6}
q_{j,j}=-\frac{j(j-1)+\theta_1j+\ldots +\theta_k j}{4},\;\;
q_{j,j-1}=\frac{j(j-1)+\theta_1j+\ldots +\theta_k j}{4},
\end{equation}
where $j\in\left\{1,\ldots,n\right\}$.
Clearly, we have
\begin{equation}\label{n5.7.7}
\underset{N\rightarrow\infty}{\lim}\left(G_{2N}\right)^{Nt}
=e^{Qt},\;\;
Q=\left(\begin{array}{ccccc}
  0 & 0 & 0 &\ldots & 0 \\
  q_{1,0} & q_{1,1} &  0 & \ldots & 0 \\
  0 & q_{2,1} & q_{2,2} &\ldots & 0\\
  \vdots &  & &  &  \\
  0 & \ldots & 0 & q_{n,n-1} & q_{n,n}
\end{array}
\right).
\end{equation}
We conclude that if we rescale the time so that the time is measured
in units of $N$ generations, and let $N\longrightarrow\infty$,
then the Markov chain $Z_{n,2N}(.)$ is approximated by a pure death process $D_n(t)$. It starts from $D_n(0)=n$.
The state space of this process is $\left\{0,1,\ldots,n\right\}$, and the matrix $Q$ defined by
equations (\ref{n5.7.6}), (\ref{n5.7.7}) is the infinitesimal
generator of this process. The process $D_n(t)$ can be called
\textit{the ancestral process with mutations} for a sample of size $n$.

We have
\begin{equation}
\Prob\left(D_n(h)=j-1|D_n(0)=j\right)=
\frac{j(j-1)+\theta_1j+\ldots+\theta_kj}{4}h+o(h),
\end{equation}
as $h\rightarrow 0+$. The expression
$$
\frac{j(j-1)}{4}+\frac{\theta_1 j}{4}+\ldots+\frac{\theta_kj}{4}
$$
is  the transition intensity of the process $D_n(t)$. It is interpreted as
we have a coalescing event with probability
\begin{equation}\label{CoalescentProbability}
\frac{j-1}{j-1+\theta_1+\ldots+\theta_k},
\end{equation}
or a mutation into an allele of class $l$ with probability
\begin{equation}\label{MutationProbability}
\frac{\theta_l}{j-1+\theta_1+\ldots+\theta_k},
\end{equation}
where  $j=1,\ldots,n$, and $l=1,\ldots,k$.

The ancestral process with mutations corresponds
to a generalized Hoppe's urn model described below.

%%%%%%%%%%%%%%%%%%%%%%%%%%%%%%%%%%%%%%%%%%%%%%%%%%%%%%%%%%%%%%%%%%%%%%%%%
%%%%%%%%%%%%%%%%%%%%%%%%%%%%%%%%%%%%%%%%%%%%%%%%%%%%%%%%%%%%%%%%%%%%%%%%%
%%%%%%%%%%%%%%%%%%%%%%%%%%%%%%%%%%%%%%%%%%%%%%%%%%%%%%%%%%%%%%%%%%%%%%%%%
%%%%%%%%%%%%%%%%%%%%%%%%%%%%%%%%%%%%%%%%%%%%%%%%%%%%%%%%%%%%%%%%%%%%%%%%%
%%%%%%%%%%%%%%%%%%%%%%%%%%%%%%%%%%%%%%%%%%%%%%%%%%%%%%%%%%%%%%%%%%%%%%%%%
%%%%%%%%%%%%%%%%%%%%%%%%%%%%%%%%%%%%%%%%%%%%%%%%%%%%%%%%%%%%%%%%%%%%%%%%%
%%%%%%%%%%%%%%%%%%%%%%%%%%%%%%%%%%%%%%%%%%%%%%%%%%%%%%%%%%%%%%%%%%%%%%%%%
%%%%%%%%%%%%%%%%%%%%%%%%%%%%%%%%%%%%%%%%%%%%%%%%%%%%%%%%%%%%%%%%%%%%%%%%%
%%%%%%%%%%%%%%%%%%%%%%%%%%%%%%%%%%%%%%%%%%%%%%%%%%%%%%%%%%%%%%%%%%%%%%%%%
%%%%%%%%%%%%%%%%%%%%%%%%%%%%%%%%%%%%%%%%%%%%%%%%%%%%%%%%%%%%%%%%%%%%%%%%%
%%%%%%%%%%%%%%%%%%%%%%%%%%%%%%%%%%%%%%%%%%%%%%%%%%%%%%%%%%%%%%%%%%%%%%%%%
%%%%%%%%%%%%%%%%%%%%%%%%%%%%%%%%%%%%%%%%%%%%%%%%%%%%%%%%%%%%%%%%%%%%%%%%%
%%%%%%%%%%%%%%%%%%%%%%%%%%%%%%%%%%%%%%%%%%%%%%%%%%%%%%%%%%%%%%%%%%%%%%%%%
%%%%%%%%%%%%%%%%%%%%%%%%%%%%%%%%%%%%%%%%%%%%%%%%%%%%%%%%%%%%%%%%%%%%%%%%%
%%%%%%%%%%%%%%%%%%%%%%%%%%%%%%%%%%%%%%%%%%%%%%%%%%%%%%%%%%%%%%%%%%%%%%%%%
%%%%%%%%%%%%%%%%%%%%%%%%%%%%%%%%%%%%%%%%%%%%%%%%%%%%%%%%%%%%%%%%%%%%%%%%%
%%%%%%%%%%%%%%%%%%%%%%%%%%%%%%%%%%%%%%%%%%%%%%%%%%%%%%%%%%%%%%%%%%%%%%%%%
%%%%%%%%%%%%%%%%%%%%%%%%%%%%%%%%%%%%%%%%%%%%%%%%%%%%%%%%%%%%%%%%%%%%%%%%%
%%%%%%%%%%%%%%%%%%%%%%%%%%%%%%%%%%%%%%%%%%%%%%%%%%%%%%%%%%%%%%%%%%%%%%%%%
%%%%%%%%%%%%%%%%%%%%%%%%%%%%%%%%%%%%%%%%%%%%%%%%%%%%%%%%%%%%%%%%%%%%%%%%%

\subsection{A generalization of the Hoppe urn model.}\label{Section3.2}
Consider an urn that contains objects of $k$ different classes
(for example balls, cubes, pyramids, etc.). For each $l\in\{1,\ldots,k\}$
the $l$th class is represented by a black object with mass $\theta_l>0$ and
by a number of colored objects of the same class.
We agree that every colored (non-black) object has mass 1.
%\begin{figure}[t]
%\begin{center}
%{\scalebox{0.5}{\includegraphics{Urn.pdf}}}
%\end{center}
%\caption{The urn with objects of three different types.}
%\label{Fig.1}
%\end{figure}

Assume that such an urn is given and contains $n$ objects.
A multiple partition $\Lambda_n^{(k)}=\left(\lambda^{(1)},\ldots,\lambda^{(k)}\right)$ of $n$
into $k$ components can be associated with the urn as follows.
Consider colored objects of the first class and denote
by $\lambda_1^{(1)}$ the number of such objects of the most frequent color, by $\lambda_2^{(1)}$ the number of such objects of the next frequent color, etc.
As a result, we will obtain a Young diagram $\lambda^{(1)}$ associated with the objects of the first class in the urn.
The Young diagrams $\lambda^{(2)}$, $\ldots$, $\lambda^{(k)}$
associated with objects of other classes are constructed in the same way.

Let us define a process which is a generalization
of the Hoppe urn process in Ref. \cite{Hoppe1}.
Originally, there are only $k$ black objects in the urn.
Namely, there is one black object of the first class
with mass $\theta_1$, there is one black object of the second class with mass $\theta_2$, $\ldots$,  there is one black object of the $k$th class with mass $\theta_k$.
At each time $j=1,2,\ldots$, an object is selected at random with a probability proportional to its mass. If a colored (non-black) object is selected, that object and another object of the same class and of the same color are returned to the urn. If a black object is chosen, that object is returned to the urn with an object of the same class and of a new color. This new object has mass $1$.

Let $\mathfrak{L}_j^{(k)}$ be a multiple partition associated
with the urn containing objects of $k$ different classes (as described above) at time j. Then $\mathfrak{L}_j^{(k)}$ is a random variable, and
the collection $\left\{\mathfrak{L}_j^{(k)}\right\}_{j=1}^{\infty}$
is a Markov process. The marginal distribution of this process
is given by the next Theorem.
\begin{thm}\label{TheoremGeneralizedHoope}
Let $\lambda^{(1)}$, $\ldots$, $\lambda^{(k)}$ be fixed
Young diagrams such that
$$
\left|\lambda^{(1)}\right|+\ldots+\left|\lambda^{(k)}\right|=n.
$$
Then $\Lambda_n^{(k)}=\left(\lambda^{(1)},\ldots,\lambda^{(k)}\right)$ is a fixed multiple partition of $n$ into $k$ components. We have
\begin{equation}
\Prob\left(\mathfrak{L}_n^{(k)}=\Lambda_n^{(k)}\right)=M_{\theta_1,\ldots,\theta_k;n}^{\Ewens}\left(\Lambda_n^{(k)}\right),
\end{equation}
where $n=1,2,\ldots$, and the probability measure $M_{\theta_1,\ldots,\theta_k;n}^{\Ewens}$ is defined by equations (\ref{MultipleEwensMeasure}) and (\ref{EwensMeasure}).
\end{thm}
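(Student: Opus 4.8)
The plan is to reduce the urn dynamics to a counting problem on set partitions, exactly as in the classical Hoppe-urn derivation of the Ewens sampling formula (the arguments of Feng, Durrett and Tavar\'e quoted above), but keeping track of classes. Along a run of the generalized urn of length $n$, record for each of the $n$ draws which colored object it either copies or creates; draw number $j$ then ``belongs'' to a color, and each color carries a class in $\{1,\dots,k\}$, namely the class of the black object drawn when that color was born. This packages a run into a set partition of $\{1,\dots,n\}$ together with a class label on each block — call it a \emph{$k$-colored set partition} — and the multiple partition $\mathfrak{L}_n^{(k)}$ is simply the list of block-size sequences, one Young diagram $\lambda^{(l)}$ per class $l$. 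So it suffices to (i) compute the probability that the urn produces a prescribed $k$-colored set partition $\pi$, and (ii) count the $\pi$'s that yield a fixed $\Lambda_n^{(k)}$.

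For (i), observe that at the start of step $j$ the urn carries the $k$ black masses $\theta_1,\dots,\theta_k$ together with $j-1$ colored objects of mass $1$, hence total mass $\theta+j-1$ with $\theta:=\theta_1+\dots+\theta_k$; moreover, since colors are immaterial and only the ``pile'' structure matters, the whole run is reconstructed uniquely from $\pi$ (draw $j$ creates a block iff $j$ is the least element of its $\pi$-block, in which case the black object of that block's class must have been drawn; otherwise draw $j$ copies the color of its block, which at that instant has as many members as there are smaller elements of $\{1,\dots,n\}$ in the same block). Multiplying the one-step probabilities over $j=1,\dots,n$, the denominators telescope to $(\theta)_n$ and the numerator factors over blocks, a block of size $s$ and class $l$ contributing $\theta_l\cdot 1\cdot 2\cdots(s-1)=\theta_l\,(s-1)!$ (the $\theta_l$ from the creation step, the factor $m$ from the step at which the block grew from size $m$ to $m+1$). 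Thus $\Prob(\pi)=(\theta)_n^{-1}\prod_{l=1}^k\theta_l^{r_l}\prod_{B\in\pi_l}(|B|-1)!$, where $\pi_l$ is the set of class-$l$ blocks and $r_l=\#\pi_l$; in particular $\Prob(\pi)$ depends only on $\Lambda_n^{(k)}$.

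For (ii), with $\lambda^{(l)}=\bigl(1^{a_1^{(l)}}2^{a_2^{(l)}}\cdots\bigr)$ the number of $k$-colored set partitions of $\{1,\dots,n\}$ having, for every pair $(l,j)$, exactly $a_j^{(l)}$ class-$l$ blocks of size $j$ is the multinomial coefficient $n!\big/\prod_{l,j}(j!)^{a_j^{(l)}}a_j^{(l)}!$, blocks of equal size and equal class being interchangeable. Multiplying by the value of $\Prob(\pi)$ from (i) and using $(s-1)!/s!=1/s$ collapses everything to
\[
\Prob\bigl(\mathfrak{L}_n^{(k)}=\Lambda_n^{(k)}\bigr)=\frac{n!}{(\theta_1+\dots+\theta_k)_n}\prod_{l=1}^k\prod_{j=1}^n\Bigl(\frac{\theta_l}{j}\Bigr)^{a_j^{(l)}}\frac{1}{a_j^{(l)}!},
\]
which is formula (\ref{MEM}); by the algebraic identity established in the proof of Proposition \ref{PropositionIndeedDistribution} this equals $M_{\theta_1,\dots,\theta_k;n}^{\Ewens}(\Lambda_n^{(k)})$ in the form (\ref{MultipleEwensMeasure}), proving the theorem (and, via Section \ref{SectionProof}, Theorem \ref{MAINTHEOREM}).

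I expect the one place that needs real care is the counting in step (ii): one must check that the multinomial coefficient neither double-counts nor under-counts, i.e.\ that several colors of the same class producing the same diagram $\lambda^{(l)}$ are genuinely identified while blocks of equal size but different classes are kept apart, and that this is consistent with the ``unique run'' claim in (i) — that claim is what converts the urn dynamics into the product formula and so should be stated carefully even though it is routine. An alternative that sidesteps the explicit count is induction on $n$: verify that $M_{\theta_1,\dots,\theta_k;n}^{\Ewens}$ satisfies the forward recursion imposed by a single urn step — append a box to a row of $\lambda^{(l)}$ with probability proportional to the length of that row, or open a new length-one row in class $l$ with probability proportional to $\theta_l$, all over the total mass $\theta+n-1$ — which is essentially the computation underlying Proposition \ref{PropositionMEWENSMPS} read in the increasing direction.
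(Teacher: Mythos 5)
Your proof is correct, but it takes a genuinely different route from the one the paper gives for Theorem \ref{TheoremGeneralizedHoope}. The paper argues by induction on $n$: it writes down the one-step transition probabilities of the generalized Hoppe urn (adding a new singleton row in class $l$ with probability $\theta_l/(\theta_1+\dots+\theta_k+n-1)$, or growing a length-$j$ row to length $j+1$ with probability $j(m_j^{(l)}+1)/(\theta_1+\dots+\theta_k+n-1)$), derives the resulting backward recursion for $P_n^{\Hoppe}$, and checks that $M^{\Ewens}_{\theta_1,\dots,\theta_k;n}$ satisfies the same recursion, the verification reducing to the identity $\sum_l m_1^{(l)}+\sum_l\sum_j(j+1)m_{j+1}^{(l)}=n$ --- exactly the alternative you sketch in your closing remark. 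Your argument instead disintegrates the urn run over class-labelled set partitions of $\{1,\dots,n\}$, computes $\Prob(\pi)=(\theta)_n^{-1}\prod_l\theta_l^{r_l}\prod_{B}(|B|-1)!$ by telescoping the one-step masses, and multiplies by the count $n!\big/\prod_{l,j}(j!)^{a_j^{(l)}}a_j^{(l)}!$ of such $\pi$ compatible with a given $\Lambda_n^{(k)}$; both ingredients are correct (the enumeration keeps equal-size blocks of different classes apart and identifies interchangeable blocks of the same size and class, as you flag). This is in substance the route the paper itself takes later, in Section \ref{SectionRandomSetPartitions}, where the block-probability formula is equation (\ref{3.7.1}) and the passage to the multiple partition is equations (\ref{3.8.4})--(\ref{3.8.6}); note only that the paper there \emph{deduces} (\ref{3.7.1}) from the already-proved urn result, whereas you prove it directly from the dynamics, so your version is self-contained and not circular. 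What each approach buys: the inductive proof is shorter and needs no enumeration, while yours yields the finer exchangeable-set-partition distribution (\ref{3.7.1}) as a byproduct, which the paper needs anyway in Section \ref{SectionRandomSetPartitions}.
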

\begin{proof}
We proceed by induction. If $n=1$, the possible multiple partitions are
$$
((1),\varnothing,\ldots,\varnothing),\; (\varnothing, (1),\varnothing,\ldots,\varnothing),\ldots,
(\varnothing,\varnothing,\ldots,(1)).
$$
In particular, the multiple partition
$$
\left(\underset{l}{\underbrace{\varnothing,\ldots,\varnothing, (1)}},\varnothing,\ldots,\varnothing\right)
$$
has probability $\frac{\theta_l}{\theta_1+\ldots+\theta_k}$. On the other hand,
\begin{equation}
\begin{split}
&M^{\Ewens}_{\theta_1,\ldots,\theta_k; n=1}\left(\left(\underset{l}{\underbrace{\varnothing,\ldots,\varnothing, (1)}},\varnothing,\ldots,\varnothing\right)
\right)\\
&=\frac{(\theta_l)_1}{(\theta_1+\ldots+\theta_k)_1}
\frac{1!}{0!\ldots 1!\ldots 0!}
M^{\Ewens}_{\theta_1,|\lambda^{(1)}|=0}\left(\varnothing\right)
\ldots
M^{\Ewens}_{\theta_l,|\lambda^{(l)}|=1}\left((1)\right)
\ldots
M^{\Ewens}_{\theta_k,|\lambda^{(k)}|=0}\left(\varnothing\right)\\
&=\frac{\theta_l}{\theta_1+\ldots+\theta_k},
\end{split}
\end{equation}
so the result holds true.

Denote by $P_{n}^{\Hoppe}$ the probability measure describing the distribution of the random multiple
partition $\mathfrak{L}_n^{(k)}$. We have $\Prob\left(\mathfrak{L}_n^{(k)}=\Lambda_n^{(k)}\right)=P_n^{\Hoppe}(\Lambda_n^{(k)})$, and
\begin{equation}\label{P1}
P_n^{\Hoppe}\left(\Lambda_n^{(k)}\right)=\sum\limits_{\Lambda_{n-1}^{(k)}\in\Y_{n-1}^{(k)}}
P^{\Hoppe}_{n-1}\left(\Lambda_{n-1}^{(k)}\right)p\left(\Lambda_{n-1}^{(k)},\Lambda_n^{(k)}\right),
\end{equation}
where
$$
p\left(\Lambda_{n-1}^{(k)},\Lambda_n^{(k)}\right)
=\Prob\left(\mathfrak{L}_n^{(k)}=\Lambda_n^{(k)}|\mathfrak{L}_{n-1}^{(k)}=\Lambda_{n-1}^{(k)}\right)
$$
is the transition probability of the Markov process $\left\{\mathfrak{L}_j^{(k)}\right\}_{j=1}^{\infty}$. Let us find this transition probability explicitly.
We rewrite equation (\ref{P1}) as
\begin{equation}
\begin{split}
&P^{\Hoppe}_n\left(\left(\lambda^{(1)},\ldots,\lambda^{(k)}\right)\right)\\
&=\sum\limits_{\mu^{(1)}\nearrow\lambda^{(1)}}P_{n-1}^{\Hoppe}\left(\left(\mu^{(1)},\lambda^{(2)},\ldots,\lambda^{(k)}\right)\right)
p\left(\left(\mu^{(1)},\lambda^{(2)},\ldots,\lambda^{(k)}\right),\left(\lambda^{(1)},\lambda^{(2)},\ldots,\lambda^{(k)}\right)\right)\\
&+\sum\limits_{\mu^{(2)}\nearrow\lambda^{(2)}}P_{n-1}^{\Hoppe}\left(\left(\lambda^{(1)},\mu^{(2)},\ldots,\lambda^{(k)}\right)\right)
p\left(\left(\lambda^{(1)},\mu^{(2)},\ldots,\lambda^{(k)}\right),\left(\lambda^{(1)},\lambda^{(2)},\ldots,\lambda^{(k)}\right)\right)\\
&+\ldots\\
&+\sum\limits_{\mu^{(k)}\nearrow\lambda^{(k)}}P_{n-1}^{\Hoppe}\left(\left(\lambda^{(1)},\lambda^{(2)},\ldots,\mu^{(k)}\right)\right)
p\left(\left(\lambda^{(1)},\lambda^{(2)},\ldots,\mu^{(k)}\right),\left(\lambda^{(1)},\lambda^{(2)},\ldots,\lambda^{(k)}\right)\right),
\end{split}
\end{equation}
where $\mu^{(l)}\nearrow\lambda^{(l)}$ means that $\lambda^{(l)}$ is obtained from $\mu^{(l)}$ by adding one box.
In order to get a formula for the transition probability
$$
p\left(\left(\lambda^{(1)},\ldots,\mu^{(l)},\ldots,\lambda^{(k)}\right),\left(\lambda^{(1)},\ldots,\lambda^{(l)},\ldots,\lambda^{(k)}\right)\right)
$$
(where $\mu^{(l)}\nearrow\lambda^{(l)}$ ) we observe that $\lambda^{(l)}$ can be obtained from $\mu^{(l)}$ in two ways:\\
(a) A new box is attached to the bottom of $\mu^{(l)}$. If $\lambda^{(l)}
=\left(1^{m_1^{(l)}}2^{m_2^{(l)}}\ldots\right)$, then
$\mu^{(l)}=\left(1^{m_1^{(l)}-1}2^{m_2^{(l)}}\ldots\right)$.
This corresponds to the addition of an object of class $l$ with a new color.
In this case, the transition probability for the generalized Hoppe urn model is
$$
\frac{\theta_l}{\theta_1+\ldots+\theta_k+n-1}.
$$
(b) A row of length $j$ in $\mu^{(l)}$ becomes a row of length $j+1$ in $\lambda^{(l)}$, where $j$ takes values in the set
$\left\{1,2,\ldots,\lambda_1^{(l)}-1\right\}$. If
$$
\lambda^{(l)}=\left(1^{m_1^{(l)}}2^{m_2^{(l)}}...\right),
$$
then
$$
\mu^{(l)}=\left(1^{m_1^{(l)}}2^{m_2^{(l)}}\ldots (j-1)^{m_{j-1}^{(l)}}j^{m_j^{(l)}+1}(j+1)^{m_{j+1}^{(l)}-1}(j+2)^{m_{j+2}^{(l)}}\ldots\right).
$$
In this case, at time $n-1$ a colored (non-black) object of class $l$ is chosen from the existing $j$ objects of class $l$. The chosen object is added with an additional object of the same class, and of the same color. The corresponding transition probability is
$$
\frac{j\left(m_j^{(l)}+1\right)}{\theta_1+\ldots+\theta_k+n-1}.
$$
It follows that the distribution $P^{\Hoppe}_n$ of the generalized Hoppe urn satisfies the recursion
\begin{equation}
\begin{split}
&1=\sum\limits_{l=1}^k\frac{\theta_l}{\theta_1+\ldots+\theta_k+n-1}
\frac{P^{\Hoppe}_{n-1}\left(1^{m_1^{(1)}}2^{m_2^{(1)}}\ldots;1^{m_1^{(l)}-1}2^{m_2^{(l)}}\ldots;
1^{m_1^{(k)}}2^{m_2^{(k)}}\ldots\right)}{P^{\Hoppe}_{n}\left(1^{m_1^{(1)}}2^{m_2^{(1)}}\ldots;1^{m_1^{(l)}}2^{m_2^{(l)}}\ldots;
1^{m_1^{(k)}}2^{m_2^{(k)}}\ldots\right)}\\
&+\sum\limits_{l=1}^k\sum\limits_{j=1}^{\lambda_1^{(l)}-1}
\frac{j(m_j^{(l)}+1)P^{\Hoppe}_{n-1}\left(1^{m_1^{(1)}}2^{m_2^{(1)}}\ldots;\ldots;1^{m_1^{(l)}}\ldots j^{m_j^{(l)}+1}(j+1)^{m_{j+1}^{(l)}-1}\ldots;\ldots;
1^{m_1^{(k)}}2^{m_2^{(k)}}\ldots\right)}{(\theta_1+\ldots+\theta_k+n-1)P^{\Hoppe}_{n}\left(1^{m_1^{(1)}}2^{m_2^{(1)}}\ldots;\ldots;1^{m_1^{(l)}}2^{m_2^{(l)}}\ldots;
\ldots;1^{m_1^{(k)}}2^{m_2^{(k)}}\ldots\right)}.
\end{split}
\nonumber
\end{equation}
Let us consider the probability measure $M^{\Ewens}_{\theta_1,\ldots,\theta_k;n}$ defined by equations (\ref{MultipleEwensMeasure}) and (\ref{EwensMeasure}). It is enough to show that
$M^{\Ewens}_{\theta_1,\ldots,\theta_k;n}$ satisfies the same recursion as $P^{\Hoppe}_{n}$.
We have
\begin{equation}
\frac{M^{\Ewens}_{\theta_1,\ldots,\theta_k;n-1}\left(1^{m_1^{(1)}}2^{m_2^{(1)}}\ldots;1^{m_1^{(l)}-1}2^{m_2^{(l)}}\ldots;
1^{m_1^{(k)}}2^{m_2^{(k)}}\ldots\right)}{M^{\Ewens}_{\theta_1,\ldots,\theta_k;n}\left(1^{m_1^{(1)}}2^{m_2^{(1)}}\ldots;1^{m_1^{(l)}}2^{m_2^{(l)}}\ldots;
1^{m_1^{(k)}}2^{m_2^{(k)}}\ldots\right)}=\frac{m_1^{(l)}\left(\theta_1+\ldots+\theta_k+n-1\right)}{n\theta_l},
\end{equation}
and
\begin{equation}
\begin{split}
&\frac{M^{\Ewens}_{\theta_1,\ldots,\theta_k;n-1}\left(1^{m_1^{(1)}}2^{m_2^{(1)}}\ldots;\ldots;1^{m_1^{(l)}}\ldots j^{m_j^{(l)}+1}(j+1)^{m_{j+1}^{(l)}-1}\ldots;\ldots;
1^{m_1^{(k)}}2^{m_2^{(k)}}\ldots\right)}{M^{\Ewens}_{\theta_1,\ldots,\theta_k;n}\left(1^{m_1^{(1)}}2^{m_2^{(1)}}\ldots;\ldots;1^{m_1^{(l)}}2^{m_2^{(l)}}\ldots;
\ldots;1^{m_1^{(k)}}2^{m_2^{(k)}}\ldots\right)}\\
&=\frac{\theta_1+\ldots+\theta_k+n-1}{n}\;\frac{j+1}{j}
\frac{m_{j+1}^{(l)}}{m_j^{(l)}+1}.
\end{split}
\end{equation}
We see that $M^{\Ewens}_{\theta_1,\ldots,\theta_k;n}$ satisfies the same recurrence relation as $P^{\Hoppe}_{n}$ provided that the condition
\begin{equation}\label{condition1}
1=\sum\limits_{l=1}^{k}\frac{m_1^{(l)}}{n}+\sum\limits_{l=1}^k\sum\limits_{j=1}^{\lambda_1^{(l)}-1}\frac{(j+1)m_{j+1}^{(l)}}{n}.
\end{equation}
is satisfied. Since
$$
m_1^{(l)}+\sum\limits_{j=1}^{\lambda_1^{(l)}-1}(j+1)m_{j+1}^{(l)}=\sum\limits_{j=1}^{\lambda_1^{(l)}}jm_j^{(l)}=|\lambda^{(l)}|,
$$
and $|\lambda^{(1)}|+\ldots+|\lambda^{(k)}|=n$, condition (\ref{condition1}) holds true indeed.
\end{proof}
Finally, interpret $\mathfrak{L}_n^{(k)}$ from Theorem \ref{TheoremGeneralizedHoope} as a random matrix describing the allelic
composition of a sample of size $n$ (see the beginning of Section \ref{SectionRESFMPS})
whose genealogy is described by coalescence with probability (\ref{CoalescentProbability}), and mutations with
probabilities (\ref{MutationProbability}).  With such an interpretation, Theorem \ref{TheoremGeneralizedHoope} and Proposition \ref{PropositionIndeedDistribution} say that the joint probability distribution of the random variables
$A_j^{(l)}(n)$
is given by the refined Ewens sampling formula, equation (\ref{RefinedESF}). Theorem \ref{MAINTHEOREM} is proved.
\qed

\section{The Ewens multiple partition structure and the multiple Poisson-Dirichlet distribution}\label{SectionDerivation1}

%%%%%%%%%%%%%%%%%%%%%%%%%%%%%%%%%%%%%%%%%%%%%%%%%%%%%%%%%%%%%%%%%%%%%%%%%%%%%%%%%%%%%%%%%%%%%%%%%%%%%%%%%%%%%%%%%%%%%%%%%%%%%%%%%%%%%%%%%%%%%%%%%%%%%%%%%%%%%%%%%%%%%%%%%%%%%%%%%%%%%%%%%%%%%%%%%%%%%%%%%%%%%%%%%%%%%%%%%%%%%%%%%%%%%%%%%%%%%%%%%%%%%%%%%%%%
In this Section we present a second derivation of formula (\ref{RefinedESF}) in the context of a Wright-Fisher-type model with infinite many allelic types
and mutations.

Our second derivation
works forward in time: we compute the probability of the desired event by conditioning on the allele composition at the sampling time and then we integrate
this probability according to the equilibrium measure for the allele frequencies.
We show that this equilibrium measure should be the multiple Poisson-Dirichlet distribution $PD(\theta_1,\ldots,\theta_k)$.
Our derivation here uses heuristic arguments. However,
it provides a representation of the multiple Ewens distribution in terms of the multiple Poisson-Dirichlet distribution and
an intuitive explanation of this representation.

Consider an infinite population divided into
$k$ classes, where each class is a union of a countable number of sub-populations characterized by their own allelic types. Thus, a subpopulation is labeled by
the pair $(j,l)$ of numbers, indicating that it is characterized by the $j$th allelic type of the mutation class numbered $l$. Here $j=1,2,\ldots,$ and $l=1,\ldots,k$.

Suppose that the proportion of the population labeled by $(j,l)$ is $x_j^{(l)}$. Then
$x_j^{(l)}\geq 0$,  and
\begin{equation}\label{111.2.1}
\begin{split}
&x_1^{(1)}+x_2^{(1)}+\ldots=\delta^{(1)}\\
\vdots\\
&x_1^{(k)}+x_2^{(k)}+\ldots=\delta^{(k)}.
\end{split}
\end{equation}
The parameter $\delta^{(l)}$ characterizes the frequency of the $l$th class in the population, so the condition
\begin{equation}\label{111.2.2}
\delta^{(1)}+\ldots+\delta^{(k)}=1
\end{equation}
should be satisfied.

Let $n$ be the size of a sample chosen from the population. The alleles presented in the sample can be characterized by a multiple partition
\begin{equation}\label{LYM1}
\Lambda_n^{(k)}=\left(\lambda^{(1)},\ldots,\lambda^{(k)}\right),
\;\;|\lambda^{(1)}|+\ldots+|\lambda^{(k)}|=n.
\end{equation}
Recall that the meaning of $\Lambda_n^{(k)}$ is the following. Write
\begin{equation}\label{LYM2}
\lambda^{(l)}=\left(1^{m_1^{(l)}}2^{m_2^{(l)}}\ldots n^{m_n^{(l)}}\right),
\end{equation}
for each $l\in\left\{1,\ldots,k\right\}$. Then $m_j^{(l)}$ is the number of alleles of the $l$th class that appear $j$ times in the sample.

If a sample of size $n$ is taken at random from the population, the corresponding multiple partition that describes the allelic composition of the sample is a random variable
$\mathfrak{L}_n^{(k)}$ taking values in $\Y_n^{(k)}$. Here we show that the distribution of $\mathfrak{L}_n^{(k)}$ is given by $M^{\Ewens}_{\theta_1,\ldots,\theta_k; n}$.
This is equivalent to the refined Ewens sampling formula (equation (\ref{RefinedESF}))
as explained in Section \ref{SectionProof}.

Assume first that the frequencies $x_j^{(l)}$ introduced above are fixed positive numbers satisfying (\ref{111.2.1}). The probability that for each $l=1,\ldots,k$  a random sample of size $n$ has $m_1^{(l)}$ of alleles of the class
$l$ represented once, $m_2^{(l)}$ alleles of the class $l$ represented twice and so on is
\begin{equation}
\begin{split}
&\mathbb{K}\left(\Lambda_n^{(k)},x^{(1)},\ldots,x^{(k)}\right)
=\frac{n!}{\prod_{l=1}^k\prod_{j=1}^n\left(j!\right)^{m_j^{(l)}}}\\
&\times
\sum\left[\left(x_1^{(1)}\right)^{\nu_1^{(1)}}\left(x_2^{(1)}\right)^{\nu_2^{(1)}}\ldots\right]
\ldots\left[\left(x_1^{(k)}\right)^{\nu_1^{(k)}}\left(x_2^{(k)}\right)^{\nu_2^{(k)}}\ldots\right],
\end{split}
\end{equation}
where  $\Lambda_n^{(k)}$ is defined by (\ref{LYM1}) and (\ref{LYM2}), the sum is over all $k$ sequences $\left(\nu_1^{(1)},\nu_2^{(1)},\ldots\right)$,
$\ldots$, $\left(\nu_1^{(k)},\nu_2^{(k)},\ldots\right)$ of integers $0\leq\nu_j^{(l)}\leq n$
such that the number of elements equal to $j$ in $\left(\nu_1^{(l)},\nu_2^{(l)},\ldots\right)$
is precisely $m_j^{(l)}$, where $l=1,\ldots,k$, and $j=1,\ldots,n$. We can write
\begin{equation}
\begin{split}
&\mathbb{K}\left(\Lambda_n^{(k)},x^{(1)},\ldots,x^{(k)}\right)
=\frac{n!}{\prod_{l=1}^k\prod_{j=1}^n\left(j!\right)^{m_j^{(l)}}}\;
m_{\lambda^{(1)}}\left(x^{(1)}\right)\ldots m_{\lambda^{(k)}}\left(x^{(k)}\right),
\end{split}
\end{equation}
where $m_{\lambda^{(l)}}\left(x^{(l)}\right)$ is the monomial symmetric function.

Next, we allow $x^{(1)}$, $\ldots$, $x^{(k)}$ to be random. If we restrict ourselves to samples containing alleles from class $l$, then the corresponding frequencies should be governed by the Poisson-Dirichlet  distribution\footnote{For the definition of the Poisson-Dirichlet distribution, and its connection
to the Ewens sampling formula we refer the reader to book by Feng \cite{Feng}.} $PD(\theta_l)$, where $\theta_l$ is the parameter characterizing mutation in class $l$. The parameters $\delta_1$, $\ldots$, $\delta_k$ are random variables and are interpreted as proportions of sub-populations.
Clearly, these random variables should satisfy
\begin{equation}
\mathbb{E}\left(\delta_1\right)=\frac{\theta_1}{\theta_1+\ldots+\theta_k},\ldots,\mathbb{E}\left(\delta_k\right)=\frac{\theta_k}{\theta_1+\ldots+\theta_k}.
\end{equation}
The Dirichlet distribution $D\left(\theta_1,\ldots,\theta_k\right)$ is a suitable choice for the joint distribution of the random variables $\delta_1$, $\ldots$, $\delta_k$ (see, for example,
the book by Kotz, Balakrishnan and Johnson \cite{KotzBalakrishnanJohnson}, Chapter 49,  where different properties and applications of the Dirichlet distribution are described. In particular,   see the discussion in Chapter 49, $\S$5 of this book on characterizations of the Dirichlet distribution).

This suggests the following description of random frequencies
$x^{(l)}=\left(x_1^{(l)},x_2^{(l)},\ldots\right)$, $l=1,\ldots,k$. Let us assume that for each $l=1,\ldots,k$ the sequence
$y^{(l)}=\left(y_1^{(l)}\geq y_2^{(l)}\ldots\geq 0\right)$ of random variables is governed by the Poisson-Dirichlet distribution by $PD\left(\theta_l\right)$, and the sequences $y^{(1)}$, $\ldots$, $y^{(k)}$ are independent. In addition, let $\delta_1$, $\ldots$, $\delta_k$ be random variables independent of $y^{(1)}$, $\ldots$, $y^{(k)}$ whose joint distribution is $D(\theta_1,\ldots,\theta_k)$. Then each sequence $x^{(l)}$ is distributed as
$$
\delta_ly^{(l)}=\left(\delta_ly_1^{(l)}\geq\delta_ly_2^{(l)}\geq \ldots\geq 0\right).
$$
The joint distribution of $x^{(1)}$, $\ldots$, $x^{(k)}$ is called the \textit{multiple Poisson-Dirichlet distribution}, and it is denoted by $PD\left(\theta_1,\ldots,\theta_k\right)$.
\begin{prop}\label{PropositionMEwenSASIntegral}
We have
\begin{equation}\label{R1.7}
\int\limits_{\overline{\nabla}_0^{(k)}}\mathbb{K}\left(\Lambda_n^{(k)},\omega\right)PD(\theta_1,\ldots,\theta_k)(d\omega)=M_{\theta_1,\ldots,\theta_k;n}^{\Ewens}\left(\Lambda_n^{(k)}\right),
\end{equation}
where $\overline{\nabla}_0^{(k)}$ is defined by
\begin{equation}\label{SetNabla}
\begin{split}
\overline{\nabla}^{(k)}_0=&\biggl\{(x,\delta)\biggr|
x=\left(x^{(1)},\ldots,x^{(k)}\right),\;
\delta=\left(\delta^{(1)},\ldots,\delta^{(k)}\right);\\
&x^{(l)}=\left(x^{(l)}_1\geq x^{(l)}_2\geq\ldots\geq 0\right),\;\delta^{(l)}\geq 0,\;
1\leq l\leq k,\\
&\mbox{where}\;\;\sum\limits_{i=1}^{\infty}x_i^{(l)}=\delta^{(l)},\; 1\leq l\leq k,\;\mbox{and}\;\sum\limits_{l=1}^k\delta^{(l)}=1\biggl\}.
\end{split}
\end{equation}
\end{prop}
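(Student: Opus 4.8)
The plan is to evaluate the integral in (\ref{R1.7}) by unwinding the definition of $PD(\theta_1,\ldots,\theta_k)$ given just before the statement and reducing everything to two classical ingredients: the moment formula for the Dirichlet distribution, and the known representation of the one-component Ewens measure as an average of $\mathbb{K}$ against $PD(\theta_l)$. I do not expect a genuine obstacle; the work is essentially bookkeeping, and the only points requiring care are (i) packaging the heuristic ``frequencies restricted to class $l$ follow $PD(\theta_l)$, with class weights $D(\theta_1,\ldots,\theta_k)$'' as the precise distributional identity used to define $PD(\theta_1,\ldots,\theta_k)$, and (ii) invoking the $k=1$ connection in its monomial-symmetric-function form and checking that the combinatorial prefactor in the definition of $\mathbb{K}$ is exactly the one that makes the cancellation work.

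First I would rewrite the integral as an expectation over the construction of $PD(\theta_1,\ldots,\theta_k)$: take independent $y^{(1)}\sim PD(\theta_1),\ldots,y^{(k)}\sim PD(\theta_k)$, take $(\delta_1,\ldots,\delta_k)\sim D(\theta_1,\ldots,\theta_k)$ independent of all the $y^{(l)}$, and set $x^{(l)}=\delta_l y^{(l)}$; then $\sum_i x_i^{(l)}=\delta_l$ and $(x,\delta)$ indeed ranges over $\overline{\nabla}_0^{(k)}$ with law $PD(\theta_1,\ldots,\theta_k)$. Using the expression $\mathbb{K}(\Lambda_n^{(k)},x^{(1)},\ldots,x^{(k)})=\frac{n!}{\prod_{l=1}^k\prod_{j=1}^n(j!)^{m_j^{(l)}}}\,m_{\lambda^{(1)}}(x^{(1)})\cdots m_{\lambda^{(k)}}(x^{(k)})$ and the homogeneity $m_{\lambda^{(l)}}(\delta_l y^{(l)})=\delta_l^{|\lambda^{(l)}|}m_{\lambda^{(l)}}(y^{(l)})$ of the monomial symmetric functions, the integrand becomes
$$
\frac{n!}{\prod_{l=1}^k\prod_{j=1}^n(j!)^{m_j^{(l)}}}\left(\prod_{l=1}^k\delta_l^{|\lambda^{(l)}|}\right)\prod_{l=1}^k m_{\lambda^{(l)}}(y^{(l)}).
$$
All factors are nonnegative, so by Tonelli's theorem and the independence of $(\delta_1,\ldots,\delta_k)$ from $(y^{(1)},\ldots,y^{(k)})$ together with the mutual independence of the $y^{(l)}$, the integral equals
$$
\frac{n!}{\prod_{l=1}^k\prod_{j=1}^n(j!)^{m_j^{(l)}}}\;\mathbb{E}\!\left[\prod_{l=1}^k\delta_l^{|\lambda^{(l)}|}\right]\prod_{l=1}^k\mathbb{E}\!\left[m_{\lambda^{(l)}}(y^{(l)})\right].
$$

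Next I would evaluate the two expectations. Since $|\lambda^{(1)}|+\cdots+|\lambda^{(k)}|=n$, the Dirichlet moment identity gives $\mathbb{E}\big[\prod_l\delta_l^{|\lambda^{(l)}|}\big]=(\theta_1)_{|\lambda^{(1)}|}\cdots(\theta_k)_{|\lambda^{(k)}|}\big/(\theta_1+\cdots+\theta_k)_n$. For the remaining factor, the classical link between the Poisson-Dirichlet distribution and the Ewens sampling formula is exactly the $k=1$ instance of the construction above, $\int\mathbb{K}(\lambda,y)\,PD(\theta)(dy)=M^{\Ewens}_{\theta;|\lambda|}(\lambda)$ with $\mathbb{K}(\lambda,x)=\frac{|\lambda|!}{\prod_j(j!)^{m_j(\lambda)}}m_\lambda(x)$; solving for the expectation yields $\mathbb{E}\big[m_{\lambda^{(l)}}(y^{(l)})\big]=\frac{\prod_j(j!)^{m_j^{(l)}}}{|\lambda^{(l)}|!}\,M^{\Ewens}_{\theta_l;|\lambda^{(l)}|}(\lambda^{(l)})$ for each $l$.

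Finally I would substitute these back, observing that the product of $(j!)^{m_j^{(l)}}$ over $l$ cancels the prefactor $\prod_{l,j}(j!)^{m_j^{(l)}}$, so that the integral reduces to
$$
\frac{n!}{|\lambda^{(1)}|!\cdots|\lambda^{(k)}|!}\cdot\frac{(\theta_1)_{|\lambda^{(1)}|}\cdots(\theta_k)_{|\lambda^{(k)}|}}{(\theta_1+\cdots+\theta_k)_n}\prod_{l=1}^k M^{\Ewens}_{\theta_l;|\lambda^{(l)}|}(\lambda^{(l)}),
$$
which is precisely the expression (\ref{MultipleEwensMeasure}) for $M^{\Ewens}_{\theta_1,\ldots,\theta_k;n}(\Lambda_n^{(k)})$. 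This completes the argument, modulo the routine verification of the Dirichlet moment formula and of the identity $m_1^{(l)}+\sum_{j\ge 1}(j+1)m_{j+1}^{(l)}=|\lambda^{(l)}|$ that underlies the consistency already used in Proposition \ref{PropositionMEWENSMPS}.
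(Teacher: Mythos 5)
Your proposal is correct and follows essentially the same route as the paper: decompose $PD(\theta_1,\ldots,\theta_k)$ into independent $y^{(l)}\sim PD(\theta_l)$ scaled by Dirichlet weights, use homogeneity of the monomial symmetric functions, the Dirichlet moment formula, and Kingman's integral representation of the one-component Ewens measure, then cancel the combinatorial prefactors. The only differences are cosmetic (explicit invocation of Tonelli, and the closing aside about the identity $m_1^{(l)}+\sum_{j\ge1}(j+1)m_{j+1}^{(l)}=|\lambda^{(l)}|$, which is not actually needed here).
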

\begin{proof}The integration variables in the left-hand side of equation (\ref{R1.7}) can be represented as $x^{(l)}=\delta_ly^{(l)}$, $l=1,\ldots,k$.  The joint distribution of $\delta_1$, $\ldots$, $\delta_k$ is $D\left(\theta_1,\ldots,\theta_k\right)$,
and the distribution of $y^{(l)}$ is $PD(\theta_l)$. The left-hand side of equation (\ref{R1.7}) can be written as
\begin{equation}
\begin{split}
\frac{n!}{\prod_{l=1}^k\prod_j^n(j!)^{m_j^{(l)}}}
\int\limits_{\overline{\nabla}_0^{(1)}}\ldots
\int\limits_{\overline{\nabla}_0^{(1)}}
&\underset{\delta_1+\ldots+\delta_k=1}{\underset{\delta_1\geq 0,\ldots,\delta_k\geq 0}{\int\ldots\int}}
\left(\prod\limits_{l=1}^km_{\lambda^{(l)}}\left(\delta_ly^{(l)}\right)PD\left(\theta_l\right)(dy^{(l)})\right)\\
&\times D(\theta_1,\ldots,\theta_k)\left(d\delta_1\ldots d\delta_k\right).
\end{split}
\end{equation}
Note that
\begin{equation}
m_{\lambda^{(l)}}\left(\delta_ly^{(l)}\right)=\left(\delta_l\right)^{|\lambda^{(l)}|}m_{\lambda^{(l)}}\left(y^{(l)}\right),
\end{equation}
and that
\begin{equation}
\begin{split}
&\underset{\delta_1+\ldots+\delta_k=1}{\underset{\delta_1\geq 0,\ldots,\delta_k\geq 0}{\int\ldots\int}}
\delta_1^{|\lambda^{(1)}|}\ldots\delta_k^{|\lambda^{(k)}|}D(\theta_1,\ldots,\theta_k)\left(d\delta_1\ldots d\delta_k\right)=\frac{(\theta_1)_{|\lambda^{(1)}|}\ldots(\theta_k)_{|\lambda^{(k)}|}}{(\theta_1+\ldots+\theta_k)_n}.
\end{split}
\end{equation}
Also, the integral representation of the Ewens distribution $M_{\theta_l,n}^{\Ewens}$ is
\begin{equation}
\frac{|\lambda^{(l)}|!}{\prod_{j=1}^n(j!)^{m_j^{(l)}}}
\int\limits_{\overline{\nabla}_0^{(1)}}m_{\lambda^{(l)}}\left(y^{(l)}\right)PD(\theta_l)(dy^{(l)})=M_{\theta_l,|\lambda^{(l)}|}^{\Ewens}\left(\lambda^{(l)}\right),
\end{equation}
see Kingman \cite{Kingman1}.
Therefore, the left-hand side of equation (\ref{R1.7}) is equal to
$$
\frac{(\theta_1)_{|\lambda^{(1)}|}\ldots(\theta_k)_{|\lambda^{(k)}|}}{(\theta_1+\ldots+\theta_k)_n}
\frac{n!}{|\lambda^{(1)}|!\ldots|\lambda^{(k)}|!}
M_{\theta_1,|\lambda^{(1)}|}^{\Ewens}\left(\lambda^{(1)}\right)\ldots M_{\theta_k,|\lambda^{(k)}|}^{\Ewens}\left(\lambda^{(k)}\right),
$$
which is $M_{\theta_1,\ldots,\theta_k;n}^{\Ewens}\left(\Lambda_n^{(k)}\right)$, where $\Lambda_n^{(k)}=\left(\lambda^{(1)},\ldots,\lambda^{(k)}\right)$.
\end{proof}
Note that equation (\ref{R1.7}) provides an integral representation for the Ewens multiple partition structure
$\left(M^{\Ewens}_{\theta_1,\ldots,\theta_k;n}\right)_{n=1}^{\infty}$.
\begin{rem} The reader might note that the multiple Poisson-Dirichlet distribution
$PD(\theta_1,\ldots,\theta_k)$ is derived above by heuristic arguments. However, our proof in Section \ref{SectionProof}, equation (\ref{R1.7}), and Theorem 2.2 in Strahov \cite{StrahovMPS} establish rigorously that the equilibrium
measure for the frequencies $x_j^{(l)}$
is $PD\left(\theta_1,\ldots,\theta_k\right)$. Indeed, $\left(M_{\theta_1,\ldots,\theta_k;n}^{\Ewens}\right)_{n=1}^{\infty}$ is a multiple partition structure, and Theorem 2.2  in Strahov \cite{StrahovMPS} says that there is a unique probability
measure on $\overline{\nabla}_0^{(k)}$ for which equation (\ref{R1.7}) is satisfied.
\end{rem}
\section{Consistency of $M_{\theta_1,\ldots,\theta_k;n}^{\Ewens}$: the reduction to the  Ewens distribution on Young diagrams}\label{SectionConsistency}
The interpretation of $\theta_1$, $\ldots$, $\theta_k$ as parameters describing mutations in alleles of different classes dictates a consistency condition on
$M_{\theta_1,\ldots,\theta_k;n}^{\Ewens}$.  Assume that $M_{\theta_1,\ldots,\theta_k;n}^{\Ewens}$ defines the distribution of a random multiple partition describing the allelic composition of a sample of size $n$.
If experimental techniques do not enable distinguishing between $k$ different classes of alleles, then the allelic composition of a sample
should be described by a Young diagram with $n$ boxes distributed according to the usual Ewens formula with the parameter $\theta_1+\ldots+\theta_k$.
More precisely, let $\lambda^{(1)}$, $\ldots$, $\lambda^{(k)}$ be Young diagrams.
Write $\lambda^{(1)}$, $\ldots$, $\lambda^{(k)}$ as
\begin{equation}
\begin{split}
&\lambda^{(1)}=1^{m_1^{(1)}}2^{m_2^{(1)}}\ldots n^{m_n^{(1)}},\\
&\vdots\\
&\lambda^{(k)}=1^{m_1^{(k)}}2^{m_2^{(k)}}\ldots n^{m_n^{(k)}},
\end{split}
\end{equation}
where $n=|\lambda^{(1)}|+\ldots+|\lambda^{(k)}|$. The union
$\lambda^{(1)}\cup\ldots\cup\lambda^{(k)}$ of $\lambda^{(1)}$, $\ldots$, $\lambda^{(k)}$ is defined by
$$
\lambda^{(1)}\cup\ldots\cup\lambda^{(k)}=
1^{m_1^{(1)}+\ldots+m_1^{(k)}}2^{m_2^{(1)}+\ldots+m_2^{(k)}}\ldots n^{m_n^{(1)}+\ldots+m_n^{(k)}}.
$$
In words, the Young diagram $\lambda^{(1)}\cup\ldots\cup\lambda^{(k)}$ is obtained from $\lambda^{(1)}$, $\ldots$, $\lambda^{(k)}$ by combining together all rows of $\lambda^{(1)}$, $\ldots$, $\lambda^{(k)}$.
With this notation,
the following proposition should be valid.
\begin{prop}
Let $M_{\theta_1,\ldots,\theta_k;n}^{\Ewens}$ be the probability
measure on $\Y_n^{(k)}$ defined by equation (\ref{MultipleEwensMeasure}).
Then we have
\begin{equation}\label{7.2.1}
\underset{\lambda^{(1)}\cup\ldots\cup\lambda^{(k)}=\lambda}{\sum\limits_{\Lambda_n^{(k)}
=\left(\lambda^{(1)},\ldots,\lambda^{(k)}\right)}}
M_{\theta_1,\ldots,\theta_k;n}^{\Ewens}\left(\Lambda_n^{(k)}\right)=
M_{\theta_1+\ldots+\theta_k;n}^{\Ewens}\left(\lambda\right),
\end{equation}
where $M_{\theta;n}^{\Ewens}\left(\lambda\right)$ is defined by equation (\ref{EwensMeasure}).
\end{prop}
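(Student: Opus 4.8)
The plan is to prove \eqref{7.2.1} by a direct computation that reorganizes the sum on the left according to how the ``merge'' $\lambda^{(1)}\cup\ldots\cup\lambda^{(k)}=\lambda$ distributes each row of $\lambda$ among the $k$ components. Fix $\lambda=\left(1^{m_1}2^{m_2}\ldots n^{m_n}\right)$, so that $m_j = m_j^{(1)}+\ldots+m_j^{(k)}$ must hold for every admissible summand. First I would use the multiplicative form \eqref{MEM} of $M_{\theta_1,\ldots,\theta_k;n}^{\Ewens}$, namely
\begin{equation*}
M_{\theta_1,\ldots,\theta_k;n}^{\Ewens}\left(\Lambda_n^{(k)}\right)=
\frac{n!}{\left(\theta_1+\ldots+\theta_k\right)_n}\prod\limits_{j=1}^n\prod\limits_{l=1}^k
\frac{1}{j^{m_j^{(l)}}}\,\frac{\theta_l^{m_j^{(l)}}}{m_j^{(l)}!},
\end{equation*}
and observe that for each fixed $j$ the choices of $\left(m_j^{(1)},\ldots,m_j^{(k)}\right)$ with $\sum_l m_j^{(l)}=m_j$ are made independently across the different values of $j$. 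Hence the left-hand side of \eqref{7.2.1} factors as
\begin{equation*}
\frac{n!}{\left(\theta_1+\ldots+\theta_k\right)_n}\prod\limits_{j=1}^n\frac{1}{j^{m_j}}
\sum\limits_{m_j^{(1)}+\ldots+m_j^{(k)}=m_j}\prod\limits_{l=1}^k\frac{\theta_l^{m_j^{(l)}}}{m_j^{(l)}!}.
\end{equation*}

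Second, I would apply the multinomial theorem to the inner sum, which gives
\begin{equation*}
\sum\limits_{m_j^{(1)}+\ldots+m_j^{(k)}=m_j}\prod\limits_{l=1}^k\frac{\theta_l^{m_j^{(l)}}}{m_j^{(l)}!}
=\frac{\left(\theta_1+\ldots+\theta_k\right)^{m_j}}{m_j!}.
\end{equation*}
Substituting this back, the left-hand side of \eqref{7.2.1} becomes
\begin{equation*}
\frac{n!}{\left(\theta_1+\ldots+\theta_k\right)_n}\prod\limits_{j=1}^n\frac{1}{j^{m_j}}\,
\frac{\left(\theta_1+\ldots+\theta_k\right)^{m_j}}{m_j!}
=\frac{n!}{\prod_{j=1}^n j^{m_j}m_j!}\,\frac{\left(\theta_1+\ldots+\theta_k\right)^{\,l(\lambda)}}{\left(\theta_1+\ldots+\theta_k\right)_n},
\end{equation*}
using $\sum_j m_j = l(\lambda)$. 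Comparing with \eqref{EwensMeasure} for the parameter $\theta_1+\ldots+\theta_k$, this is exactly $M_{\theta_1+\ldots+\theta_k;n}^{\Ewens}(\lambda)$, which proves the claim. One small bookkeeping point to check is that every summand appearing on the left genuinely corresponds to a multiple partition in $\Y_n^{(k)}$, i.e. that $|\lambda^{(1)}|+\ldots+|\lambda^{(k)}|=n$ is automatic from $\lambda^{(1)}\cup\ldots\cup\lambda^{(k)}=\lambda$ and $|\lambda|=n$; this is immediate since merging rows preserves the total number of boxes.

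I do not anticipate a serious obstacle here: the only mild subtlety is recognizing that the constraint $\lambda^{(1)}\cup\ldots\cup\lambda^{(k)}=\lambda$ decouples into independent constraints $\sum_l m_j^{(l)}=m_j$ for each row length $j$, which is what makes the product factor. Once that is seen, the computation is just the multinomial theorem applied row-length by row-length, together with the identity $\sum_j m_j^{(l)} = l(\lambda^{(l)})$ summed over $l$. An alternative, more conceptual route would be to invoke \eqref{MultipleEwensMeasure}, which already expresses $M_{\theta_1,\ldots,\theta_k;n}^{\Ewens}$ in terms of the single-component Ewens measures, and then use the known fact that superimposing independent Ewens partitions with parameters $\theta_l$ (run as Chinese restaurant processes) yields an Ewens partition with parameter $\sum_l\theta_l$; but the bare-hands multinomial argument above is shorter and self-contained.
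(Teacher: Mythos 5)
Your proof is correct and is essentially identical to the paper's: both write the left-hand side via the multiplicative form of $M_{\theta_1,\ldots,\theta_k;n}^{\Ewens}$, decouple the constraint $\lambda^{(1)}\cup\ldots\cup\lambda^{(k)}=\lambda$ into independent conditions $\sum_l m_j^{(l)}=m_j$ for each row length $j$, and apply the multinomial theorem factor by factor. No substantive differences.
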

\begin{proof} Writing the left-hand side of equation (\ref{7.2.1})
explicitly we obtain
\begin{equation}
\begin{split}
&\frac{n!}{\left(\theta_1+\ldots+\theta_k\right)_n}
\underset{m_n^{(1)}+\ldots+m_n^{(k)}=m_n}{\underset{\vdots}{\sum\limits_{m_1^{(1)}+\ldots+m_1^{(k)}=m_1}}}
\frac{\theta_1^{m_1^{(1)}+\ldots+m_n^{(1)}}\ldots
\theta_k^{m_1^{(k)}+\ldots+m_n^{(k)}}}{\left(\prod\limits_{j=1}^n
j^{m_j^{(1)}}m_j^{(1)}!\right)\ldots
\left(\prod\limits_{j=1}^n
j^{m_j^{(k)}}m_j^{(k)}!\right)}\\
&=\frac{n!}{\left(\theta_1+\ldots+\theta_k\right)_n\prod\limits_{j=1}^nj^{m_j}}
\left(\sum\limits_{m_1^{(1)}+\ldots+m_1^{(k)}=m_1}
\frac{\theta_1^{m_1^{(1)}}\ldots\theta_k^{m_1^{(k)}} }{m_1^{(1)}!\ldots m_1^{(k)}!}\right)
\ldots
\left(\sum\limits_{m_n^{(1)}+\ldots+m_n^{(k)}=m_n}
\frac{\theta_1^{m_n^{(1)}}\ldots\theta_k^{m_n^{(k)}} }{m_n^{(1)}!\ldots m_n^{(k)}!}\right)\\
&=\frac{n!}{\left(\theta_1+\ldots+\theta_k\right)_n}
\frac{\left(\theta_1+\ldots+\theta_k\right)^{m_1+\ldots+m_n}}{\prod\limits_{j=1}^nj^{m_j}m_j!}.
\end{split}
\nonumber
\end{equation}
The last expression is $M_{\theta_1+\ldots+\theta_k; n}^{\Ewens}\left(\lambda\right)$, see equation (\ref{EwensMeasure}).
\end{proof}

%%%%%%%%%%%%%%%%%%%%%%%%%%%%%%%%%%%%%%%%
%%%%%%%%%%%%%%%%%%%%%%%%%%%%%%%%%%%%%%%%%%%%%%%%%
%%%%%%%%%%%%%%%%%%%%%%%%%%%%%%%%%%%%%%%%%%%%%%%%%%%%%%%%%%%%%%%%%%%
\section{The refined Ewens sampling formula as a distribution on a wreath product. The Chinese restaurant process for wreath products}\label{SectionChineseMain}
\subsection{The refined Ewens sampling formula as a distribution on $G\sim S(n)$}\label{SectionChinese}
The Ewens sampling formula (equation (\ref{ESF})) is closely related to the Ewens measure $P_{\theta,n}^{\Ewens}$ on the symmetric group $S(n)$. This measure is defined by
$$
P_{\theta,n}^{\Ewens}(s)=\frac{\theta^{l(s)}}{\theta(\theta+1)\ldots (\theta+n-1)},\;\; s\in S(n),
$$
where $\theta>0$ is a parameter and $l(s)$ is the number of cycles in $s$. The link between $P_{\theta,n}^{\Ewens}$ and formula (\ref{ESF}) is simple:
to a permutation $s\in S(n)$ assign a Young diagram $\lambda$ with $n$ boxes whose rows represent the cycle-lengths of $s$. The projection $s\rightarrow\lambda$ takes the probability measure $P_{\theta,n}^{\Ewens}$ to a probability measure $M_{\theta;n}^{\Ewens}$ on $\Y_n$.  If $\lambda=\left(1^{a_1}2^{a_2}\ldots n^{a_n}\right)$, then
$M_{\theta,n}^{\Ewens}$ is defined by the right-hand side of equation (\ref{ESF}).

In this Section we relate the refined Ewens sampling formula (equation (\ref{RefinedESF})) with a probability measure
on the wreath product $G\sim S(n)$.
Let $G$ be a finite group. Denote by $G_{\ast}$ the set of conjugacy classes in $G$, and
 assume that $G_{\ast}$ consists of $k$ conjugacy classes, namely $G_{\ast}=\left\{c_1,\ldots,c_k\right\}$.
Let $S(n)$ be the symmetric group of degree $n$, i.e.
the permutation group of $\left\{1,2,\ldots,n\right\}$.
The \textit{wreath product} $G\sim S(n)$ is a group whose underlying set is
$$
G^n\times S(n)=\left\{\left(\left(g_1,\ldots,g_n\right),s\right):\;g_i\in G, s\in S(n)\right\}.
$$
The multiplication in $G\sim S(n)$ is defined by
$$
\left((g_1,\ldots,g_n),s\right)\left((h_1,\ldots,h_n),t\right)=
\left((g_1h_{s^{-1}(1)},\ldots,g_nh_{s^{-1}(n)}),st\right).
$$
When $n=1$, $G\sim S(1)$ is $G$. The number of elements in $G\sim S(n)$ is equal to $|G|^nn!$.

Let
\begin{equation}\label{Rest1}
x=\left(\left(g_1,\ldots,g_n\right),s\right)\in G\sim S(n).
\end{equation}
The permutation $s$ can be written as a product of disjoint cycles. If
$(i_1,i_2,\ldots,i_r)$ is one of these cycles, then the element
$g_{i_{r}}g_{i_{r-1}}\ldots g_{i_1}$ is called \textit{the cycle-product of $(i_1,i_2,\ldots,i_r)$}.
If $c_l$ is a conjugacy class in $G$, then
a cycle $(i_1,\ldots,i_r)$  of $s$ is called \textit{of type $c_l$}
if its cycle-product   belongs to $c_l$.  We denote by $[x]_{c_l}$
the number of cycles of $s$ whose cycle-products belong to $c_l$.

Both the conjugacy classes and the irreducible representations of $G\sim S(n)$ are parameterized by multiple partitions $\Lambda_n^{(k)}=\left(\lambda^{(1)},\ldots,\lambda^{(k)}\right)$,
where $k$ is the number of conjugacy classes in $G$, and where $|\lambda^{(1)}|+\ldots+|\lambda^{(k)}|=n$, see, for example, Macdonald
\cite{Macdonald}, Appendix B. In particular, if
$x=\left(\left(g_1,\ldots,g_n\right),s\right)$ belongs to the conjugacy class
 of $G\sim S(n)$  parameterized by $\Lambda_n^{(k)}=\left(\lambda^{(1)},\ldots,\lambda^{(k)}\right)$, then
$$
\lambda^{(l)}=\left(1^{m_1^{(l)}(s)}2^{m_2^{(l)}(s)}\ldots n^{m_n^{(l)}(s)}\right),
$$
where $m_j^{(l)}(s)$ is equal to the number of cycles of length $j$, and of type $c_l$ in $s$.

To an element $x$ of $G\sim S(n)$ assign a multiple partition
$\Lambda_n^{(k)}$ that labels the conjugacy class of $x$. The projection
$$
x\longrightarrow\Lambda_n^{(k)}=\left(\lambda^{(1)},\ldots,\lambda^{(k)}\right)
$$
takes any central probability measure $P_n^{(k)}$ on $G\sim S(n)$
(central measures are those invariant under the conjugation action of $G\sim S(n)$) to a probability measure $M_n^{(k)}$
on the set  $\Y_n^{(k)}$ of all multiple partitions of $n$ into $k$ components.
This establishes a one-to-one correspondence
\begin{equation}\label{CorrespondenceMeasures}
P_n^{(k)}\longleftrightarrow M_n^{(k)}
\end{equation}
between arbitrary central probability measures on $G\sim S(n)$, and arbitrary probability measures on $\Y_n^{(k)}$.
\begin{prop}\label{PropositionCorMeasures}
Under correspondence (\ref{CorrespondenceMeasures}), the probability measure $M^{\Ewens}_{\theta_1,\ldots,\theta_k}$
defined by equation (\ref{MEM})
turns into a probability measure
\begin{equation}\label{PEwens}
P_{t_1,\ldots,t_k;n}^{\Ewens}(x)=\frac{t_1^{[x]_{c_1}}t_2^{[x]_{c_2}}\ldots t_k^{[x]_{c_k}}}{|G|^n\left(\frac{t_1}{\zeta_{c_1}}+\ldots+\frac{t_k}{\zeta_{c_k}}\right)_n}
\end{equation}
on $G\sim S(n)$. Here
$$
\zeta_{c_l}=\frac{|G|}{|c_l|},\;\; t_l=\frac{|G|}{|c_l|}\theta_l,\;\; l=1,\ldots,k.
$$
\end{prop}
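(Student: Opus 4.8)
The plan is to exploit the fact that the bijection (\ref{CorrespondenceMeasures}) sends a probability measure $M_n^{(k)}$ on $\Y_n^{(k)}$ to the \emph{unique} central probability measure on $G\sim S(n)$ whose pushforward under $x\mapsto\Lambda_n^{(k)}(x)$ is $M_n^{(k)}$. Since a central measure is constant on conjugacy classes, this measure is given by
\begin{equation*}
P_n^{(k)}(x)=\frac{M_n^{(k)}\bigl(\Lambda_n^{(k)}\bigr)}{\bigl|C_{\Lambda_n^{(k)}}\bigr|},\qquad x\in C_{\Lambda_n^{(k)}},
\end{equation*}
where $C_{\Lambda_n^{(k)}}$ denotes the conjugacy class of $G\sim S(n)$ labeled by $\Lambda_n^{(k)}$. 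Thus the proposition reduces to inserting the explicit formula (\ref{MEM}) for $M^{\Ewens}_{\theta_1,\ldots,\theta_k;n}\bigl(\Lambda_n^{(k)}\bigr)$ together with the explicit cardinality of $C_{\Lambda_n^{(k)}}$, and checking that all combinatorial prefactors cancel.

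First I would recall the description of conjugacy classes of wreath products from Macdonald \cite{Macdonald}, Appendix~B. If $x=\bigl((g_1,\ldots,g_n),s\bigr)$ lies in the class labeled by $\Lambda_n^{(k)}=\bigl(\lambda^{(1)},\ldots,\lambda^{(k)}\bigr)$ with $\lambda^{(l)}=\bigl(1^{m_1^{(l)}}2^{m_2^{(l)}}\ldots\bigr)$, then the centralizer of $x$ in $G\sim S(n)$ has order
\begin{equation*}
z_{\Lambda_n^{(k)}}=\prod_{l=1}^k\Biggl(\zeta_{c_l}^{\,[x]_{c_l}}\prod_{j\geq1}j^{m_j^{(l)}}\,m_j^{(l)}!\Biggr),
\end{equation*}
where $[x]_{c_l}=\sum_{j\geq1}m_j^{(l)}$ is the number of cycles of $s$ of type $c_l$ and $\zeta_{c_l}=|G|/|c_l|$; consequently $\bigl|C_{\Lambda_n^{(k)}}\bigr|=|G|^n\,n!\,/\,z_{\Lambda_n^{(k)}}$. (One sanity check: for $G$ trivial this is the usual $z_\lambda=\prod_j j^{m_j}m_j!$ in $S(n)$, and for $n=1$ it recovers $|c_l|$ as the size of a conjugacy class of $G$.)

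Substituting and using (\ref{MEM}) then gives
\begin{equation*}
P_n^{(k)}(x)=\frac{z_{\Lambda_n^{(k)}}}{|G|^n\,n!}\cdot\frac{n!}{\bigl(\theta_1+\ldots+\theta_k\bigr)_n}\prod_{l=1}^k\prod_{j=1}^n\frac{\theta_l^{m_j^{(l)}}}{j^{m_j^{(l)}}\,m_j^{(l)}!}.
\end{equation*}
The factor $\prod_{l,j}j^{m_j^{(l)}}m_j^{(l)}!$ coming from $z_{\Lambda_n^{(k)}}$ cancels the denominators in the product, and $\prod_{l,j}\theta_l^{m_j^{(l)}}=\prod_l\theta_l^{[x]_{c_l}}$, so the numerator collapses to $\prod_{l=1}^k\bigl(\zeta_{c_l}\theta_l\bigr)^{[x]_{c_l}}=\prod_{l=1}^k t_l^{[x]_{c_l}}$ by the definition $t_l=\zeta_{c_l}\theta_l$; finally $\bigl(\theta_1+\ldots+\theta_k\bigr)_n=\bigl(t_1/\zeta_{c_1}+\ldots+t_k/\zeta_{c_k}\bigr)_n$ since $\theta_l=t_l/\zeta_{c_l}$, and this is exactly (\ref{PEwens}). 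The only real obstacle is the bookkeeping in the second step: one must get the centralizer order of an element of $G\sim S(n)$ exactly right, in particular the exponent $[x]_{c_l}$ on $\zeta_{c_l}$ (each cycle of type $c_l$ contributes one extra factor $\zeta_{c_l}$ beyond the $S(n)$-contribution $j^{m_j}m_j!$); once that is in place the cancellation is forced and the identity follows.
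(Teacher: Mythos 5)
Your proof is correct. The paper itself gives no argument here --- it simply cites Strahov [StrahovMPS, Section 4] --- so your write-up supplies the missing computation, and it is the natural one: since a central measure is uniform on each conjugacy class, the pullback of $M^{\Ewens}_{\theta_1,\ldots,\theta_k;n}$ under the projection $x\mapsto\Lambda_n^{(k)}$ is $M^{\Ewens}_{\theta_1,\ldots,\theta_k;n}(\Lambda_n^{(k)})/|C_{\Lambda_n^{(k)}}|$, and everything hinges on the centralizer order in $G\sim S(n)$. Your formula
$z_{\Lambda_n^{(k)}}=\prod_{l}\zeta_{c_l}^{[x]_{c_l}}\prod_{j}j^{m_j^{(l)}}m_j^{(l)}!$
is indeed Macdonald's (Appendix B), with $[x]_{c_l}=\sum_j m_j^{(l)}=l(\lambda^{(l)})$, and your two sanity checks ($G$ trivial, $n=1$) confirm the exponent on $\zeta_{c_l}$. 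The cancellation then goes through exactly as you say: $z_{\Lambda_n^{(k)}}$ kills the $j^{m_j^{(l)}}m_j^{(l)}!$ denominators, the surviving numerator is $\prod_l(\zeta_{c_l}\theta_l)^{[x]_{c_l}}=\prod_l t_l^{[x]_{c_l}}$, and $(\theta_1+\ldots+\theta_k)_n=(t_1/\zeta_{c_1}+\ldots+t_k/\zeta_{c_k})_n$, giving (\ref{PEwens}). As a further consistency check, $|G|^n\left(\frac{t_1}{\zeta_{c_1}}+\ldots+\frac{t_k}{\zeta_{c_k}}\right)_n=\prod_{i=0}^{n-1}\left(|c_1|t_1+\ldots+|c_k|t_k+i|G|\right)$, which matches the denominator used later in Section \ref{SectionChinese2}. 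No gaps.
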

\begin{proof} See Strahov \cite{StrahovMPS},
Section 4.
\end{proof}
\begin{rem} The probability measure $P_{t_1,\ldots,t_k;n}^{\Ewens}$ was used in Strahov
\cite{StrahovGRR} to construct generalized regular representations of big wreath products
$G\sim S(\infty)$, and to develop a harmonic analysis on $G\sim S(\infty)$.
\end{rem}
%%%%%%%%%%%%%%%%%%%%%%%%%%%%%%%%%%%%%%%%%%%%%%%%%%%%%%%%
%%%%%%%%%%%%%%%%%%%%%%%%%%%%%%%%%%%%%%%%%%%%%%%%%%%%%%%%%
\subsection{An analogue of the Chinese restaurant process}
\label{SectionChinese2}

Our aim here is to construct a process that will generate elements of $G\sim S(n)$ with the distribution defined by equation (\ref{PEwens}).
This process will be an analogue of the Chinese
restaurant process \cite{Aldous}, which is well known in the context of
the symmetric group.

On the first step of the process one
obtains an element $x=(g,(1))\in G\sim S(1)$ with probability
$$
P^{\Ewens}_{t_1,\ldots,t_k; n=1}(x)
=
\left\{
  \begin{array}{ll}
    \frac{t_1}{|c_1|t_1+\ldots+|c_k|t_k}, & g\in c_1, \\
    \vdots &  \\
    \frac{t_k}{|c_1|t_1+\ldots+|c_k|t_k}, & g\in c_k.
  \end{array}
\right.
$$
Next, let us describe how an element $x'\in G\sim S(n+1)$ is obtained from a given element $x\in G\sim S(n)$
represented  as in equation (\ref{Rest1}).
Write $s\in S(n)$ as a union of disjoint cycles,
\begin{equation}\label{8.2.2}
s=\left(i_1i_2\ldots i_{m_1}\right)\left(j_1j_2\ldots j_{m_2}\right)
\ldots\left(\nu_1\nu_2\ldots \nu_{m_p}\right),
\end{equation}
and represent the elements $(g_1, \ldots, g_n)$ in (\ref{Rest1}) as a collection of ordered lists (where each list corresponds to a relevant cycle of $s$)
\begin{equation}\label{8.2.3}
\left(g_{i_1},g_{i_2},\ldots, g_{i_{m_1}}\right)\left(g_{j_1},g_{j_2},\ldots, g_{j_{m_2}}\right)
\ldots\left(g_{\nu_1},g_{\nu_2},\ldots, g_{\nu_{m_p}}\right).
\end{equation}
Clearly, $x\in G\sim S(n)$ represented as in (\ref{Rest1}) determines
(\ref{8.2.2}), (\ref{8.2.3}) uniquely. Conversely,
collection (\ref{8.2.2}) of cycles, and collection (\ref{8.2.3})
of ordered lists of elements of $G$ gives us unambiguously
an element $x$ of $G\sim S(n)$.

The process is such that an element $x\in G\sim S(n)$ can be lifted to an element
of $G\sim S(n+1)$ in two ways. First, we can insert $n+1$ to an existing cycle of $s$,  and  change the corresponding lists
of elements of $G$ accordingly. If
$$
\left(i_1i_2\ldots i_si_{s+1}\ldots i_{m_1}\right)
\rightarrow\left(i_1i_2\ldots i_s\; n+1\; i_{s+1}\ldots i_{m_1}\right),
$$
then we replace
\begin{equation}\label{Element1}
\left(g_{i_1},g_{i_2},\ldots,g_{i_s},g_{i_{s+1}},\ldots g_{i_{m_1}}\right)
\end{equation}
by
\begin{equation}\label{Element2}
\left(g_{i_1},g_{i_2},\ldots,g_{i_s}\left(g_{n+1}\right)^{-1}, g_{n+1},g_{i_{s+1}},\ldots g_{i_{m_1}}\right).
\end{equation}
In this way a new element $x'\in G\sim S(n+1)$ will be obtained.
Note that the condition
 $[x']_{c_l}=[x]_{c_l}$ is satisfied for $l=1,\ldots,k$ in this case. The probability that $x'\in G\sim S(n+1)$ is obtained
 from a given $x\in G\sim S(n)$ as described above is assigned to be
 equal to
$$
\frac{1}{|c_1|t_1+\ldots+|c_k|t_k+n|G|}.
$$
Second, the integer $n+1$ can start a new cycle. In this case
the permutation $s$ is replaced by
\begin{equation}\label{sprime}
s'=\left(i_1i_2\ldots i_{m_1}\right)\left(j_1j_2\ldots j_{m_2}\right)
\ldots\left(\nu_1\nu_2\ldots \nu_{m_p}\right)(n+1)\in S(n+1).
\end{equation}
In words, $s'\in S(n+1)$ is obtained from $s\in S(n)$ by attaching the new cycle $(n+1)$. The collection of ordered lists (\ref{8.2.3}) is replaced by
\begin{equation}\label{newcollection}
\left(g_{i_1},g_{i_2},\ldots, g_{i_{m_1}}\right)\left(g_{j_1},g_{j_2},\ldots, g_{j_{m_2}}\right)
\ldots\left(g_{\nu_1},g_{\nu_2},\ldots, g_{\nu_{m_p}}\right)(g_{n+1}).
\end{equation}
Expressions (\ref{sprime}) and (\ref{newcollection}) determine an element $x'\in G\sim S(n+1)$.
If $g_{n+1}\in c_l$, then $[x']_{c_l}=[x]_{c_l}+1$, and the probability of the event that $x'$ is obtained from $x$ is assigned to be equal to
$$
\frac{t_l}{|c_1|t_1+\ldots+|c_k|t_k+n|G|}.
$$
If $G$ contains the unit element only, then the wreath product $G\sim S(n)$ is isomorphic to the symmetric group $S(n)$, and the process described above coincides with the Chinese restaurant process.  For this reason we will refer to the process described above as to the \textit{generalized
Chinese restaurant process}.
\begin{prop}
The generalized
Chinese restaurant process generates the distribution on $G\sim S(n)$ given by formula (\ref{PEwens}).
\end{prop}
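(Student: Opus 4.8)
The plan is to prove the proposition by induction on $n$, matching the one-step transition probabilities of the generalized Chinese restaurant process with the quotients of the right-hand sides of (\ref{PEwens}). For $n=1$ the process outputs $x=(g,(1))$ with $g\in c_l$ with probability $t_l/(|c_1|t_1+\cdots+|c_k|t_k)$; on the other hand, for such $x$ the numerator of (\ref{PEwens}) at $n=1$ is $t_l$ (since $[x]_{c_l}=1$ and $[x]_{c_j}=0$ for $j\neq l$) and the denominator is $|G|\bigl(\tfrac{t_1}{\zeta_{c_1}}+\cdots+\tfrac{t_k}{\zeta_{c_k}}\bigr)=|c_1|t_1+\cdots+|c_k|t_k$, using $|G|/\zeta_{c_l}=|c_l|$, so the base case holds.

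For the inductive step, assume the stage-$n$ output of the process has law $P^{\Ewens}_{t_1,\ldots,t_k;n}$, fix $x'\in G\sim S(n+1)$, and express the probability that the stage-$(n+1)$ output equals $x'$ as $\sum_x P^{\Ewens}_{t_1,\ldots,t_k;n}(x)\,q(x\to x')$, the sum running over $x\in G\sim S(n)$, where $q$ is the transition kernel of the process. The first ingredient is a unique-predecessor lemma: erasing the letter $n+1$ from $x'$ --- and, when $n+1$ sits inside a cycle of $x'$ of length $\geq 2$, undoing the substitution $g_{i_s}\mapsto g_{i_s}(g_{n+1})^{-1}$ of (\ref{Element1})--(\ref{Element2}) on the $G$-entry of the letter immediately preceding $n+1$ --- produces a well-defined $x\in G\sim S(n)$ and exactly one move of the process carrying $x$ to $x'$. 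The two cases ``$n+1$ lies in a cycle of $x'$ of length $\geq 2$'' and ``$(n+1)$ is a singleton cycle of $x'$'' are mutually exclusive and exhaustive, and correspond respectively to the ``insert into an existing cycle'' and ``start a new cycle'' alternatives of the process; hence the displayed sum reduces to the single term $P^{\Ewens}_{t_1,\ldots,t_k;n}(x)\,q(x\to x')$.

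The second ingredient records $q(x\to x')$ and the effect of the move on the counts $[\,\cdot\,]_{c_l}$; put $D_n:=|c_1|t_1+\cdots+|c_k|t_k+n|G|$. If $x'$ arises by insertion, then $q(x\to x')=1/D_n$, and the substitution rule is designed precisely so that the cycle-product of the enlarged cycle equals that of the original cycle (the inserted factor $g_{n+1}$ is cancelled by the compensating $(g_{n+1})^{-1}$), so every cycle-product conjugacy class is unchanged and $[x']_{c_l}=[x]_{c_l}$ for all $l$. If $x'$ arises by starting a new singleton cycle whose $G$-entry $g_{n+1}$ lies in $c_l$, then $q(x\to x')=t_l/D_n$, and the new cycle contributes the cycle-product $g_{n+1}\in c_l$, so $[x']_{c_l}=[x]_{c_l}+1$ while $[x']_{c_j}=[x]_{c_j}$ for $j\neq l$. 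In either case, using $|G|^{n+1}=|G|\cdot|G|^n$, the Pochhammer identity $(\alpha)_{n+1}=(\alpha)_n(\alpha+n)$ with $\alpha=\tfrac{t_1}{\zeta_{c_1}}+\cdots+\tfrac{t_k}{\zeta_{c_k}}$, and the identity $|G|(\alpha+n)=D_n$, a direct computation gives $P^{\Ewens}_{t_1,\ldots,t_k;n}(x)\,q(x\to x')=P^{\Ewens}_{t_1,\ldots,t_k;n+1}(x')$, which closes the induction.

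The most laborious step should be the unique-predecessor lemma together with the claim that the list modification (\ref{Element1})--(\ref{Element2}) preserves all cycle-product conjugacy classes: verifying these forces one to unwind the definitions of the cycle-product and of the product in $G\sim S(n)$ and to track carefully how the inserted entry interacts with its neighbours inside a cycle. A cleaner alternative, which also serves as a consistency check, is to observe that the projection $x\mapsto\Lambda_n^{(k)}$ carries the generalized Chinese restaurant process onto precisely the generalized Hoppe urn process of Section \ref{Section3.2} --- a length-$r$ cycle of type $c_l$ playing the role of a colour of object-class $l$ occurring $r$ times, with the $n|G|$ insertion moves and the $|c_l|$ ``new colour $g_{n+1}\in c_l$'' moves collapsing onto the mass-$r$ and mass-$\theta_l$ draws because $D_n=|G|(\theta_1+\cdots+\theta_k+n)$ --- so that Theorem \ref{TheoremGeneralizedHoope} identifies the marginal on $\Y_n^{(k)}$ with $M^{\Ewens}_{\theta_1,\ldots,\theta_k;n}$; one then checks that the output law is invariant under conjugation in $G\sim S(n)$, hence equals the unique central measure lifting $M^{\Ewens}_{\theta_1,\ldots,\theta_k;n}$ under correspondence (\ref{CorrespondenceMeasures}), which by Proposition \ref{PropositionCorMeasures} is $P^{\Ewens}_{t_1,\ldots,t_k;n}$.
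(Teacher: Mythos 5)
Your proposal is correct and follows essentially the paper's own route: the paper simply rewrites (\ref{PEwens}) in the telescoped form with denominator $\prod_{j=0}^{n-1}\bigl(|c_1|t_1+\cdots+|c_k|t_k+j|G|\bigr)$ and declares the verification straightforward, and your induction (base case, unique-predecessor lemma, matching of the transition weights $1/D_n$ and $t_l/D_n$ against the Pochhammer recursion) is exactly that verification spelled out. The one point where your justification is too quick is the parenthetical claim that the inserted $g_{n+1}$ is ``cancelled'' by the compensating $(g_{n+1})^{-1}$: with the cycle-product read as $g_{i_r}g_{i_{r-1}}\cdots g_{i_1}$ the substitution (\ref{Element1})--(\ref{Element2}) produces the factor $g_{n+1}\,g_{i_s}\,g_{n+1}^{-1}$ inside the product, which is a conjugation of the single letter $g_{i_s}$ rather than a cancellation and does not preserve the conjugacy class of the full product in a nonabelian $G$; the cancellation is literal only if the compensating factor is placed as $g_{n+1}^{-1}g_{i_s}$ (equivalently, if the cycle-product is taken in the order $g_{i_1}\cdots g_{i_r}$), so you should fix one consistent convention before asserting $[x']_{c_l}=[x]_{c_l}$. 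Your alternative route via projection onto the generalized Hoppe urn is a nice consistency check, but as a standalone proof it would still need an independent argument that the output law is central, so the induction remains the real proof.
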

\begin{proof} The probability measure
$P_{t_1,\ldots,t_k;n}^{\Ewens}$
on $G\sim S(n)$ defined by equation (\ref{PEwens}) can be represented as
\begin{equation}\label{8.2.1}
\begin{split}
&P_{t_1,\ldots,t_k;n}^{\Ewens}(x)\\
&=\frac{t_1^{[x]_{c_1}}t_2^{[x]_{c_2}}\ldots t_k^{[x]_{c_k}}}
{\left(|c_1|t_1+\ldots+|c_k|t_k\right)
\left(|c_1|t_1+\ldots+|c_k|t_k+|G|\right)
\ldots\left(|c_1|t_1+\ldots+|c_k|t_k+(n-1)|G|\right)}.
\end{split}
\nonumber
\end{equation}
It is straightforward to check that the generalized
Chinese restaurant process described above generates the distribution on $G\sim S(n)$ given by the  above formula.
\end{proof}
\begin{figure}[t]
\includegraphics[width=12cm,height=10cm]{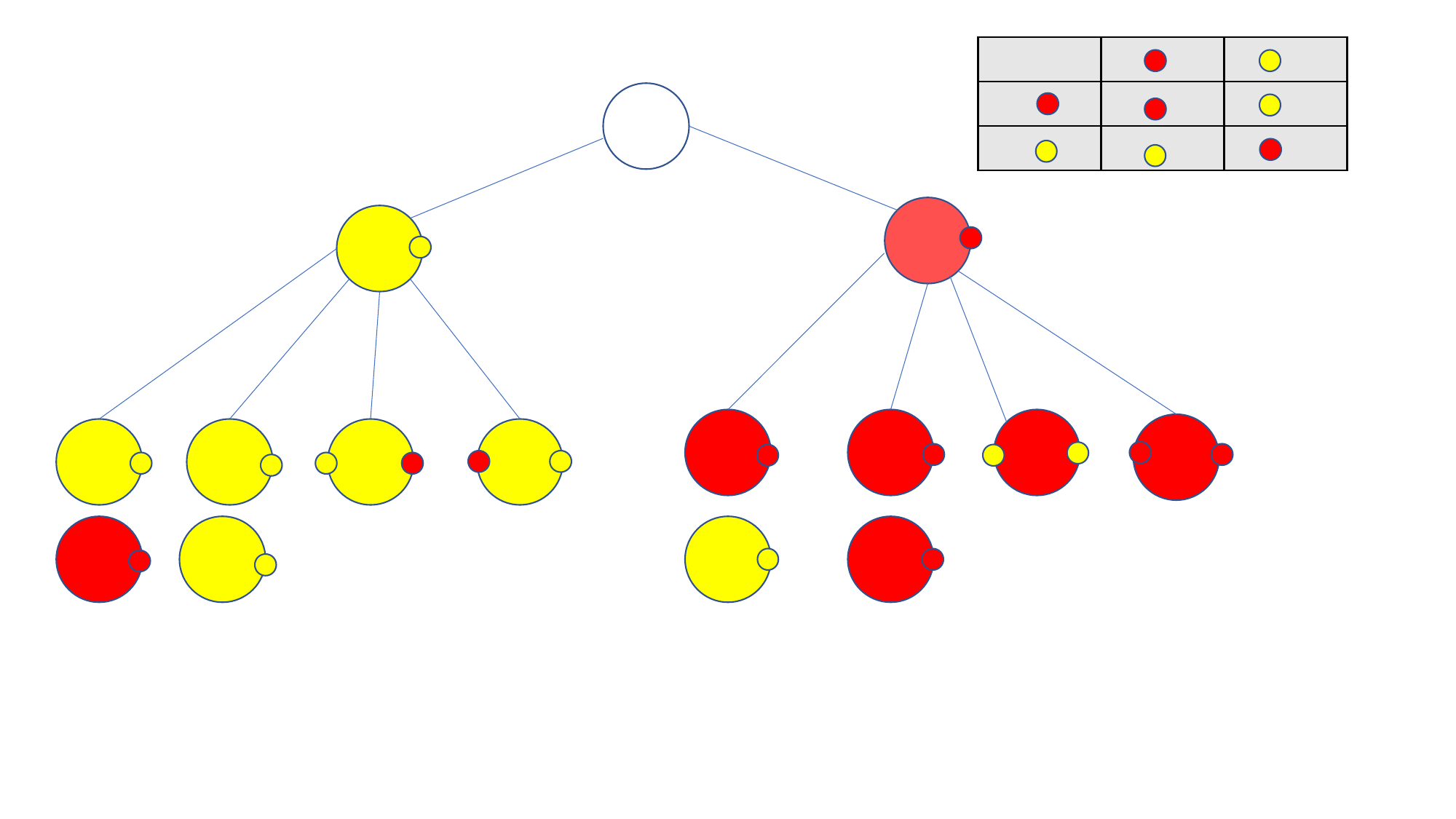}
\caption{An analogue of the Chinese restaurant process.}
\label{FIG1}
\end{figure}
\begin{rem}In order to illustrate the generalized Chinese
restaurant process  constructed above consider the simplest non-trivial
case where $G$ contains just two elements. We represent the unit element of $G$ as a red ball and the other element of $G$ as a yellow ball. The multiplication table of $G$ is shown in Figure \ref{FIG1}.
For each $n$, the elements of $G\sim S(n)$ can be represented by colored cycles (or colored tables) with ball configurations.
The color of a cycle is determined in accordance with the multiplication table: If the multiplication of the balls of a cycle gives a yellow ball, then the color of the cycle is yellow. Otherwise, if the multiplication of the balls in a cycle
gives a red ball, then the color of the cycle is red.  In order to obtain an element of $G\sim S(n)$ from an element of $G\sim S(n-1)$
add a new ball to the configuration describing the chosen element of $G\sim S(n-1)$. The new ball can create a new cycle (with probability
$\frac{t_1}{t_1+t_2+2n}$ provided the new ball is red and with
probability $\frac{t_2}{t_1+t_2+2n}$ provided that the new ball
is yellow). The color of the new cycle will be the same as that of the new ball. Alternatively, the new ball can be placed to the right of the existing ball (with probability $\frac{1}{t_1+t_2+2n}$).
This existing ball is then multiplied by the new ball (see formula (\ref{Element2})), and can change its color according to the multiplication table. Note that the color of the cycle does not change due to the addition of a new ball. Figure \ref{FIG1} shows how the eight elements of $G\sim S(2)$ are obtained from the two elements of $G\sim S(1)$.
\end{rem}
%%%%%%%%%%%%%%%%%%%%%%%%%%%%%%%%%%%%%%%%%%%%%%%%%%%%%%%%%%%%%%%%%%%%%%%%%
%%%%%%%%%%%%%%%%%%%%%%%%%%%%%%%%%%%%%%%%%%%%%%%%%%%%%%%%%%%%%%%%%%%%%%%%%
%%%%%%%%%%%%%%%%%%%%%%%%%%%%%%%%%%%%%%%%%%%%%%%%%%%%%%%%%%%%%%%%%%%%%%%%%
%%%%%%%%%%%%%%%%%%%%%%%%%%%%%%%%%%%%%%%%%%%%%%%%%%%%%%%%%%%%%%%%%%%%%%%%%
%%%%%%%%%%%%%%%%%%%%%%%%%%%%%%%%%%%%%%%%%%%%%%%%%%%%%%%%%%%%%%%%%%%%%%%%%
%%%%%%%%%%%%%%%%%%%%%%%%%%%%%%%%%%%%%%%%%%%%%%%%%%%%%%%%%%%%%%%%%%%%%%%%%
%%%%%%%%%%%%%%%%%%%%%%%%%%%%%%%%%%%%%%%%%%%%%%%%%%%%%%%%%%%%%%%%%%%%%%%%%
%%%%%%%%%%%%%%%%%%%%%%%%%%%%%%%%%%%%%%%%%%%%%%%%%%%%%%%%%%%%%%%%%%%%%%%%%
%%%%%%%%%%%%%%%%%%%%%%%%%%%%%%%%%%%%%%%%%%%%%%%%%%%%%%%%%%%%%%%%%%%%%%%%%
%%%%%%%%%%%%%%%%%%%%%%%%%%%%%%%%%%%%%%%%%%%%%%%%%%%%%%%%%%%%%%%%%%%%%%%%%
%%%%%%%%%%%%%%%%%%%%%%%%%%%%%%%%%%%%%%%%%%%%%%%%%%%%%%%%%%%%%%%%%%%%%%%%%
%%%%%%%%%%%%%%%%%%%%%%%%%%%%%%%%%%%%%%%%%%%%%%%%%%%%%%%%%%%%%%%%%%%%%%%%%
%%%%%%%%%%%%%%%%%%%%%%%%%%%%%%%%%%%%%%%%%%%%%%%%%%%%%%%%%%%%%%%%%%%%%%%%%
%%%%%%%%%%%%%%%%%%%%%%%%%%%%%%%%%%%%%%%%%%%%%%%%%%%%%%%%%%%%%%%%%%%%%%%%%
%%%%%%%%%%%%%%%%%%%%%%%%%%%%%%%%%%%%%%%%%%%%%%%%%%%%%%%%%%%%%%%%%%%%%%%%%
\section{Random set partitions}\label{SectionRandomSetPartitions}
In this Section we discuss random set partitions associated with
the refined Ewens Sampling Formula model. Consider a sample of $n$
genes taken from the population described in Section \ref{SectionModelDefinition}. Label the genes distinctly $1$, $\ldots$, $n$ and assign each an allelic type. The allelic types define
the equivalence relation $\sim$ in the set $\mathbb{N}_n=\{1,\ldots,n\}$.
Thus, we write $i\sim j$ if and only if the genes labeled by $i$ and $j$
have the same allelic type. As a result, the sample of $n$ genes is described by a set partition $\pi$ of $\mathbb{N}_n$, which can be represented in terms of disjoint blocks  as
\begin{equation}\label{3.7.0}
\pi=\left\{B_1^{(r_1)},\ldots,B_{s_1}^{(r_1)},
B_1^{(r_2)},\ldots,B_{s_2}^{(r_2)},\ldots,
B_1^{(r_q)},\ldots,B_{s_q}^{(r_q)}\right\},
\end{equation}
where $r_1$, $\ldots$, $r_q$ are distinct indexes from $\left\{1,\ldots,
k\right\}$ that indicate the classes of alleles in the blocks.
Each $B_j^{(l)}$, $l\in\left\{r_1,\ldots,r_q\right\}$, $j\in\left\{1,\ldots,s_l\right\}$, contains the labels of genes of the $jth$ allele type of the $l$th class. In particular, $B_1^{(r_1)}$ is the block containing the gene labeled by $1$, and $r_1$ specifies the class
of its allelic type, $B_2^{(r_1)}$ contains the smallest element whose allelic type is of the same class $r_1$ that is not in $B_1^{(r_1)}$,
$\ldots$, $B_1^{(r_2)}$ contains the smallest element whose class is different from $r_1$,
$B_2^{(r_2)}$ contains the smallest element whose allelic type is of the same class $r_2$ that is not in $B_1^{(r_2)}$, etc.

If the sample is chosen at random, the corresponding set partition $\Pi_n$ is a random variable. For the model described by the refined
Ewens sampling formula, we obtain
\begin{equation}\label{3.7.1}
\begin{split}
&\Prob\left\{\Pi_n=\left\{B_1^{(r_1)},\ldots,B_{s_1}^{(r_1)},
B_1^{(r_2)},\ldots,B_{s_2}^{(r_2)},\ldots,
B_1^{(r_q)},\ldots,B_{s_q}^{(r_q)}\right\}\right\}\\
&=\frac{(\theta_{r_1})^{s_1}(\theta_{r_2})^{s_2}\ldots(\theta_{r_q})^{s_q}}{(\theta_1+\ldots+\theta_k)_n}\;
\prod\limits_{j_1=1}^{s_1}\left(|B_{j_1}^{(r_1)}|-1\right)!
\ldots
\prod\limits_{j_q=1}^{s_q}\left(|B_{j_q}^{(r_q)}|-1\right)!.
\end{split}
\end{equation}
This follows from the fact that the model is described by the generalized Hoppe urn process; see Section \ref{Section3.2}.

Recall (see, for example, the book by Pitman \cite{PitmanBookCombinatorialProcesses}) that a random set partition is \textit{exchangeable} if
\begin{equation}
\Prob\left\{\Pi_n=\pi\right\}=\Prob\left\{\Pi_n=\sigma(\pi)\right\},
\end{equation}
for each $\sigma\in S(n)$, where $S(n)$ is the permutation group of $\mathbb{N}_n$. Here, $\sigma(\pi)$ is obtained from $\pi$ by relabeling.
That is, $\sigma(\pi)$ is obtained from $\pi$ by putting $\sigma(i)$ and
$\sigma(j)$ in the same block of $\sigma(\pi)$ if and only if $i$ and $j$ are in the same block of $\pi$. The exchangeability of the distribution
(\ref{3.7.1}) is clear since it depends on $\pi$ only through its block sizes.

Let us show that (\ref{3.7.1}) leads to the refined Ewens sampling formula for random multiple partitions of $n$ into $k$ components.
Assume that $\pi$ is a fixed set partition of $\mathbb{N}_n$
defined by (\ref{3.7.0}). Given $\pi$, let $m_j^{(l)}(\pi)$
be the number of blocks $B_1^{(l)}$, $B_2^{(l)}$, $\ldots$ in $\pi$.
Here $l=1,\ldots,k$, and $j=1,\ldots, n$. For each $l$, define the
Young diagram $\lambda^{(l)}(\pi)$ by
\begin{equation}\label{3.8.2}
\lambda^{(l)}(\pi)=\left(1^{m_1^{(l)}(\pi)}2^{m_2^{(l)}(\pi)}\ldots
n^{m_n^{(l)}(\pi)}\right).
\end{equation}
Then $\Lambda_n^{(k)}(\pi)=\left(\lambda^{(1)}(\pi),\ldots,\lambda^{(k)}(\pi)\right)$ is the multiple partition of $n$ into $k$ components associated with the set partition $\pi$ of $\mathbb{N}_n$.
Now, if $\Pi_n$ is a random set partition of $\mathbb{N}_n$ whose distribution is given by equation (\ref{3.7.1}), then the distribution of the corresponding random multiple partition $\mathcal{L}_n^{(k)}\left(\Pi_n\right)$ is given by the refined Ewens sampling formula. In fact, standard combinatorial arguments give us
\begin{equation}\label{3.8.4}
\Prob\left\{\mathcal{L}_n^{(k)}\left(\Pi_n\right)=\Lambda_n^{(k)}\left(\pi\right)\right\}=\frac{n!\Prob\left\{\Pi_n=\pi\right\}}{\prod\limits_{l=1}^k\prod\limits_{j=1}^n
\left(m_j^{(l)}(\pi)\right)!\left(j!\right)^{m_j^{(l)}(\pi)}},
\end{equation}
where $\pi$ is defined by equation (\ref{3.7.0}), and
$\Prob\left\{\Pi_n=\pi\right\}$ is given by (\ref{3.7.1}).
We have
\begin{equation}\label{3.8.5}
\theta_1^{\sum\limits_{j=1}^nm_j^{(1)}(\pi)}\ldots
\theta_k^{\sum\limits_{j=1}^nm_j^{(k)}(\pi)}
=(\theta_{r_1})^{s_1}(\theta_{r_2})^{s_2}\ldots(\theta_{r_q})^{s_q},
\end{equation}
and
\begin{equation}\label{3.8.6}
\prod\limits_{l=1}^k\prod\limits_{j=1}^n
\left[(j-1)!\right]^{m_j^{(l)}(\pi)}
=\prod\limits_{j_1=1}^{s_1}\left(|B_{j_1}^{(r_1)}|-1\right)!
\ldots\prod\limits_{j_q=1}^{s_q}\left(|B_{j_q}^{(r_q)}|-1\right)!.
\end{equation}
Taking into account (\ref{3.8.4})-(\ref{3.8.6}) we find
\begin{equation}
\Prob\left\{\mathcal{L}_n^{(k)}\left(\Pi_n\right)=\Lambda_n^{(k)}(\pi)\right\}
=\frac{\theta_1^{\sum\limits_{j=1}^nm_j^{(1)}(\pi)}\ldots
\theta_k^{\sum\limits_{j=1}^nm_j^{(k)}(\pi)}}{\left(\theta_1+\ldots+\theta_k\right)_n}
\frac{n!}{\prod\limits_{l=1}^k\prod\limits_{j=1}^n
\left(m_j^{(l)}(\pi)\right)!j^{m_j^{(l)}(\pi)}},
\end{equation}
which is the refined Ewens sampling formula.

Let $\left(\Pi_n\right)_{n=1}^{\infty}$ be a sequence of exchangeable
random set partitions. For $1\leq m\leq n$ denote by $\Pi_{m,n}$
the restriction of $\Pi_n$ to $\mathbb{N}_m=\{1,\ldots,m\}$.
The sequence $\left(\Pi_n\right)_{n=1}^{\infty}$ is \textit{consistent in distribution} if $\Pi_m$ has the same distribution as $\Pi_{m,n}$
for every $m<n$. It is not difficult to see that the sequence $\left(\Pi_n\right)_{n=1}^{\infty}$ of random set partitions associated
with random samples whose distribution is defined by (\ref{3.7.1})
is consistent in distribution. This follows from (\ref{3.7.1}), and from the relation
\begin{equation}
\begin{split}
&\frac{1}{\left(|B_1^{(r_1)}|+\ldots+|B_{s_1}^{(r_1)}|
+\ldots+|B_1^{(r_q)}|+\ldots+|B_{s_q}^{(r_q)}|\right)}
+\frac{\theta_1+\ldots+\theta_k}{\left(\theta_1+\ldots+\theta_k\right)_{n+1}}\\
&=\frac{\theta_1+\ldots+\theta_k+n}{\left(\theta_1+\ldots+\theta_k\right)_{n+1}}
=\frac{1}{\left(\theta_1+\ldots+\theta_k\right)_{n}}.
\end{split}
\end{equation}
Recall that a sequence $\Pi_{\infty}=\left(\Pi_n\right)_{n=1}^{\infty}$
of exchangeable random set partitions consistent in distribution
is called an \textit{infinite exchangeable random set partition},
see, for example, Pitman \cite{PitmanBookCombinatorialProcesses},
Section 2. In particular, the sequence $\left(\Pi_n\right)_{n=1}^{\infty}$ where distribution of $\Pi_n$
is defined by equation (\ref{3.7.1}) is an infinite exchangeable set partition. Theorem 2.2 in Pitman \cite{PitmanBookCombinatorialProcesses}
describes a bijection (the Kingman correspondence) between infinite
exchangeable random set partitions and probability distributions
on the set
$$
\nabla=\left\{\left(p_1,p_2,\ldots\right):\;
p_1\geq p_2\geq\ldots\geq 0\;\;\mbox{and}\;\;\sum\limits_{i=1}^{\infty}p_i\leq 1\right\}.
$$
Note that the coordinates $x^{(l)}$ of $\overline{\nabla}_0^{(k)}$ in Proposition \ref{PropositionMEwenSASIntegral} satisfy
$$
\sum\limits_{l=1}^k\sum\limits_{i=1}^{\infty}x_i^{(l)}=1.
$$
Therefore,  $\overline{\nabla}_0^{(k)}$ can be embedded into
$\nabla$. Under this embedding the probability measure $PD\left(\theta_1,\ldots,\theta_k\right)$ on   $\overline{\nabla}_0^{(k)}$ will induce a certain probability measure
on $\nabla$. In this sense Proposition  \ref{PropositionMEwenSASIntegral}
is a manifestation of the Kingman correspondence.

%%%%%%%%%%%%%%%%%%%%%%%%%%%%%%%%%%%%%%%%%%%%%%%%%%%%%%%%%%%%%%%%%%%%%%%%%%%%%%%%%%%%%%%%%%%%%%%%%%%%%%%%%%%%%%%%%%%%%%%%%%%%%%%%%%%%%%%%%%%%%%%%%%%%%%%%%%%%%%%%%%%%%%%%%%%%%%%%%%%%%%%%%%%%%%%%%%%%%%%%%%%%%%%%%%%%%%%%%%%%%%%%%%%%%%%%%%%%%%%%%%%%%%%%%
\section{Numbers of alleles}\label{SectionNumbersAllelicTypes}

Let $K_n^{(l)}$, $l\in\{1,\ldots,k\}$, be the number of different alleles of class $l$ in a random sample of size $n$.
The purpose of this Section is to derive exact formulae
for the random variables $K_n^{(l)}$, and to describe their
limiting distributions in different asymptotic regimes under the assumption that the allelic composition of the sample
is governed by the refined Ewens sampling formula, equation (\ref{RefinedESF}).
%%%%%%%%%%%%%%%%%%%%%%%%%%%%%%%%%%%%%%%%%%%%%%%%%%%%%%%%%%%%%%%%%%%%%%%%%%%%%%%%%%%%%%%%%%%%%%%%%%%%%%%%%%%%%%%%%%%%%%%%%%%%%%%%%%%%%%%%%%%%%%%%%%%%%%%%%%%%%%%%%%%%%%%%%%%%%%%%%%%%%%%%%%%%%%%%%%%%%%%%%%%%%%%%%%%%%%%%%%%%%%%%%%%%%%%%%%%%%%%%%%%%%%%%%
\subsection{Exact formulas for $K_n^{(l)}$}\label{SectionExactFormulasK}
In the context of alleles, the sampling of a black object of the $l$th class in the generalized Hoppe urn described in Section
\ref{SectionProof}
is a mutation characterized by the parameter $\theta_l$ (where $l\in\{1,\ldots,k\}$), which induces new allele types of class $l$. The selection of a non-black object of the $l$th class has the interpretation that the genes are multiplying themselves within their allelic type. With this interpretation,
we have
\begin{equation}\label{K3.1}
K_n^{(l)}=Y_1^{(l)}+Y_2^{(l)}+\ldots+Y_n^{(l)},
\end{equation}
where $Y_j^{(l)}$ is the indicator that assumes $1$ if the black object of class $l$  (with mass $\theta_l>0$) is chosen in the $j$th draw, and assumes $0$ otherwise. Then we can write
\begin{equation}\label{K3.2}
\Prob\left(Y_j^{(l)}=1\right)=\frac{\theta_l}{\theta_1+\ldots+\theta_k+j-1},
\;\;\Prob\left(Y_j^{(l)}=0\right)=1-\frac{\theta_l}{\theta_1+\ldots+\theta_k+j-1}.
\end{equation}
Note that the Bernoulli random variables $Y_1^{(l)}$, $\ldots$,$Y_n^{(l)}$ are independent. Taking expectations, we find
\begin{equation}
\mathbb{E}\left(K_n^{(l)}\right)
=\mathbb{E}(Y_1^{(l)})+\mathbb{E}(Y_2^{(l)})+\ldots+\mathbb{E}(Y_n^{(l)}),
\end{equation}
which gives
\begin{equation}\label{ExpectationNumberAlleles}
\mathbb{E}\left(K_n^{(l)}\right)=\frac{\theta_l}{\theta_1+\ldots+\theta_k}
+\frac{\theta_l}{\theta_1+\ldots+\theta_k+1}+\ldots+
\frac{\theta_l}{\theta_1+\ldots+\theta_k+n-1}.
\end{equation}
Denote by $\mathcal{H}_n^{(p)}(x)$  a function defined by
\begin{equation}\label{H3.5}
\mathcal{H}_n^{(p)}(x)=\frac{1}{x^p}+\frac{1}{(x+1)^p}+\ldots+\frac{1}{(x+n-1)^p},
\end{equation}
where $p\in \{1,2,\ldots\}$. Then we obtain
\begin{equation}\label{3.6}
\mathbb{E}\left(K_n^{(l)}\right)=\theta_l\mathcal{H}_n^{(1)}\left(\theta_1+\ldots+\theta_k\right).
\end{equation}
The independence of Bernoulli random variables $Y_1^{(l)}$, $\ldots$,$Y_n^{(l)}$ implies
\begin{equation}\label{3.7}
\Var\left(K_n^{(l)}\right)
=\Var(Y_1^{(l)})+\Var(Y_2^{(l)})+\ldots+\Var(Y_n^{(l)}),
\end{equation}
which gives
\begin{equation}
\Var\left(K_n^{(l)}\right)=\sum\limits_{j=1}^n\frac{\theta_l}{\theta_1+\ldots+\theta_k+j-1}\left(1-\frac{\theta_l}{\theta_1+\ldots+\theta_k+j-1}\right).
\end{equation}
Therefore,
\begin{equation}\label{3.9}
\Var\left(K_n^{(l)}\right)=\theta_l\mathcal{H}_n^{(1)}\left(\theta_1+\ldots+\theta_k\right)-\theta_l^2\mathcal{H}_n^{(2)}\left(\theta_1+\ldots+\theta_k\right).
\end{equation}
Denote by $\left[\begin{array}{c}
  n \\
  m
\end{array}\right]$ be the number of permutations from $S(n)$ with $m$ disjoint cycles.
This number can be written explicitly as\footnote{Notation $\lambda\vdash n$ means that
the Young diagram $\lambda$ contains $n$ boxes.}
$$
\left[\begin{array}{c}
  n \\
  m
\end{array}\right]=\underset{l(\lambda)=m}{\sum\limits_{\lambda\vdash n}}
\frac{n!}{\prod_{j=1}^nj^{m_j(\lambda)}\left(m_j(\lambda)\right)!}.
$$
With this notation we can write the joint distribution of the random variables
$K_n^{(1)}$, $\ldots$, $K_n^{(k)}$ as
\begin{equation}\label{JointDistributionAllelicTypes}
\Prob\left(K_n^{(1)}=p_1,\ldots,K_n^{(k)}=p_k\right)=
\frac{n!\theta_1^{p_1}\ldots\theta_k^{p_k}}{\left(\theta_1+\ldots+\theta_k\right)_n}
\sum\limits_{n_1+\ldots+n_k=n}\frac{\left[\begin{array}{c}
  n_1 \\
  p_1
\end{array}\right]\ldots
\left[\begin{array}{c}
  n_k \\
  p_k
\end{array}\right]}{n_1!\ldots n_k!}.
\end{equation}
In the case of $k=1$ (all alleles belong to the same class)
formula (\ref{ExpectationNumberAlleles}) for the expectation of $K_n^{(l)}$, formula (\ref{3.9}) for the variance of $K_n^{(l)}$,
and formula (\ref{JointDistributionAllelicTypes})
for the joint distribution of random variables $K_n^{(1)}$,
$\ldots$, $K_n^{(k)}$ are reduced
to the well-known formulae derived by Ewens in Ref. \cite{Ewens}.
%%%%%%%%%%%%%%%%%%%%%%%%%%%%%%%%%%%%%%%%%%%%%%%%%%%%%%%%%%%%%%%%%%%%%%%%%
\subsection{Inequalities for $\mathcal{H}_n^{(1)}(x)$ and
$\mathcal{H}_n^{(2)}(x)$}\label{SectionInequalitiesHfunctions}
In the subsequent asymptotic analysis we will need
the following inequalities for the functions  $\mathcal{H}_n^{(1)}(x)$ and
$\mathcal{H}_n^{(2)}(x)$ defined by equation (\ref{H3.5}).
\begin{lem}\label{4.1.1}
Let the functions  $\mathcal{H}_n^{(1)}(x)$ and
$\mathcal{H}_n^{(2)}(x)$ be defined by equation (\ref{H3.5}). Then the following inequalities hold true
\begin{equation}\label{4.1.1in}
\frac{n}{2x(x+n)}<\mathcal{H}_n^{(1)}(x)-\log\left(1+\frac{n}{x}\right)<\frac{n}{x(x+n)},
\end{equation}
and
\begin{equation}\label{4.1.2}
\frac{n}{x(x+n)}<\mathcal{H}_n^{(2)}(x)<\frac{1}{x^2}+\frac{n-1}{x(x+n-1)}.
\end{equation}
Here $x>0$, and $n=1,2,\ldots$.
\end{lem}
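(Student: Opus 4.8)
The plan is to estimate $\mathcal{H}_n^{(1)}(x)$ and $\mathcal{H}_n^{(2)}(x)$ by comparing the finite sums with integrals, which is the standard device for bounding sums of a monotone function. For the first inequality, I would write
$$
\mathcal{H}_n^{(1)}(x)-\log\left(1+\frac{n}{x}\right)
=\sum_{j=0}^{n-1}\left(\frac{1}{x+j}-\int_{x+j}^{x+j+1}\frac{dt}{t}\right)
=\sum_{j=0}^{n-1}\int_{x+j}^{x+j+1}\left(\frac{1}{x+j}-\frac{1}{t}\right)dt,
$$
using $\log(1+n/x)=\log\frac{x+n}{x}=\int_x^{x+n}dt/t$. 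Each integrand $\frac{1}{x+j}-\frac1t$ is nonnegative on $[x+j,x+j+1]$, which already gives the lower-bound-free positivity; to get the explicit two-sided bounds I would bound $\frac{1}{x+j}-\frac1t=\frac{t-(x+j)}{t(x+j)}$ from above by $\frac{1}{(x+j)^2}$ (taking $t-(x+j)\le 1$ and $t\ge x+j$... actually $\frac{t-(x+j)}{t(x+j)}\le\frac{1}{t(x+j)}\le\frac{1}{(x+j)(x+j)}$ is too weak) — more efficiently, integrate exactly: $\int_{x+j}^{x+j+1}\left(\frac{1}{x+j}-\frac1t\right)dt=\frac{1}{x+j}-\log\frac{x+j+1}{x+j}$, and then use the elementary inequalities $\frac{1}{2}\cdot\frac{1}{(x+j)(x+j+1)}<\frac{1}{x+j}-\log\frac{x+j+1}{x+j}<\frac{1}{2}\cdot\frac{1}{(x+j)^2}$ (equivalently $\frac{2u}{2+u}<\log(1+u)<u-\frac{u^2}{2}+\cdots$ type bounds with $u=1/(x+j)$). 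Summing and telescoping $\sum_{j=0}^{n-1}\frac{1}{(x+j)(x+j+1)}=\frac1x-\frac{1}{x+n}=\frac{n}{x(x+n)}$ yields the right-hand bound $\frac{n}{2x(x+n)}$ for the lower estimate; for the upper estimate I would instead compare $\frac{1}{x+j}$ with $\int_{x+j-1}^{x+j}dt/t$ (the left-endpoint versus the interval to its left), giving $\mathcal{H}_n^{(1)}(x)-\log\frac{x+n}{x}<\frac1x\cdot(\text{something})$; the cleanest route is: lower bound uses right Riemann sum, upper bound uses that $\mathcal{H}_n^{(1)}(x)<\frac1x+\int_x^{x+n-1}dt/t=\frac1x+\log\frac{x+n-1}{x}$ and then $\log\frac{x+n-1}{x}-\log\frac{x+n}{x}=\log\frac{x+n-1}{x+n}<0$, combined with $\frac1x-\frac{1}{x+n}=\frac{n}{x(x+n)}$. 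I would reconcile the exact constants by the convexity estimate $\frac{1}{a}-\frac{1}{a+1}<\frac1a-\log\frac{a+1}{a}$ ... essentially this is bookkeeping among three equivalent forms.

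For the second inequality I would use the same integral comparison with $f(t)=1/t^2$, whose antiderivative is $-1/t$. The right Riemann sum bound $\frac{1}{(x+j)^2}>\int_{x+j}^{x+j+1}\frac{dt}{t^2}=\frac{1}{x+j}-\frac{1}{x+j+1}$ telescopes to the lower bound $\mathcal{H}_n^{(2)}(x)>\frac1x-\frac1{x+n}=\frac{n}{x(x+n)}$. For the upper bound I would peel off the first term and compare the rest against the integral shifted left: $\frac{1}{(x+j)^2}<\int_{x+j-1}^{x+j}\frac{dt}{t^2}=\frac{1}{x+j-1}-\frac{1}{x+j}$ for $j\ge1$, which telescopes over $j=1,\dots,n-1$ to $\frac1x-\frac{1}{x+n-1}=\frac{n-1}{x(x+n-1)}$; adding back the $j=0$ term $\frac{1}{x^2}$ gives exactly the claimed $\frac{1}{x^2}+\frac{n-1}{x(x+n-1)}$.

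I do not expect a genuine obstacle here — everything reduces to integral/Riemann-sum comparison and telescoping partial fractions. The only mildly delicate point is matching the constant $\frac12$ in the lower bound of (\ref{4.1.1in}): that requires the sharper estimate $\frac1a-\log\frac{a+1}{a}>\frac{1}{2a(a+1)}$ rather than the crude $>0$, which I would prove by noting $g(u)=u-\log(1+u)-\frac{u^2}{2(1+u)}$ satisfies $g(0)=0$ and $g'(u)=\frac{u^2}{2(1+u)^2}>0$ (with $u=1/a$), so $g(u)>0$ for $u>0$; then sum and telescope as above. All the resulting series are geometric-telescoping and converge termwise, so no uniformity or convergence issue arises for $x>0$ and finite $n$.
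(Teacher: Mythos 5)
Your overall strategy is exactly the paper's: write $\mathcal{H}_n^{(1)}(x)-\log(1+n/x)$ as a sum of termwise differences $\frac{1}{x+j}-\int_{x+j}^{x+j+1}\frac{dt}{t}$, bound each term by a partial fraction, and telescope; and for $\mathcal{H}_n^{(2)}$ use Riemann-sum comparison with $\int dt/t^2$, peeling off the first term for the upper bound. Your lower bound in (\ref{4.1.1in}) (via $g(u)=u-\log(1+u)-\frac{u^2}{2(1+u)}$, $g'(u)=\frac{u^2}{2(1+u)^2}>0$, which is exactly the trapezoid/convexity bound the paper uses) and your entire treatment of (\ref{4.1.2}) are correct and essentially identical to the paper's.

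The one place you do not close the argument is the \emph{upper} bound in (\ref{4.1.1in}). None of the three candidates you write down works as stated: the termwise bound $\frac{1}{x+j}-\log\frac{x+j+1}{x+j}<\frac{1}{2(x+j)^2}$ does not telescope to $\frac{n}{x(x+n)}$ and its sum can exceed that quantity when $x$ is small (it is of order $\frac{1}{2x^2}$ versus $\frac{1}{x}$); the global route via $\mathcal{H}_n^{(1)}(x)<\frac1x+\log\frac{x+n-1}{x}$ only yields the weaker bound $\frac1x$ unless you add the further step $\log\frac{x+n}{x+n-1}>\frac{1}{x+n}$; and the ``convexity estimate'' you quote, $\frac1a-\frac{1}{a+1}<\frac1a-\log\frac{a+1}{a}$, has the inequality reversed. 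What you need — and what the paper uses — is the left-Riemann-sum bound $\int_{a}^{a+1}\frac{dt}{t}>\frac{1}{a+1}$, i.e.
\begin{equation*}
\frac{1}{x+j}-\log\frac{x+j+1}{x+j}\;<\;\frac{1}{x+j}-\frac{1}{x+j+1}\;=\;\frac{1}{(x+j)(x+j+1)},
\end{equation*}
which telescopes over $j=0,\dots,n-1$ to exactly $\frac{n}{x(x+n)}$. With that one line inserted, your proof is complete and coincides with the paper's.
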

\begin{proof}The proof of these inequalities can be extracted
from that of Proposition 1 in Tsukuda \cite{Tsukuda}. We reproduce
the proof here for the reader's convenience.

In order to prove (\ref{4.1.1in}) note that
$$
\mathcal{H}_n^{(1)}(x)-\log\left(1+\frac{n}{x}\right)=\sum\limits_{j=1}^n
\left(\frac{1}{x+j-1}-\int\limits_{x+j-1}^{x+j}\frac{dt}{t}\right).
$$
Since
$$
\frac{1}{x+j}<\int\limits_{x+j-1}^{x+j}\frac{dt}{t}<\frac{1}{x+j-1},\;\;\;\;
\int\limits_{x+j-1}^{x+j}\frac{dt}{t}<\frac{1}{2}\left(\frac{1}{x+j-1}+\frac{1}{x+j}\right),
$$
we have
$$
\frac{1}{2}\left(\frac{1}{x+j-1}-\frac{1}{x+j}\right)<\frac{1}{x+j-1}-\int\limits_{x+j-1}^{x+j}\frac{dt}{t}<\frac{1}{x+j-1}-\frac{1}{x+j}.
$$
This gives
$$
\frac{1}{2}\sum\limits_{j=1}^n\left(\frac{1}{x+j-1}-\frac{1}{x+j}\right)<
\mathcal{H}_n^{(1)}(x)-\log\left(1+\frac{n}{x}\right)
<\sum\limits_{j=1}^n\left(\frac{1}{x+j-1}-\frac{1}{x+j}\right),
$$
and inequality (\ref{4.1.1in}) follows. In order to prove (\ref{4.1.2}) note that
$$
\sum\limits_{j=1}^n\frac{1}{\left(x+j-1\right)^2}>\int\limits_x^{x+n}\frac{dt}{t^2}=\frac{n}{x(x+n)},
$$
and that
$$
\frac{n-1}{x(x+n-1)}=\int\limits_x^{x+n-1}\frac{dt}{t^2}>\frac{1}{(x+1)^2}+\ldots+
\frac{1}{(x+n-1)^2}=\sum\limits_{j=1}^n\frac{1}{(x+j-1)^2}-\frac{1}{x^2}.
$$
\end{proof}

%%%%%%%%%%%%%%%%%%%%%%%%%%%%%%%%%%%%%%%%%%%%%%%%%%%%%%%%%%%%%%%%%%%%%%%%%
%%%%%%%%%%%%%%%%%%%%%%%%%%%%%%%%%%%%%%%%%%%%%%%%%%%%%%%%%%%%%%%%%%%%%%%%%
%%%%%%%%%%%%%%%%%%%%%%%%%%%%%%%%%%%%%%%%%%%%%%%%%%%%%%%%%%%%%%%%%%%%%%%%%
%%%%%%%%%%%%%%%%%%%%%%%%%%%%%%%%%%%%%%%%%%%%%%%%%%%%%%%%%%%%%%%%%%%%%%%%%
%%%%%%%%%%%%%%%%%%%%%%%%%%%%%%%%%%%%%%%%%%%%%%%%%%%%%%%%%%%%%%%%%%%%%%%%%
%%%%%%%%%%%%%%%%%%%%%%%%%%%%%%%%%%%%%%%%%%%%%%%%%%%%%%%%%%%%%%%%%%%%%%%%%
%%%%%%%%%%%%%%%%%%%%%%%%%%%%%%%%%%%%%%%%%%%%%%%%%%%%%%%%%%%%%%%%%%%%%%%%%
%%%%%%%%%%%%%%%%%%%%%%%%%%%%%%%%%%%%%%%%%%%%%%%%%%%%%%%%%%%%%%%%%%%%%%%%%
%%%%%%%%%%%%%%%%%%%%%%%%%%%%%%%%%%%%%%%%%%%%%%%%%%%%%%%%%%%%%%%%%%%%%%%%%
%%%%%%%%%%%%%%%%%%%%%%%%%%%%%%%%%%%%%%%%%%%%%%%%%%%%%%%%%%%%%%%%%%%%%%%%%
%%%%%%%%%%%%%%%%%%%%%%%%%%%%%%%%%%%%%%%%%%%%%%%%%%%%%%%%%%%%%%%%%%%%%%%%%
%%%%%%%%%%%%%%%%%%%%%%%%%%%%%%%%%%%%%%%%%%%%%%%%%%%%%%%%%%%%%%%%%%%%%%%%%
%%%%%%%%%%%%%%%%%%%%%%%%%%%%%%%%%%%%%%%%%%%%%%%%%%%%%%%%%%%%%%%%%%%%%%%%%
%%%%%%%%%%%%%%%%%%%%%%%%%%%%%%%%%%%%%%%%%%%%%%%%%%%%%%%%%%%%%%%%%%%%%%%%%
%%%%%%%%%%%%%%%%%%%%%%%%%%%%%%%%%%%%%%%%%%%%%%%%%%%%%%%%%%%%%%%%%%%%%%%%%
%%%%%%%%%%%%%%%%%%%%%%%%%%%%%%%%%%%%%%%%%%%%%%%%%%%%%%%%%%%%%%%%%%%%%%%%%
%%%%%%%%%%%%%%%%%%%%%%%%%%%%%%%%%%%%%%%%%%%%%%%%%%%%%%%%%%%%%%%%%%%%%%%%%
%%%%%%%%%%%%%%%%%%%%%%%%%%%%%%%%%%%%%%%%%%%%%%%%%%%%%%%%%%%%%%%%%%%%%%%%%
%%%%%%%%%%%%%%%%%%%%%%%%%%%%%%%%%%%%%%%%%%%%%%%%%%%%%%%%%%%%%%%%%%%%%%%%%
\subsection{Limit theorems}
\subsubsection{The case of constant mutation parameters}
First, let us assume that the mutation parameters
$\theta_1$, $\ldots$, $\theta_k$ are constant strictly positive numbers that do not depend on the sample size $n$. Then the following theorem
can be easily obtained from the results of Sections \ref{SectionExactFormulasK} and \ref{SectionInequalitiesHfunctions}.
\begin{thm}\label{TheoremKconstant}Let $K_n^{(l)}$, $l\in\{1,\ldots,k\}$, be the number of different alleles of class $l$ in a random sample of size $n$, and assume that the mutation rates $\theta_1$, $\ldots$, $\theta_k$ are constant strictly positive numbers. Then
for each $l$, $l\in\left\{1,\ldots,k\right\}$,
we have
\begin{equation}\label{4.1.4}
\mathbb{E}\left(K_n^{(l)}\right)=\theta_l\log(n)+O(1),
\end{equation}
\begin{equation}\label{4.1.5}
\Var\left(K_n^{(l)}\right)=\theta_l\log(n)+O(1),
\end{equation}
\begin{equation}\label{4.1.6}
\frac{K_n^{(l)}}{\log(n)}\overset{P}{\longrightarrow}\theta_l,
\end{equation}
and
\begin{equation}\label{4.1.7}
\frac{K_n^{(l)}-\theta_l\log(n)}{\sqrt{\log(n)}}\overset{D}{\longrightarrow}\mathcal{N}(0,1),
\end{equation}
as $n\rightarrow\infty$. In (\ref{4.1.6}) the convergence is in probability, in (\ref{4.1.7}) the convergence is in distribution.
We have denoted by $\mathcal{N}(0,1)$ a standard normal random variable.
\end{thm}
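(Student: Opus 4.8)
The plan is to read off (\ref{4.1.4})--(\ref{4.1.7}) from the exact formulas (\ref{3.6}) and (\ref{3.9}) for $\mathbb{E}\left(K_n^{(l)}\right)$ and $\Var\left(K_n^{(l)}\right)$ together with the estimates of Lemma \ref{4.1.1}, and then to invoke two classical limit theorems for sums of independent random variables. Write $x=\theta_1+\ldots+\theta_k$, a fixed positive constant in this regime. First I would pin down the growth of the two harmonic-type sums: since $\log\left(1+\frac{n}{x}\right)=\log(x+n)-\log x=\log n+O(1)$, and by (\ref{4.1.1in}) the difference $\mathcal{H}_n^{(1)}(x)-\log\left(1+\frac{n}{x}\right)$ lies in $\left(0,\tfrac{1}{x}\right)$, we get $\mathcal{H}_n^{(1)}(x)=\log n+O(1)$; likewise (\ref{4.1.2}) gives $0<\mathcal{H}_n^{(2)}(x)<\tfrac{1}{x^2}+\tfrac{1}{x}$, so $\mathcal{H}_n^{(2)}(x)=O(1)$. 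Substituting into (\ref{3.6}) and (\ref{3.9}) yields $\mathbb{E}\left(K_n^{(l)}\right)=\theta_l\log n+O(1)$ and $\Var\left(K_n^{(l)}\right)=\theta_l\log n+O(1)$, i.e. (\ref{4.1.4}) and (\ref{4.1.5}).

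For the weak law (\ref{4.1.6}) I would apply Chebyshev's inequality to $K_n^{(l)}$: for every $\delta>0$,
\[
\Prob\left(\left|K_n^{(l)}-\mathbb{E}\left(K_n^{(l)}\right)\right|>\delta\log n\right)\le\frac{\Var\left(K_n^{(l)}\right)}{\delta^2(\log n)^2}=\frac{\theta_l\log n+O(1)}{\delta^2(\log n)^2}\longrightarrow 0,
\]
so $K_n^{(l)}/\log n-\mathbb{E}\left(K_n^{(l)}\right)/\log n\to 0$ in probability; since $\mathbb{E}\left(K_n^{(l)}\right)/\log n\to\theta_l$ by (\ref{4.1.4}), the convergence (\ref{4.1.6}) follows.

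For asymptotic normality (\ref{4.1.7}), recall from (\ref{K3.1})--(\ref{K3.2}) that $K_n^{(l)}=\sum_{j=1}^n Y_j^{(l)}$ is a sum of independent Bernoulli variables, so the centered array $\left\{Y_j^{(l)}-\mathbb{E}\left(Y_j^{(l)}\right)\right\}_{j=1}^n$ has entries uniformly bounded by $1$ and total variance $s_n^2:=\Var\left(K_n^{(l)}\right)=\theta_l\log n+O(1)\to\infty$. The Lindeberg condition is therefore automatic: once $\delta s_n\ge 1$, every indicator $\mathbf{1}\left(\left|Y_j^{(l)}-\mathbb{E}\left(Y_j^{(l)}\right)\right|>\delta s_n\right)$ is identically zero, so the Lindeberg ratio vanishes for all large $n$, and the Lindeberg--Feller theorem gives $\left(K_n^{(l)}-\mathbb{E}\left(K_n^{(l)}\right)\right)/s_n\overset{D}{\longrightarrow}\mathcal{N}(0,1)$. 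Finally, writing
\[
\frac{K_n^{(l)}-\theta_l\log n}{\sqrt{\log n}}=\frac{s_n}{\sqrt{\log n}}\cdot\frac{K_n^{(l)}-\mathbb{E}\left(K_n^{(l)}\right)}{s_n}+\frac{\mathbb{E}\left(K_n^{(l)}\right)-\theta_l\log n}{\sqrt{\log n}},
\]
the last term is $O(1)/\sqrt{\log n}\to 0$ by (\ref{4.1.4}) and $s_n/\sqrt{\log n}\to\sqrt{\theta_l}$ by (\ref{4.1.5}), so Slutsky's theorem delivers the stated convergence in distribution to a centered Gaussian of variance $\theta_l$ — equivalently $\left(K_n^{(l)}-\theta_l\log n\right)/\sqrt{\theta_l\log n}\overset{D}{\longrightarrow}\mathcal{N}(0,1)$, which is (\ref{4.1.7}).

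I do not expect a genuine obstacle in this argument. The only points demanding a little care are the passage from centering at $\mathbb{E}\left(K_n^{(l)}\right)$ to centering at $\theta_l\log n$ (and from normalizing by $s_n$ to normalizing by $\sqrt{\log n}$), which works precisely because the $O(1)$ error in the mean is $o(\sqrt{\log n})$ and $s_n^2/(\theta_l\log n)\to 1$, and the verification of the Lindeberg condition, which is immediate for uniformly bounded summands with divergent variance and so requires no direct moment computation.
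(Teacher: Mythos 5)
Your proof is correct and follows essentially the same route as the paper, which derives (\ref{4.1.4})--(\ref{4.1.5}) from (\ref{3.6}), (\ref{3.9}) and Lemma \ref{4.1.1} and then defers (\ref{4.1.6})--(\ref{4.1.7}) to the standard Bernoulli-decomposition argument (Chebyshev plus Lindeberg--Feller) for the classical Ewens case; you have simply written out the details the paper cites. Your computation also correctly shows that with the normalization $\sqrt{\log n}$ the limit is $\mathcal{N}(0,\theta_l)$, i.e.\ the denominator in (\ref{4.1.7}) should read $\sqrt{\theta_l\log n}$ to match the Watterson theorem it generalizes.
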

\begin{proof} Formulae (\ref{4.1.4}) and (\ref{4.1.5}) follow from
(\ref{3.6}), (\ref{3.7}), (\ref{4.1.1}) and (\ref{4.1.2}). The proof of (\ref{4.1.6})
and (\ref{4.1.7}) is a repetition of arguments for the standard Ewens distribution; see the book by Mahmoud \cite{Mahmoud}, Section 9.
\end{proof}
\begin{rem}The asymptotic normality of $K_n^{(l)}$ (equation (\ref{4.1.7}))
is an extension of the Watterson limit theorem \cite{Watterson} to the case
of an arbitrary number $k$ of classes of alleles.
\end{rem}
\subsubsection{Growth of mutation parameters with $n$}
It is of interest to investigate asymptotic properties of numbers of  $K_n^{(l)}$ in the situation where the mutation
parameters $\theta_1$, $\ldots$, $\theta_k$ tend to infinity
as the size $n$ of a sample increases. In the case of $k=1$
(that is, in the case of the classical Ewens formula), different asymptotic regimes where both $n$ and the mutation parameter tend to infinity were considered in Feng \cite{FengL}, Tsukuda \cite{Tsukuda, Tsukuda1}. Our aim here is to extend some results obtained in Ref. \cite{Tsukuda} to the situation where the allelic composition of a sample is described by the refined Ewens sampling formula, equation (\ref{RefinedESF}).
\begin{prop}\label{Proposition12.1}
Let $K_n^{(l)}$, $l\in\{1,\ldots,k\}$, be the number of different alleles of class $l$ in a random sample of size $n$. If the mutation parameters $\theta_1$, $\ldots$, $\theta_k$  are strictly positive valued functions of $n$
growing to infinity as $n\rightarrow\infty$, then
\begin{equation}\label{12.2}
\mathbb{E}\left(K_n^{(l)}\right)=\theta_l\log\left(1+\frac{n}{w}\right)+O(1),
\end{equation}
and
\begin{equation}\label{12.3}
\Var\left(K_n^{(l)}\right)=\theta_l\log\left(1+\frac{n}{w}\right)
-\frac{n\theta_l^2}{w(w+n)}+O(1),
\end{equation}
as $n\rightarrow\infty$, where $w$ is the sum of the mutation parameters,
\begin{equation}\label{smalldablu}
w=\theta_1+\ldots+\theta_k.
\end{equation}
\end{prop}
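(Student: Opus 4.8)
The plan is to read the two claims off the exact formulas already established: equation (\ref{3.6}) gives $\mathbb{E}\left(K_n^{(l)}\right)=\theta_l\mathcal{H}_n^{(1)}(w)$ and equation (\ref{3.9}) gives $\Var\left(K_n^{(l)}\right)=\theta_l\mathcal{H}_n^{(1)}(w)-\theta_l^2\mathcal{H}_n^{(2)}(w)$, where $w=\theta_1+\ldots+\theta_k$ as in (\ref{smalldablu}). Then I would replace $\mathcal{H}_n^{(1)}(w)$ and $\mathcal{H}_n^{(2)}(w)$ by their leading terms using the inequalities of Lemma \ref{4.1.1}, and check that the remainders are $O(1)$ \emph{uniformly} in the regime where $\theta_1,\ldots,\theta_k$ grow with $n$; that uniformity will come from the trivial bound $\theta_l\le w$.

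First, for the expectation: the inequality (\ref{4.1.1in}) with $x=w$ gives $0<\mathcal{H}_n^{(1)}(w)-\log\left(1+\frac{n}{w}\right)<\frac{n}{w(w+n)}<\frac{1}{w}$. Multiplying through by $\theta_l$ and using $\theta_l/w\le 1$, we get $0<\theta_l\mathcal{H}_n^{(1)}(w)-\theta_l\log\left(1+\frac{n}{w}\right)<1$, which is exactly (\ref{12.2}).

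Next, for the variance: the term $\theta_l\mathcal{H}_n^{(1)}(w)$ is handled as above. For $\theta_l^2\mathcal{H}_n^{(2)}(w)$ I would use (\ref{4.1.2}) with $x=w$, so that $0<\mathcal{H}_n^{(2)}(w)-\frac{n}{w(w+n)}<\frac{1}{w^2}+\frac{n-1}{w(w+n-1)}-\frac{n}{w(w+n)}$. A short computation shows $\frac{n-1}{w(w+n-1)}-\frac{n}{w(w+n)}=-\frac{1}{(w+n-1)(w+n)}<0$, hence $0<\mathcal{H}_n^{(2)}(w)-\frac{n}{w(w+n)}<\frac{1}{w^2}$. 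Multiplying by $\theta_l^2$ and using $\theta_l^2/w^2\le 1$ gives $\theta_l^2\mathcal{H}_n^{(2)}(w)=\frac{n\theta_l^2}{w(w+n)}+O(1)$. Subtracting this from $\theta_l\log\left(1+\frac{n}{w}\right)+O(1)$ yields (\ref{12.3}).

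I do not expect a real obstacle here — the argument is a routine combination of the exact formulas with Lemma \ref{4.1.1}. The only place a slip is possible is the elementary identity $\frac{n-1}{w(w+n-1)}-\frac{n}{w(w+n)}=-\frac{1}{(w+n-1)(w+n)}$, which must be verified carefully so that the upper bound on $\mathcal{H}_n^{(2)}(w)$ collapses to the clean $\frac{1}{w^2}$; and one should record explicitly that every $O(1)$ appearing above is bounded by an absolute constant (in fact by $1$), so that the statements remain valid as $\theta_1,\ldots,\theta_k\to\infty$.
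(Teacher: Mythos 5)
Your proposal is correct and follows essentially the same route as the paper: both read the claims off the exact formulas (\ref{3.6}) and (\ref{3.9}) and then apply the inequalities (\ref{4.1.1in}) and (\ref{4.1.2}) of Lemma \ref{4.1.1} at $x=w$, using $\theta_l\le w$ to keep the error terms bounded. Your elementary identity $\frac{n-1}{w(w+n-1)}-\frac{n}{w(w+n)}=-\frac{1}{(w+n-1)(w+n)}$ checks out, so the argument is complete.
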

\begin{proof}
Formula (\ref{3.6}) and inequality (\ref{4.1.1in}) imply
\begin{equation}\label{12.4}
\frac{\theta_ln}{2w(w+n)}<\mathbb{E}\left(K_n^{(l)}\right)-\theta_l\log\left(1+\frac{n}{w}\right)<\frac{\theta_ln}{w(w+n)},
\end{equation}
which gives equation (\ref{12.2}). From inequality (\ref{4.1.2}) we conclude that
\begin{equation}\label{12.5}
0<\theta_l^2\mathcal{H}_n^{(2)}(w)-\frac{n\theta^2_l}{w(w+n)}<\frac{\theta_l^2}{w^2}
-\frac{\theta_l^2}{(w+n)(w+n-1)}.
\end{equation}
Therefore, equation (\ref{12.3}) follows from (\ref{3.6}),(\ref{3.9}), (\ref{12.2}),  and (\ref{12.5}).
\end{proof}
Next, we investigate the convergence in probability
of random variables $K_n^{(l)}$, $l\in\left\{1,\ldots,k\right\}$.
\begin{thm}\label{TheoremConvergenceInProbability}Assume that the mutation parameters $\theta_1$,
$\ldots$, $\theta_k$ depend on $n$ as
\begin{equation}\label{Thetan}
\theta_1(n)=\alpha_1n^{\beta},\ldots,\theta_k(n)=\alpha_kn^{\beta};\;\;\beta\geq 0;\;\;\alpha_1>0,\ldots,\alpha_k>0.
\end{equation}
Let $K_n^{(l)}$, $l\in\{1,\ldots,k\}$, be the number of different alleles of class $l$ in a random sample of size $n$.\\
(a) If $0\leq \beta<1$, then $$\frac{K_n^{(l)}}{n^{\beta}\log(n)}\overset{P}{\longrightarrow}\alpha_l(1-\beta).$$\\
(b) If $\beta=1$, then $$\frac{K_n^{(l)}}{n}\overset{P}{\longrightarrow}\alpha_l\log\frac{1+A}{A},$$ where
\begin{equation}\label{Aalpha}
A=\alpha_1+\ldots+\alpha_k.
\end{equation}
(c) If $\beta>1$, then
$$
\frac{K_n^{(l)}}{n}\overset{P}{\longrightarrow}\frac{\alpha_l}{A},
$$
where $A$ is defined by equation (\ref{Aalpha}).
\end{thm}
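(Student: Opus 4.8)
The plan is to prove all three parts by the second moment method, building on the exact representation (\ref{K3.1})--(\ref{K3.2}) of $K_n^{(l)}$ as a sum $Y_1^{(l)}+\ldots+Y_n^{(l)}$ of independent Bernoulli variables. For each part I will identify the normalizing sequence $a_n$ and the constant $c$ appearing in the claim, and it will suffice to verify
\begin{equation}
\frac{\E\!\left(K_n^{(l)}\right)}{a_n}\longrightarrow c,\qquad\frac{\Var\!\left(K_n^{(l)}\right)}{a_n^{2}}\longrightarrow 0\qquad(n\to\infty),
\end{equation}
after which Chebyshev's inequality gives $\Prob\!\left(|K_n^{(l)}/a_n-c|>\varepsilon\right)\to 0$ for every $\varepsilon>0$. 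The variance input will be cheap: by (\ref{3.6}) and (\ref{3.9}) together with $\mathcal{H}_n^{(2)}\ge 0$ one has $\Var(K_n^{(l)})\le\E(K_n^{(l)})$, and in every regime below $a_n\to\infty$ while $\E(K_n^{(l)})=O(a_n)$, so $\Var(K_n^{(l)})/a_n^2=O(1/a_n)\to0$ automatically. Thus the whole argument reduces to the asymptotics of $\E(K_n^{(l)})$.

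For the mean I will invoke Proposition \ref{Proposition12.1}, whose hypotheses are met whenever $\beta>0$: writing $w=\theta_1+\ldots+\theta_k=An^{\beta}$ (cf. (\ref{Aalpha}), (\ref{smalldablu})) and $\theta_l=\alpha_l n^{\beta}$, it yields $\E(K_n^{(l)})=\alpha_l n^{\beta}\log\!\big(1+\tfrac{n^{1-\beta}}{A}\big)+O(1)$, and the three cases correspond simply to the behaviour of $n^{1-\beta}/A$. If $0<\beta<1$, then $n^{1-\beta}/A\to\infty$, so $\log(1+n^{1-\beta}/A)=(1-\beta)\log n+O(1)$ and hence $\E(K_n^{(l)})=\alpha_l(1-\beta)n^{\beta}\log n+o(n^{\beta}\log n)$, which is part (a) with $a_n=n^{\beta}\log n$; the remaining case $\beta=0$ of part (a) is exactly (\ref{4.1.6}) in Theorem \ref{TheoremKconstant}. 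If $\beta=1$, then $n^{1-\beta}/A=1/A$ and $\E(K_n^{(l)})=\alpha_l\log\tfrac{1+A}{A}\,n+O(1)$, which is part (b) with $a_n=n$. If $\beta>1$, then $n^{1-\beta}/A\to0$, so expanding $\log(1+u)=u+O(u^2)$ gives $\E(K_n^{(l)})=\tfrac{\alpha_l}{A}\,n+O(n^{2-\beta})+O(1)=\tfrac{\alpha_l}{A}\,n+o(n)$, which is part (c) with $a_n=n$.

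Combining the mean asymptotics with the automatic variance bound and Chebyshev's inequality as above will conclude the proof in all three cases. I expect the only subtle point to be part (c): there the leading behaviour of $\E(K_n^{(l)})$ comes from the cancellation $n^{\beta}\cdot n^{1-\beta}=n$ between the prefactor $\theta_l=\alpha_l n^{\beta}$ and the first-order Taylor coefficient of $\log(1+n^{1-\beta}/A)$, so one must check that the quadratic remainder (of order $n^{\beta}\cdot n^{2(1-\beta)}=n^{2-\beta}$) and the $O(1)$ error from Proposition \ref{Proposition12.1} are both $o(n)$ --- which is precisely where the assumption $\beta>1$ enters. In parts (a) and (b) no cancellation occurs and the estimates are routine.
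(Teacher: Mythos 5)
Your proposal is correct and follows essentially the same route as the paper: extract the mean asymptotics from Proposition \ref{Proposition12.1} (with the $\beta=0$ case covered by Theorem \ref{TheoremKconstant}), verify that the variance is negligible relative to $a_n^2$, and conclude by Chebyshev's inequality. The only cosmetic difference is that you shortcut the variance step via $\Var(K_n^{(l)})\le\E(K_n^{(l)})$ instead of computing the precise variance asymptotics in each regime as the paper does (which the paper also needs later for Theorem \ref{Theorem12.8}); this changes nothing in the argument.
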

\begin{proof}If the dependence of the mutation parameters on $n$ is given by equation (\ref{Thetan}), then the following asymptotic expressions for $\mathbb{E}\left(K_n^{(l)}\right)$ and $\Var\left(K_n^{(l)}\right)$ can be obtained from equations (\ref{12.2}) and (\ref{12.3}).\\
(a) If $0\leq\beta<1$, then
$$
\mathbb{E}\left(K_n^{(l)}\right)=\alpha_l(1-\beta)n^{\beta}\log(n)+O\left(n^{\beta}\right),
$$
and
$$
\Var\left(K_n^{(l)}\right)=\alpha_l(1-\beta)n^{\beta}\log(n)+O\left(n^{\beta}\right),
$$
as $n\rightarrow\infty$.\\
(b) If $\beta=1$, then
$$
\mathbb{E}\left(K_n^{(l)}\right)=\alpha_ln
\log\left(\frac{1+A}{A}\right)+O\left(1\right),
$$
where $A$ is defined by equation (\ref{Aalpha}), and
$$
\Var\left(K_n^{(l)}\right)=\alpha_ln
\log\left(\frac{1+A}{A}\right)-\frac{\alpha_l^2n}{A(1+A)}+O\left(1\right),
$$
as $n\rightarrow\infty$.\\
(c) If $\beta>1$, then
$$
\mathbb{E}\left(K_n^{(l)}\right)=\frac{\alpha_l}{A}n
+O\left(n^{2-\beta}\right),
$$
and
$$
\Var\left(K_n^{(l)}\right)=\frac{\alpha_l}{A}\left(1-\frac{\alpha_l}{A}\right)n
+O\left(n^{2-\beta}\right),
$$
as $n\rightarrow\infty$. Note that for each $\beta\geq 0$ the condition
\begin{equation}\label{Cheb1}
\frac{\Var\left(K_n^{(l)}\right)}{\left(\mathbb{E}\left(K_n^{(l)}\right)\right)^2}
\longrightarrow 0,\;\;\mbox{as}\;\; n\longrightarrow\infty,
\end{equation}
is satisfied. Recall that according to Chebyshev's inequality, for any fixed $\epsilon>0$
$$
\Prob\left(\left|K_n^{(l)}-\mathbb{E}\left(K_n^{(l)}\right)\right|>\epsilon\right)\leq
\frac{\Var\left(K_n^{(l)}\right)}{\epsilon^2}.
$$
Replacing $\epsilon$ by $\epsilon\mathbb{E}\left(K_n^{(l)}\right)$, we obtain
\begin{equation}\label{Cheb3}
\Prob\left(\left|\frac{K_n^{(l)}}{\mathbb{E}\left(K_n^{(l)}\right)}-1\right|>\epsilon\right)\leq
\frac{\Var\left(K_n^{(l)}\right)}{\epsilon^2\left(\mathbb{E}\left(K_n^{(l)}\right)\right)^2}.
\end{equation}
It follows from (\ref{Cheb1}) and (\ref{Cheb3}) that
\begin{equation}\label{Cheb4}
\frac{K_n^{(l)}}{\mathbb{E}\left(K_n^{(l)}\right)}\overset{P}{\longrightarrow} 1,
\end{equation}
as $n\rightarrow\infty$, for each $\beta\geq 0$. The results
in the statement of the Theorem follow from (\ref{Cheb4}), and from the convergence relations
\begin{equation}
\frac{\mathbb{E}\left(K_n^{(l)}\right)}{n^{\beta}\log(n)}
\longrightarrow \alpha_l(1-\beta),\;\;\;  0\leq\beta<1,
\end{equation}
\begin{equation}
\frac{\mathbb{E}\left(K_n^{(l)}\right)}{n}
\longrightarrow \alpha_l\log\left(\frac{1+A}{A}\right),\;\;\;  \beta=1,
\end{equation}
and
\begin{equation}
\frac{\mathbb{E}\left(K_n^{(l)}\right)}{n}
\longrightarrow \frac{\alpha_l}{A},\;\;\;  \beta>1,
\end{equation}
as $n\rightarrow\infty$.
\end{proof}
\begin{rem}
(a) Equation (\ref{Cheb1}) leads to the fact that $K_n^{(l)}$
is highly concentrated around its expectation in the case where the mutation parameters depend on $n$ as in equation
(\ref{Thetan}). However, if the mutation parameters are allowed to grow at different rates, then (\ref{Cheb1})
might not be true. For example, if $k=2$,
$\theta_1(n)=\alpha_1n^{1/2}$, and $\theta_2(n)=\alpha_2n^{3/2}$, then we obtain
$\mathbb{E}\left(K_n^{(l)}\right)=O(1)$, $\Var\left(K_n^{(l)}\right)=O(1)$, and (\ref{Cheb1})
is not valid.\\
(b)
If $k=1$, the asymptotic normality of $K_n^{(l)}$
as $n\rightarrow\infty$ (with a dependence of the mutation parameter on $n$) was established in Tsukuda \cite{Tsukuda}.
Theorem \ref{Theorem12.8} below presents the asymptotic normality of $K_n^{(l)}$ when the number $k$ of allele classes is arbitrary and the mutation parameters
$\theta_1$, $\ldots$, $\theta_k$ depend on the sample size $n$
as in equation (\ref{Thetan}).\\
(c) One can investigate the number of distinct values in a random sample $X_1$, $\ldots$, $X_n$ from the Dirichlet process, see Antoniak \cite{Antoniak}, Korwar and Hollander \cite{KorwarHollander}, and the book by  Ghosal and van der Vaart \cite{GhosalvanderVaart},
Section 4.1.5 for a detailed account and  further references. For $i\in\mathbb{N}$ define
$D_i=1$ if $X_i$ does not belong to $\left\{X_1,\ldots,X_{i-1}\right\}$, and set $D_i=0$ otherwise.
Then $K_n=\sum\limits_{i=1}^nD_i$ is the number of distinct values among the first $n$ observables.
If $k=1$, then the results of Theorem \ref{TheoremKconstant} can be understood as those for $K_n$
obtained in Proposition 4.8 of Ghosal and van der Vaart \cite{GhosalvanderVaart},
Section 4.1.5.  Furthermore, in the context of the Dirichlet process, the parameter $\theta$ of the classical Ewens formula is replaced by the total mass $M$ of the base measure, which can be small.  Ghosal and van der Vaart \cite{GhosalvanderVaart},
Section 4.1.5 give inequalities for $\mathbb{E}\left(K_n\right)$ and $\Var\left(K_n\right)$ valid for all $M$ and $n$.
\end{rem}

\begin{thm}\label{Theorem12.8}Let $K_n^{(l)}$, $l\in\{1,\ldots,k\}$, be the number of different alleles of class $l$ in a random sample of size $n$,
and assume that the mutation parameters  are given by equation (\ref{Thetan}).\\
(a) If $0<\beta\leq\frac{3}{2}$, then for each $l\in\left\{1,\ldots,k\right\}$
\begin{equation}
\frac{K_n^{(l)}-\theta_l\log\left(1+\frac{n}{w}\right)}{\sqrt{\theta_l\log\left(1+\frac{n}{w}\right)-
\frac{n\theta_l^2}{w
\left(w+n\right)}}}
\overset{D}{\longrightarrow}\mathcal{N}\left(0,1\right),
\end{equation}
as $n\rightarrow\infty$, where $w$ is the sum of the mutation parameters defined by equation (\ref{smalldablu}), and $\mathcal{N}(0,1)$ denotes a standard normal random variable.\\
(b) If $\beta>\frac{3}{2}$ and $k\geq 2$, then
\begin{equation}\label{11.3}
\frac{K_n^{(l)}-n\frac{\theta_l}{w}}{\sqrt{n\frac{\theta_l}{w}
\left(1-\frac{\theta_l}{w}\right)}}
\overset{D}{\longrightarrow}\mathcal{N}(0,1)
\end{equation}
is satisfied for each $l\in\left\{1,\ldots,k\right\}$, as $n\rightarrow\infty$.
\end{thm}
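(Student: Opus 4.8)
The plan is to use the representation (\ref{K3.1})--(\ref{K3.2}) established in Section \ref{SectionExactFormulasK}: for each fixed $n$, $K_n^{(l)}=\sum_{j=1}^{n}Y_j^{(l)}$ is a sum of \emph{independent} Bernoulli variables with $\Prob(Y_j^{(l)}=1)=p_j^{(l)}:=\theta_l/(w+j-1)$, $w=\theta_1+\ldots+\theta_k$. Since $\theta_1,\ldots,\theta_k$, hence $w$ and the $p_j^{(l)}$, now depend on $n$, this is a triangular array, and the natural tool is the Lyapunov central limit theorem for triangular arrays of row-wise independent summands. The argument then splits into a uniform control of the Lyapunov ratio and some bookkeeping that matches the stated centering and scaling constants to $\mathbb{E}(K_n^{(l)})$ and $\sqrt{\Var(K_n^{(l)})}$.

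For the Lyapunov bound I would use third absolute moments: for $Y\sim\mathrm{Bernoulli}(p)$ one has $\mathbb{E}|Y-p|^{3}=p(1-p)\bigl[(1-p)^{2}+p^{2}\bigr]\le p(1-p)$. Summing over $j$ and recalling $\Var(K_n^{(l)})=\sum_{j=1}^{n}p_j^{(l)}(1-p_j^{(l)})$ gives $\sum_{j=1}^{n}\mathbb{E}\bigl|Y_j^{(l)}-p_j^{(l)}\bigr|^{3}\le\Var(K_n^{(l)})$, so the Lyapunov ratio of order $3$ is at most $\Var(K_n^{(l)})^{-1/2}$. Hence, by Lyapunov's theorem, $\bigl(K_n^{(l)}-\mathbb{E}(K_n^{(l)})\bigr)/\sqrt{\Var(K_n^{(l)})}\overset{D}{\longrightarrow}\mathcal{N}(0,1)$ as soon as $\Var(K_n^{(l)})\to\infty$.

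What is left is to verify $\Var(K_n^{(l)})\to\infty$ in each regime and to compare the displayed normalizations with $\mathbb{E}(K_n^{(l)})$ and $\sqrt{\Var(K_n^{(l)})}$, using Proposition \ref{Proposition12.1}, the expansions derived in the proof of Theorem \ref{TheoremConvergenceInProbability}, and Lemma \ref{4.1.1}. In case (a), $0<\beta\le\tfrac32$: for $0<\beta<1$, $\Var(K_n^{(l)})=\alpha_l(1-\beta)n^{\beta}\log n+O(n^{\beta})$; for $\beta=1$, $\Var(K_n^{(l)})=\alpha_l n\bigl[\log\frac{1+A}{A}-\frac{\alpha_l}{A(1+A)}\bigr]+O(1)$ with the bracket strictly positive because $\log(1+\tfrac1A)>\frac1{1+A}\ge\frac{\alpha_l}{A(1+A)}$; for $1<\beta\le\tfrac32$, $\Var(K_n^{(l)})=\frac{\alpha_l}{A}\bigl(1-\frac{\alpha_l}{A}\bigr)n+O(n^{2-\beta})$ when $k\ge2$, and $\Var(K_n^{(l)})\sim\frac1{2\alpha_1}n^{2-\beta}$ when $k=1$ (from the two-term expansion of $\theta_l\mathcal{H}_n^{(1)}(w)-\theta_l^{2}\mathcal{H}_n^{(2)}(w)$ supplied by Lemma \ref{4.1.1}); in every subcase $\Var(K_n^{(l)})\to\infty$. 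Moreover the inequalities (\ref{12.4}) and (\ref{12.5}) show that $\theta_l\log(1+\tfrac nw)$ differs from $\mathbb{E}(K_n^{(l)})$ by $O(1)$ and that the quantity $\theta_l\log(1+\tfrac nw)-\frac{n\theta_l^{2}}{w(w+n)}$ differs from $\Var(K_n^{(l)})$ by $O(1)$; since $\Var(K_n^{(l)})\to\infty$, Slutsky's theorem turns the centered CLT above into the form claimed in part (a). In case (b), $\beta>\tfrac32$ and $k\ge2$: now $\frac{\theta_l}{w}=\frac{\alpha_l}{A}\in(0,1)$ is independent of $n$, $\mathbb{E}(K_n^{(l)})=n\frac{\theta_l}{w}+O(n^{2-\beta})$ and $\Var(K_n^{(l)})=n\frac{\theta_l}{w}\bigl(1-\frac{\theta_l}{w}\bigr)+O(n^{2-\beta})\to\infty$; since $\beta>\tfrac32$ forces $n^{2-\beta}=o(\sqrt n)$, the centering $n\theta_l/w$ and scaling $\sqrt{n\frac{\theta_l}{w}(1-\frac{\theta_l}{w})}$ agree with $\mathbb{E}(K_n^{(l)})$ and $\sqrt{\Var(K_n^{(l)})}$ up to $o(\sqrt n)$, and Slutsky's theorem again yields (\ref{11.3}).

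The main obstacle is getting the centering right for each regime. The threshold $\beta=\tfrac32$ is exactly the point where the $O(n^{2-\beta})$ error in $\mathbb{E}(K_n^{(l)})$ ceases to be $o\bigl(\sqrt{\Var(K_n^{(l)})}\bigr)=o(\sqrt n)$, which is why in part (b) one may center at $n\theta_l/w$ only for $\beta>\tfrac32$ whereas part (a) must retain the sharper centering $\theta_l\log(1+n/w)$. A subsidiary delicate point is the borderline $\beta=1$, where divergence of the variance rests on the elementary inequality $\log(1+1/A)>1/(1+A)$; everything else is the standard Lyapunov estimate combined with the bounds already in Lemma \ref{4.1.1} and Proposition \ref{Proposition12.1}.
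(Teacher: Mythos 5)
Your proof is correct, but it takes a genuinely different route from the paper. The paper proves both parts by computing the moment generating function of $(K_n^{(l)}-\Delta_l)/\sigma_l$ in closed form as a ratio of Gamma functions (using the same Bernoulli decomposition), then applying Stirling's formula and carrying out a delicate, regime-by-regime expansion of the logarithm of the mgf to show it converges to $t^2/2$; the threshold $\beta=3/2$ enters there through the condition $\theta_l/\sigma_l^3\to0$ needed to discard cubic terms in the expansion. You instead observe that for row-wise independent Bernoulli arrays the Lyapunov ratio of order $3$ is bounded by $\Var(K_n^{(l)})^{-1/2}$, so asymptotic normality with the \emph{exact} mean and variance holds in every regime where $\Var(K_n^{(l)})\to\infty$; the entire case analysis is then displaced into (i) checking divergence of the variance (your treatment of the borderline $\beta=1$ via $\log(1+1/A)>1/(1+A)$, and of $k=1$, $1<\beta\le 3/2$ via the two-term expansion giving $\Var\sim n^{2-\beta}/(2\alpha_1)$, is right) and (ii) a Slutsky step matching the stated centering and scaling to $\mathbb{E}(K_n^{(l)})$ and $\Var(K_n^{(l)})$, which is where the threshold $\beta=3/2$ genuinely lives in part (b), exactly as you say. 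Your argument is shorter, avoids Stirling entirely, and makes transparent why the paper's Remark (c) (the Poisson-type limit for $k=1$, $n^2/\theta_1\to c$) is the complementary case: there $\Var(K_n^{(l)})\to c/2$ stays bounded and the Lyapunov criterion fails. What the paper's mgf computation buys in exchange is an explicit expression, equation (\ref{4.2.11}), that could in principle be pushed further to identify the non-Gaussian limits when the variance does not diverge.
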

\begin{rem}
(a) If $k=1$, then $\frac{\theta_l}{w}=1$, and
(\ref{11.3}) does not hold true. For the results on $K_n^{(l)}$ in the case $k=1$ we refer the reader
to Theorem 2 and Theorem 3 in Tsukuda \cite{Tsukuda}.\\
(b) If $\beta=0$, then the convergence in distribution
to a standard normal random variable is given by equation
(\ref{4.1.7}).\\
(c) If $k=1$, and $n^2/\theta_1\rightarrow c>0$ as $n\rightarrow\infty$, then Tsukuda \cite{Tsukuda} showed that
\begin{equation}
\frac{K_n^{(l)}-\theta_1\log\left(1+\frac{n}{\theta_1}\right)}{\sqrt{\theta_1\left(\log\left(1+\frac{n}{\theta_1}\right)-\frac{n}{n+\theta_1}\right)}}\overset{D}{\longrightarrow}
\frac{c/2-P_{c/2}}{\sqrt{c/2}},
\end{equation}
where $P_{c/2}$ is a Poisson variable with mean $c/2$. However, Theorem \ref{Theorem12.8}, (b) says that for $k\geq 2$
we still have convergence in distribution to a standard normal random variable.
\end{rem}
\begin{proof}
The proof here is an extension
of that in Tsukuda \cite{Tsukuda}, Section 3.2 to the case of an arbitrary number $k$ of mutation parameters.\\
(a) Assume that $0<\beta\leq\frac{3}{2}$.
Set
\begin{equation}\label{12.9}
\Delta_l=\theta_l\log\left(1+\frac{n}{w}\right),\;\;\sigma_l^2=\theta_l\log\left(1+\frac{n}{w}\right)
-\frac{n\theta_l^2}{w(w+n)}.
\end{equation}
We note that
\begin{equation}\label{12.10}
\Delta_l=\sigma_l^2+\frac{n\theta_l^2}{w(w+n)}.
\end{equation}

Using (\ref{K3.1}) and independence of the Bernoulli random variables
$Y_1^{(l)}$, $\ldots$, $Y_n^{(l)}$ we find
\begin{equation}\label{4.2.11}
\mathbb{E}\left(e^{\frac{K_n^{(l)}-\Delta_l}{\sigma_l}t}\right)=
e^{-\frac{\Delta_l}{\sigma_l}t}\frac{\Gamma\left(\theta_l\left(e^{\frac{t}{\sigma_l}}-1\right)+w+n\right)}{\Gamma\left(w+n\right)}
\frac{\Gamma(w)}{\Gamma\left(\theta_l\left(e^{\frac{t}{\sigma_l}}-1\right)+w\right)}.
\end{equation}
Application of the Stirling formula gives
\begin{equation}\label{4.2.12}
\begin{split}
&\frac{\Gamma\left(\theta_l\left(e^{\frac{t}{\sigma_l}}-1\right)+w+n\right)}{\Gamma\left(w+n\right)}
\frac{\Gamma(w)}{\Gamma\left(\theta_l\left(e^{\frac{t}{\sigma_l}}-1\right)+w\right)}\\
&\sim\frac{\left(\theta_l\left(e^{\frac{t}{\sigma_l}}-1\right)+w+n\right)^{\theta_l\left(e^{\frac{t}{\sigma_l}}-1\right)+w+n-1}}{(w+n)^{w+n-1}}\;
\frac{w^{w-1}}{\left(\theta_l\left(e^{\frac{t}{\sigma_l}}-1\right)+w\right)^{\theta_l\left(e^{\frac{t}{\sigma_l}}-1\right)+w-1}},
\end{split}
\end{equation}
as $n\rightarrow\infty$.
The right-hand side of  (\ref{4.2.12}) can be rewritten as
\begin{equation}\label{12.11}
\begin{split}
&\left[1+\frac{n}{w}\right]^{\theta_l\left(e^{t/\sigma_l}-1\right)}
\left[1+\frac{\theta_l\left(e^{t/\sigma_l}-1\right)}{w+n}\right]
^{\theta_l\left(e^{t/\sigma_l}-1\right)+w+n-1}\\
&\times
\left[1+\frac{\theta_l}{w}\left(e^{t/\sigma_l}-1\right)\right]
^{-\theta_l\left(e^{t/\sigma_l}-1\right)-w+1}.
\end{split}
\end{equation}
The equations (\ref{4.2.11}), (\ref{4.2.12}), (\ref{12.9})-(\ref{12.11}) imply
\begin{equation}\label{12.12}
\begin{split}
&\log\left[\mathbb{E}\left(e^{\frac{K_n^{(l)}-\Delta_l}{\sigma_l}t}\right)\right]
\sim-\sigma_l t-\frac{n\theta_l^2t}{\sigma_l w(w+n)}
+\sigma_l^2\left(e^{t/\sigma_l}-1\right)\\
&+\frac{n\theta_l^2\left(e^{t/\sigma_l}-1\right)}{w(w+n)}
+\left[\theta_l\left(e^{t/\sigma_l}-1\right)+n-1+w\right]
\log\left[1+\frac{\theta_l\left(e^{t/\sigma_l}-1\right)}{w+n}\right]\\
&-\left[\theta_l\left(e^{t/\sigma_l}-1\right)+w-1\right]
\log\left[1+\frac{\theta_l}{w}\left(e^{t/\sigma_l}-1\right)\right].\\
&=-\sigma_l t-\frac{n\theta_l^2t}{\sigma_l w(w+n)}
+\sigma_l^2\left(e^{t/\sigma_l}-1\right)\\
&+\frac{n\theta_l^2\left(e^{t/\sigma_l}-1\right)}{w(w+n)}
+\left[\theta_l\left(e^{t/\sigma_l}-1\right)+n+w\right]
\log\left[1+\frac{\theta_l\left(e^{t/\sigma_l}-1\right)}{w+n}\right]\\
&-\left[\theta_l\left(e^{t/\sigma_l}-1\right)+w\right]
\log\left[1+\frac{\theta_l}{w}\left(e^{t/\sigma_l}-1\right)\right]+o(1),
\end{split}
\end{equation}
as $n\rightarrow\infty$.

Let us consider the case where $0<\beta<3/2$.
In the following, it is important that if $0<\beta<3/2$, then $\theta_l/\sigma_l^3\rightarrow 0$
as $n\rightarrow\infty$. This follows from equation (\ref{12.9}), from
(\ref{12.3}), and from explicit asymptotic formulae for the variance of $K_n^{(l)}$ in the proof of Theorem \ref{TheoremConvergenceInProbability}.
We can write
\begin{equation}\label{12.13}
\begin{split}
&\left[\theta_l\left(e^{t/\sigma_l}-1\right)+n+w\right]
\log\left[1+\frac{\theta_l\left(e^{t/\sigma_l}-1\right)}{w+n}\right]\\
&=\left[\theta_l\left(e^{t/\sigma_l}-1\right)+n+w\right]
\frac{\theta_l\left(e^{t/\sigma_l}-1\right)}{w+n}-\left[\theta_l\left(e^{t/\sigma_l}-1\right)+n+w\right]
\frac{\theta_l^2\left(e^{t/\sigma_l}-1\right)^2}{2(w+n)^2}+o(1)\\
&=\theta_l\left(e^{t/\sigma_l}-1\right)+\frac{1}{2}\frac{\theta_l^2\left(e^{t/\sigma_l}-1\right)^2}{w+n}+o(1),
\end{split}
\end{equation}
since $\frac{\theta_l^3(e^{t/\sigma_l}-1)^3}{2(w+n)^2}=o(1)$
provided that $\theta_l/\sigma_l^3\rightarrow 0$
as $n\rightarrow\infty$.
Also,
\begin{equation}\label{12.14}
\begin{split}
&-\left[\theta_l\left(e^{t/\sigma_l}-1\right)+w\right]
\log\left[1+\frac{\theta_l\left(e^{t/\sigma_l}-1\right)}{w}\right]\\
&=-\theta_l\left(e^{t/\sigma_l}-1\right)-\frac{1}{2}\frac{\theta_l^2\left(e^{t/\sigma_l}-1\right)^2}{w}+o(1),
\end{split}
\end{equation}
as $n\rightarrow\infty$. Taking (\ref{12.13}), (\ref{12.14})
into account we rewrite (\ref{12.12}) as
\begin{equation}\label{12.15}
\begin{split}
&\log\left[\mathbb{E}\left(e^{\frac{K_n^{(l)}-\Delta_l}{\sigma_l}t}\right)\right]
\sim-\sigma_l t
+\sigma_l^2\left(e^{t/\sigma_l}-1\right)-\frac{n\theta_l^2t}{\sigma_l w(w+n)}\\
&+\frac{n\theta^2_l\left(e^{t/\sigma_l}-1\right)}{w(w+n)}
-\frac{n\theta^2_l\left(e^{t/\sigma_l}-1\right)^2}{2w(w+n)}.
\end{split}
\end{equation}
The first two terms in the right-hand side of equation (\ref{12.15}) give
\begin{equation}
-\sigma_l t
+\sigma_l^2\left(e^{t/\sigma_l}-1\right)=\frac{t^2}{2}+o(1),
\end{equation}
as $n\rightarrow\infty$. It is easy to check that the last three terms on the right-hand side of equation (\ref{12.15}) are equal to $o(1)$, as $n\rightarrow\infty$. We conclude that
\begin{equation}\label{LogGeneration}
\log\left[\mathbb{E}\left(e^{\frac{K_n^{(l)}-\Delta_l}{\sigma_l}t}\right)\right]
\sim \frac{t^2}{2},
\end{equation}
as $n\rightarrow\infty$. This implies the statement of Theorem
\ref{Theorem12.8} in the case $0<\beta<3/2$.

If $\beta=3/2$, then we rewrite the two last terms in the right-hand side of
(\ref{12.12}) as
\begin{equation}\label{Exlog}
\begin{split}
&(w+n)\log\left[1+\frac{\theta_l\left(e^{t/\sigma_l}-1\right)}{w+n}\right]-w
\log\left[1+\frac{\theta_l}{w}\left(e^{t/\sigma_l}-1\right)\right]\\
&+
\theta_l\left(e^{t/\sigma_l}-1\right)\log\left[1+\frac{\theta_l\left(e^{t/\sigma_l}-1\right)}{w+n}\right]
-\theta_l\left(e^{t/\sigma_l}-1\right)
\log\left[1+\frac{\theta_l}{w}\left(e^{t/\sigma_l}-1\right)\right]\\
\end{split}
\end{equation}
As $n\rightarrow\infty$, the first two terms in the expression above are equal to
$$
\frac{n\theta_l^2\left(e^{t/\sigma_l}-1\right)^2}{2w(w+n)} +o(1),
$$
and the third and the fourth terms  in (\ref{Exlog}) are equal to
$$
-\frac{n\theta_l^2\left(e^{t/\sigma_l}-1\right)^2}{w(w+n)} +o(1).
$$
Therefore, equations (\ref{12.15})-(\ref{LogGeneration})  still hold true
in the case $\beta=3/2$, which completes the proof of Theorem \ref{Theorem12.8}, (a).

(b) Assume that $\beta>3/2$.
We use the formula
\begin{equation}\label{11.4}
\begin{split}
&\mathbb{E}\left(e^{\frac{K_n^{(l)}-\tilde{\Delta}_l}{\tilde{\sigma}_l}t}\right)
\sim e^{-\frac{\tilde{\Delta}_l}{\tilde{\sigma}_l}t}
\frac{\left(\theta_l\left(e^{t/\tilde{\sigma}_l}-1\right)+w+n\right)^{\theta_l\left(e^{t/\tilde{\sigma}_l}-1\right)+w+n-1}}{(w+n)^{w+n-1}}\\
&\times
\frac{w^{w-1}}{\left(\theta_l\left(e^{t/\tilde{\sigma}_l}-1\right)+w\right)^{\theta_l\left(e^{t/\tilde{\sigma}_l}-1\right)+w-1}},
\end{split}
\end{equation}
where
\begin{equation}
\tilde{\Delta}_l=\frac{\theta_l}{w},\;\;\;\tilde{\sigma}_l^2=n\frac{\theta_l}{w}
\left(1-\frac{\theta_l}{w}\right).
\end{equation}
We note that
\begin{equation}
\begin{split}
&\frac{\left[\theta_l\left(e^{t/\tilde{\sigma}_l}-1\right)+w+n\right]^{\theta_l\left(e^{t/\tilde{\sigma}_l}-1\right)+w+n-1}}{\left[\theta_l\left(e^{t/\tilde{\sigma}_l}-1\right)+w\right]^{\theta_l\left(e^{t/\tilde{\sigma}_l}-1\right)+w-1}}\\
&=\left[1+\frac{n}{\theta_l\left(e^{t/\tilde{\sigma}_l}-1\right)+w}\right]^{\theta_l\left(e^{t/\tilde{\sigma}_l}-1\right)+w-1}
\left[\theta_l\left(e^{t/\tilde{\sigma}_l}-1\right)+w+n\right]^n.
\end{split}
\end{equation}
Therefore, we can rewrite equation  (\ref{11.4}) as
\begin{equation}\label{11.5}
\begin{split}
&\mathbb{E}\left(e^{\frac{K_n^{(l)}-\tilde{\Delta}_l}{\tilde{\sigma}_l}t}\right)
\sim e^{-\frac{\tilde{\Delta}_l}{\tilde{\sigma}_l}t}
\left[1+\frac{n}{\theta_l\left(e^{t/\tilde{\sigma}_l}-1\right)+w}\right]^{\theta_l\left(e^{t/\tilde{\sigma}_l}-1\right)+w-1}\\
&\times\left[1+\frac{\theta_l\left(e^{t/\tilde{\sigma}_l}-1\right)}{w+n}\right]^n
\left(1+\frac{n}{w}\right)^{w-1}.
\end{split}
\end{equation}
The crucial observation is that $\tilde{\sigma}_l\sim n^{1/2}$, and that
\begin{equation}
n\log\left[1+\frac{\theta_l\left(e^{t/\tilde{\sigma}_l}-1\right)}{w+n}\right]
=n\frac{\theta_lt}{w\tilde{\sigma}_l}+n\frac{\theta_lt^2}{2w\tilde{\sigma}_l^2}
-\frac{n\theta_l^2t^2}{2w^2\tilde{\sigma}_l^2}+o(1)=\frac{\tilde{\Delta}_l t}{\tilde{\sigma}_l}+\frac{t^2}{2}+o(1),
\end{equation}
as $n\rightarrow\infty$. In addition, it is easy to see that
under the assumption $\beta>3/2$
\begin{equation}
\left[\theta_l\left(e^{t/\tilde{\sigma}_l}-1\right)+w-1\right]\log\left[1+
\frac{n}{\theta_l\left(e^{t/\tilde{\sigma}_l}-1\right)+w}\right]
-(w-1)\log\left[1+\frac{n}{w}\right]=o(1),
\end{equation}
as $n\rightarrow\infty$. Therefore,  (\ref{LogGeneration})
holds true, which implies (\ref{11.3}).
Theorem \ref{Theorem12.8} is proved.
\end{proof}

%%%%%%%%%%%%%%%%%%%%%%%%%%%%%%%%%%%%%%%%%%%%%%%%%%%%%%%%%%%%%%%%%%%%%%%%%
%%%%%%%%%%%%%%%%%%%%%%%%%%%%%%%%%%%%%%%%%%%%%%%%%%%%%%%%%%%%%%%%%%%%%%%%%
%%%%%%%%%%%%%%%%%%%%%%%%%%%%%%%%%%%%%%%%%%%%%%%%%%%%%%%%%%%%%%%%%%%%%%%%%
%%%%%%%%%%%%%%%%%%%%%%%%%%%%%%%%%%%%%%%%%%%%%%%%%%%%%%%%%%%%%%%%%%%%%%%%%
%%%%%%%%%%%%%%%%%%%%%%%%%%%%%%%%%%%%%%%%%%%%%%%%%%%%%%%%%%%%%%%%%%%%%%%%%
%%%%%%%%%%%%%%%%%%%%%%%%%%%%%%%%%%%%%%%%%%%%%%%%%%%%%%%%%%%%%%%%%%%%%%%%%
%%%%%%%%%%%%%%%%%%%%%%%%%%%%%%%%%%%%%%%%%%%%%%%%%%%%%%%%%%%%%%%%%%%%%%%%%
%%%%%%%%%%%%%%%%%%%%%%%%%%%%%%%%%%%%%%%%%%%%%%%%%%%%%%%%%%%%%%%%%%%%%%%%%
%%%%%%%%%%%%%%%%%%%%%%%%%%%%%%%%%%%%%%%%%%%%%%%%%%%%%%%%%%%%%%%%%%%%%%%%%
%%%%%%%%%%%%%%%%%%%%%%%%%%%%%%%%%%%%%%%%%%%%%%%%%%%%%%%%%%%%%%%%%%%%%%%%%
%%%%%%%%%%%%%%%%%%%%%%%%%%%%%%%%%%%%%%%%%%%%%%%%%%%%%%%%%%%%%%%%%%%%%%%%%
%%%%%%%%%%%%%%%%%%%%%%%%%%%%%%%%%%%%%%%%%%%%%%%%%%%%%%%%%%%%%%%%%%%%%%%%%
%%%%%%%%%%%%%%%%%%%%%%%%%%%%%%%%%%%%%%%%%%%%%%%%%%%%%%%%%%%%%%%%%%%%%%%%%
%%%%%%%%%%%%%%%%%%%%%%%%%%%%%%%%%%%%%%%%%%%%%%%%%%%%%%%%%%%%%%%%%%%%%%%%%
%%%%%%%%%%%%%%%%%%%%%%%%%%%%%%%%%%%%%%%%%%%%%%%%%%%%%%%%%%%%%%%%%%%%%%%%%
%%%%%%%%%%%%%%%%%%%%%%%%%%%%%%%%%%%%%%%%%%%%%%%%%%%%%%%%%%%%%%%%%%%%%%%%%
%%%%%%%%%%%%%%%%%%%%%%%%%%%%%%%%%%%%%%%%%%%%%%%%%%%%%%%%%%%%%%%%%%%%%%%%%
%%%%%%%%%%%%%%%%%%%%%%%%%%%%%%%%%%%%%%%%%%%%%%%%%%%%%%%%%%%%%%%%%%%%%%%%%
%%%%%%%%%%%%%%%%%%%%%%%%%%%%%%%%%%%%%%%%%%%%%%%%%%%%%%%%%%%%%%%%%%%%%%%%%
%%%%%%%%%%%%%%%%%%%%%%%%%%%%%%%%%%%%%%%%%%%%%%%%%%%%%%%%%%%%%%%%%%%%%%%%%
\section{Representation in terms of random matrices with
Poisson entries}\label{SectionRepresentationPoisson}
Denote by $A(n)$ a random matrix of size $n\times k$ whose entries
$A_j^{(l)}(n)$ (where $j=1,\ldots, n$ and $l=1,\ldots,k$)
are random variables taking values in the set $\left\{0,1,2,\ldots\right\}$
of positive integers. Assume that the joint probability distribution
of $A_j^{(l)}(n)$ is given by the refined Ewens sampling formula
(equation (\ref{RefinedESF})). Then $A_j^{(l)}(n)$ represents the number
of alleles of class $l$ that appear $j$ times in the sample of size $n$.
\begin{prop}\label{PROPOSITION9.4}
Let
\begin{equation}\label{Data999}
\left(
\begin{array}{ccc}
  a_1^{(1)} & \ldots & a_1^{(k)} \\
  \vdots &  &  \\
  a_n^{(1)} & \ldots & a_n^{(k)}
\end{array}
\right),
\end{equation}
be a fixed matrix whose entries are non-negative integers satisfying the condition
\begin{equation}\label{NormalizationEntries}
\sum\limits_{l=1}^k\sum\limits_{j=1}^nja_j^{(l)}=n.
\end{equation}
Then for each $n\geq 2$
\begin{equation}
\begin{split}
&\Prob\left(A(n)=\left(
\begin{array}{ccc}
  a_1^{(1)} & \ldots & a_1^{(k)} \\
  \vdots &  &  \\
  a_n^{(1)} & \ldots & a_n^{(k)}
\end{array}
\right)\right)\\
&=\Prob\left(
\left(\begin{array}{ccc}
  \eta_1(\theta_1) &  \ldots & \eta_1(\theta_k) \\
  \vdots &  &  \\
  \eta_n(\theta_1) & \ldots & \eta_n(\theta_k)
\end{array}
\right)=\left(
\begin{array}{ccc}
  a_1^{(1)} & \ldots & a_1^{(k)} \\
  \vdots &  &  \\
  a_n^{(1)} & \ldots & a_n^{(k)}
\end{array}
\right)\biggl\vert\sum\limits_{l=1}^k\sum\limits_{j=1}^{n}j\eta_j(\theta_l)=n\right).
\end{split}
\end{equation}
Here $\eta_j(\theta_l)$ are the independent Poisson random variables with mean $\theta_l/j$, i.e.
\begin{equation}\label{9.4.0}
\Prob\left\{\eta_j(\theta_l)=p\right\}=e^{-\theta_l/j}\left(\frac{\theta_l}{j}\right)^p\frac{1}{p!},\;\; p=0,1,2,\ldots .
\end{equation}
\end{prop}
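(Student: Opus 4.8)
The plan is to establish the identity by computing the conditional probability on the right-hand side directly and checking that it reproduces the refined Ewens sampling formula (\ref{RefinedESF}).

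First I would write out, using independence and (\ref{9.4.0}), the unconditioned joint law of the Poisson array:
\begin{equation}
\Prob\left(\eta_j(\theta_l)=a_j^{(l)};\ l=1,\ldots,k;\ j=1,\ldots,n\right)
=\prod_{l=1}^k\prod_{j=1}^n e^{-\theta_l/j}\left(\frac{\theta_l}{j}\right)^{a_j^{(l)}}\frac{1}{a_j^{(l)}!}
=e^{-\theta H_n}\prod_{l=1}^k\prod_{j=1}^n\left(\frac{\theta_l}{j}\right)^{a_j^{(l)}}\frac{1}{a_j^{(l)}!},
\end{equation}
where $\theta=\theta_1+\ldots+\theta_k$ and $H_n=\sum_{j=1}^n j^{-1}$. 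The point is that the prefactor $e^{-\theta H_n}$ does not depend on the matrix $(a_j^{(l)})$. Hence the conditional probability of $\{\eta_j(\theta_l)=a_j^{(l)}\ \forall j,l\}$ given $\{\sum_{l,j}j\eta_j(\theta_l)=n\}$ is the above expression restricted to $\sum_{l,j}ja_j^{(l)}=n$, divided by the normalization $\Prob(\sum_{l,j}j\eta_j(\theta_l)=n)$.

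It then remains to evaluate $\Prob(T_n=n)$, where $T_n=\sum_{l=1}^k\sum_{j=1}^n j\,\eta_j(\theta_l)$. I would do this through the probability generating function: by independence of the $\eta_j(\theta_l)$,
\begin{equation}
\E\!\left[x^{T_n}\right]=\prod_{l=1}^k\prod_{j=1}^n\exp\!\left(\frac{\theta_l}{j}\left(x^j-1\right)\right)
=e^{-\theta H_n}\exp\!\left(\theta\sum_{j=1}^n\frac{x^j}{j}\right).
\end{equation}
Since the polynomial $\exp\!\big(\theta\sum_{j=1}^n x^j/j\big)$ has the same coefficients of $x^0,\ldots,x^n$ as the power series $\exp\!\big(\theta\sum_{j\ge 1} x^j/j\big)=(1-x)^{-\theta}$ — the tail terms $x^j/j$ with $j>n$ affect only higher powers — the coefficient of $x^n$ equals $[x^n](1-x)^{-\theta}=(\theta)_n/n!$. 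Therefore $\Prob(T_n=n)=e^{-\theta H_n}(\theta)_n/n!$, which is strictly positive, so the conditioning is legitimate.

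Substituting this value into the quotient cancels the factor $e^{-\theta H_n}$ between numerator and denominator and leaves
\begin{equation}
\mathbf{1}\left(\sum_{l=1}^{k}\sum_{j=1}^{n}ja_j^{(l)}=n\right)\frac{n!}{(\theta_1+\ldots+\theta_k)_n}\prod_{l=1}^k\prod_{j=1}^n\left(\frac{\theta_l}{j}\right)^{a_j^{(l)}}\frac{1}{a_j^{(l)}!},
\end{equation}
which is exactly $\Prob\big(A(n)=(a_j^{(l)})\big)$ by (\ref{RefinedESF}); this completes the proof. The only step that deserves a word of care is the truncation remark used to identify the $x^n$-coefficient with $(\theta)_n/n!$, but that is immediate; everything else is bookkeeping, and the restriction $n\ge 2$ is not essential for this computation.
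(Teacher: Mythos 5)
Your proof is correct and follows essentially the same route as the paper: both compute the conditional probability as the ratio of the unconditioned Poisson probability to $\Prob\bigl(\sum_{l,j}j\eta_j(\theta_l)=n\bigr)$, observe that the exponential prefactors cancel, and identify the normalization with $(\theta_1+\ldots+\theta_k)_n/n!$. The only (harmless) difference is that you derive that constant from the generating function $(1-x)^{-\theta}$, whereas the paper reads it off from the already-established fact that the refined Ewens sampling formula sums to one (Proposition \ref{PropositionIndeedDistribution}).
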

\begin{proof}
The fact that (\ref{RefinedESF}) defines a probability distribution on the set of all matrices (\ref{Data999})
satisfying (\ref{NormalizationEntries}) implies
\begin{equation}\label{9.5}
\sum\limits_{b_j^{(l)}:\;\sum\limits_{l=1}^k\sum\limits_{j=1}^njb_j^{(l)}=n}
\prod_{l=1}^{k}\prod\limits_{j=1}^n\frac{\left(\theta_l/j\right)^{b_j^{(l)}}}{(b_j^{(l)})!}=
\frac{\left(\theta_1+\ldots+\theta_k\right)_n}{n!}.
\end{equation}
Note that the left-hand side of (\ref{9.5}) can be understood as
$$
\Prob\left\{\sum\limits_{l=1}^k\sum\limits_{j=1}^nj\eta_j(\theta_l)=n\right\}.
$$
We will show that
\begin{equation}\label{9.6}
\begin{split}
&\Prob\left\{A_j^{(l)}(n)=a_j^{(l)};\; j=1,\ldots,n;\; l=1,\ldots,k\right\}\\
&=
\Prob\left\{\eta_j(\theta_l)=a_j^{(l)};\; j=1,\ldots,n;\;l=1,\ldots,k\biggl\vert
\sum\limits_{l=1}^k\sum\limits_{j=1}^nj\eta_j(\theta_l)=n\right\}.
\end{split}
\end{equation}
Indeed, the right-hand  side of equation (\ref{9.6}) can be computed as
\begin{equation}\label{9.7}
\begin{split}
&
\Prob\left\{\eta_j(\theta_l)=a_j^{(l)};\; j=1,\ldots,n;\;l=1,\ldots,k\biggl\vert
\sum\limits_{l=1}^k\sum\limits_{j=1}^nj\eta_j(\theta_l)=n\right\}\\
&=\frac{\Prob\left\{\eta_j(\theta_l)=a_j^{(l)};\;j=1,\ldots,n;\;l=1,\ldots,k\right\}}{\Prob\left\{\sum\limits_{l=1}^k\sum\limits_{j=1}^nj\eta_j(\theta_l)=n\right\}}\\
&=\frac{n!}{\left(\theta_1+\ldots+\theta_k\right)_n}
\prod\limits_{l=1}^k\prod\limits_{j=1}^n\frac{\left(\theta_l/j\right)^{a_j^{(l)}}}{\left(a_j^{(l)}\right)!}e^{-\theta_l/j},
\end{split}
\end{equation}
which is equal to the probability in the left-hand side of equation (\ref{9.6}).
\end{proof}
Our next result is Theorem \ref{TheoremConvergencePoisson} which says that as $n\rightarrow\infty$ the random matrix $A(n)$ can be approximated by a matrix whose entries are independent Poisson random variables.
\begin{thm}\label{TheoremConvergencePoisson}
For any fixed strictly positive integer $m$, it holds that
\begin{equation}
\left(
\begin{array}{ccc}
  A_1^{(1)}(n) & \ldots & A_1^{(k)}(n) \\
  \vdots &  &  \\
  A_m^{(1)}(n) & \ldots & A_m^{(k)}(n)
\end{array}
\right)\overset{D}{\longrightarrow}
\left(\begin{array}{ccc}
  \eta_1(\theta_1) &  \ldots & \eta_1(\theta_k) \\
  \vdots &  &  \\
  \eta_m(\theta_1) & \ldots & \eta_m(\theta_k)
\end{array}
\right),
\end{equation}
as $n\rightarrow\infty$. Here, $\eta_j(\theta_l)$ are the independent Poisson random variables with mean $\theta_l/j$.
\end{thm}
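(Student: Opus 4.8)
The plan is to use the method of moments, or equivalently pointwise convergence of probability mass functions, exploiting the conditioning representation already established in Proposition \ref{PROPOSITION9.4}. Fix $m\geq 1$ and fix a target matrix $(a_j^{(l)})_{1\le j\le m,\ 1\le l\le k}$ of non-negative integers. I would show directly that
\[
\Prob\left(A_j^{(l)}(n)=a_j^{(l)};\ 1\le j\le m,\ 1\le l\le k\right)\longrightarrow \prod_{l=1}^k\prod_{j=1}^m e^{-\theta_l/j}\left(\frac{\theta_l}{j}\right)^{a_j^{(l)}}\frac{1}{a_j^{(l)}!},
\]
which is exactly the joint pmf of the independent Poisson array $(\eta_j(\theta_l))$; since the limiting law is a probability distribution on a discrete space, pointwise convergence of pmf's gives convergence in distribution.

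To compute the left-hand side I would sum the refined Ewens formula (\ref{RefinedESF}) over all admissible values of the ``tail'' entries $a_j^{(l)}$ with $m+1\le j\le n$ (all $l$), subject to the normalization $\sum_l\sum_j j a_j^{(l)}=n$. Writing $r=n-\sum_{l}\sum_{j=1}^m j a_j^{(l)}$ for the residual mass carried by the tail, the factor $\frac{n!}{(\theta_1+\ldots+\theta_k)_n}\prod_{l}\prod_{j=1}^m(\theta_l/j)^{a_j^{(l)}}/a_j^{(l)}!$ pulls out, and the remaining sum over the tail entries equals, by (\ref{9.5}) applied to a population of size $r$ with the same parameters,
\[
\sum_{\substack{b_j^{(l)}\ge 0\\ \sum_l\sum_{j=m+1}^n j b_j^{(l)}=r}}\ \prod_{l=1}^k\prod_{j=m+1}^n\frac{(\theta_l/j)^{b_j^{(l)}}}{b_j^{(l)}!}\ \le\ \frac{(\theta_1+\ldots+\theta_k)_r}{r!},
\]
with equality holding up to the (vanishing, for $n$ large relative to $m$) contribution of partitions using parts larger than $n$. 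Equivalently, and more cleanly, Proposition \ref{PROPOSITION9.4} lets me write the target probability as a ratio of two Poisson probabilities, $\Prob\{\eta_j(\theta_l)=a_j^{(l)},\ j\le m\}\cdot\Prob\{\sum_l\sum_{j=m+1}^n j\eta_j(\theta_l)=r\}\big/\Prob\{\sum_l\sum_{j=1}^n j\eta_j(\theta_l)=n\}$, so everything reduces to the asymptotics of the normalizing probabilities $Q_n:=\Prob\{\sum_l\sum_{j=1}^n j\eta_j(\theta_l)=n\}=\frac{(\theta_1+\ldots+\theta_k)_n}{n!}\,e^{-(\theta_1+\ldots+\theta_k)\mathcal H_n^{(1)}(1)}$ coming from (\ref{9.5}).

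The key analytic step, then, is to show that the ratio of partial-sum tails behaves well, i.e. that
\[
\frac{\Prob\left\{\sum_{l=1}^k\sum_{j=m+1}^{n} j\,\eta_j(\theta_l)=n-s\right\}}{\Prob\left\{\sum_{l=1}^k\sum_{j=1}^{n} j\,\eta_j(\theta_l)=n\right\}}\ \longrightarrow\ \prod_{l=1}^k\prod_{j=1}^m e^{-\theta_l/j}
\]
for each fixed $s$; combined with the explicit finite factor $\prod_l\prod_{j=1}^m(\theta_l/j)^{a_j^{(l)}}/a_j^{(l)}!$ this yields the claim. This is precisely the content of the Arratia--Barbour--Tavar\'e analysis of the Ewens sampling formula, specialized here to $k$ independent families; one route is a local limit theorem for the random variable $T_{m,n}=\sum_l\sum_{j=m+1}^n j\eta_j(\theta_l)$, whose mean is $n-O(1)$ and whose distribution is smooth enough (its support generates $\mathbb Z$ for $n$ large) that $\Prob\{T_{m,n}=n-s\}/\Prob\{T_{0,n}=n\}\to e^{-\sum_l\sum_{j=1}^m\theta_l/j}$ uniformly for $s$ in a bounded range. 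I expect this local-limit / tail-ratio estimate to be the main obstacle; everything else is bookkeeping with Pochhammer symbols and the identity (\ref{9.5}). An alternative that sidesteps the local limit theorem is the coupling argument of Arratia, Barbour and Tavar\'e: realize $(A_j^{(l)}(n))_{j\le m}$ and $(\eta_j(\theta_l))_{j\le m}$ on a common space and bound the total variation distance by the probability that the conditioning event forces a discrepancy, which tends to $0$; I would cite \cite{ArratiaBarbourTavarePoisson} for the single-class case and note that the $k$-class version follows because the families indexed by $l$ are independent and the conditioning couples them only through a single linear constraint.
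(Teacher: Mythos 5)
Your overall strategy coincides with the paper's: prove pointwise convergence of the joint pmf, use Proposition \ref{PROPOSITION9.4} to write the target probability as $\Prob\{\eta_j(\theta_l)=a_j^{(l)},\ j\le m,\ l\le k\}$ times the ratio $\Prob\{\sum_l\sum_{j=m+1}^n j\eta_j(\theta_l)=n-s\}\big/\Prob\{\sum_l\sum_{j=1}^n j\eta_j(\theta_l)=n\}$, and then control that ratio by generating-function asymptotics. But the value you assign to the key limit is wrong, and it is not a cosmetic slip: in your decomposition the first factor is already the \emph{full} Poisson pmf, exponential normalizations $e^{-\theta_l/j}$ included, so the theorem requires the ratio to tend to $1$ --- which is exactly what the paper proves --- not to $\prod_{l}\prod_{j\le m}e^{-\theta_l/j}$. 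Concretely, with $\Omega=\theta_1+\ldots+\theta_k$ and $H_r=\sum_{j=1}^r 1/j$, the numerator equals $e^{-\Omega(H_n-H_m)}$ times the coefficient of $x^{n-s}$ in $\exp\bigl(\Omega\sum_{j=m+1}^n x^j/j\bigr)$, which for degrees $\le n$ agrees with the coefficient in $e^{-\Omega\sum_{j\le m}x^j/j}(1-x)^{-\Omega}$; singularity analysis gives this coefficient as $e^{-\Omega H_m}(\Omega)_{n-s}/(n-s)!\,(1+o(1))$, so the numerator is $\sim e^{-\Omega H_n}(\Omega)_{n-s}/(n-s)!$, the denominator is exactly $e^{-\Omega H_n}(\Omega)_n/n!$ by (\ref{9.5}), and the ratio is $\frac{n!}{(n-s)!}\,\frac{(\Omega)_{n-s}}{(\Omega)_n}\to 1$. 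If the ratio converged to $\prod e^{-\theta_l/j}$ as you claim, your own decomposition would yield the limit $e^{-2\Omega H_m}\prod(\theta_l/j)^{a_j^{(l)}}/a_j^{(l)}!$, which is not a probability distribution.

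The source of the confusion shows in your first route: the restricted sum over tail configurations (parts $j\ge m+1$ only) is \emph{not} $(\Omega)_r/r!$ up to vanishing corrections. The discrepancy has nothing to do with parts larger than $n$; it comes from excluding the parts $j\le m$, and it contributes the non-vanishing factor $e^{-\Omega H_m}=\prod_{l}\prod_{j\le m}e^{-\theta_l/j}$ --- precisely the factor that supplies the Poisson normalization in the limit. Losing track of it is what makes your two bookkeepings mutually inconsistent. A further inaccuracy: the mean of $T_{m,n}=\sum_l\sum_{j>m}j\eta_j(\theta_l)$ is $\Omega(n-m)$, not $n-O(1)$, so for $\Omega\ne 1$ the point $n-s$ is far from the mean and a CLT-type local limit theorem does not apply directly; one needs the generating-function computation above, which is what the paper carries out in (\ref{9T})--(\ref{13.1T}), or the estimates of \cite{ArratiaBarbourTavarePoisson}. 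Once the limit of the ratio is corrected to $1$, your argument reduces to the paper's proof.
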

\begin{proof}
We want to compute the asymptotics of
\begin{equation}\label{12.7.1}
\Prob\left\{A_j^{(l)}(n,\theta_l)=a_j^{(l)},\; j=1,\ldots,m, l=1,\ldots,k\right\}
\end{equation}
as $n\rightarrow\infty$, where $a_j^{(l)}$ are
non-negative integers satisfying condition (\ref{NormalizationEntries}). By Proposition \ref{PROPOSITION9.4} we can
rewrite  (\ref{12.7.1}) as
\begin{equation}\label{12.7.2}
\begin{split}
&\Prob\left\{\eta_j(\theta_l)
=a_j^{(l)},\; j=1,\ldots,m,\; l=1,\ldots,k\right\}\\
&\times\frac{\Prob\left\{\sum\limits_{l=1}^k\sum\limits_{j=m+1}^nj\eta_j(\theta_l)=n-a^{(1)}-\ldots-a^{(k)}\right\}}{\Prob\left\{\sum\limits_{l=1}^k\sum\limits_{j=1}^nj\eta_j(\theta_l)=n\right\}},
\end{split}
\end{equation}
where $a^{(l)}=\sum\limits_{j=1}^mja_j^{(l)}$, $l=1,\ldots,k$.
It is enough to show that
\begin{equation}
\begin{split}
&\frac{\Prob\left\{\sum\limits_{l=1}^k\sum\limits_{j=m+1}^nj\eta_j(\theta_l)=n-a^{(1)}-\ldots-a^{(k)}\right\}}{\Prob\left\{\sum\limits_{l=1}^k\sum\limits_{j=1}^nj\eta_j(\theta_l)=n\right\}}\longrightarrow 1,
\end{split}
\end{equation}
as $n\rightarrow\infty$. Denote
\begin{equation}\label{12.7.3}
T_{m,n}(\theta_l)=\sum\limits_{j=m+1}^nj\eta_j(\theta_l).
\end{equation}
The right-hand side of (\ref{12.7.2}) has the form
\begin{equation}\label{12.7.4}
\frac{\Prob\left\{T_{m,n}(\theta_1)+\ldots+T_{m,n}(\theta_k)=n-a^{(1)}-\ldots-a^{(k)}\right\}}{\Prob\left\{T_{0,n}(\theta_1)+\ldots+T_{0,n}(\theta_k)=n\right\}}.
\end{equation}
Formula (\ref{9.5}) implies
\begin{equation}
\Prob\left\{T_{0,n}(\theta_1)+\ldots+T_{0,n}(\theta_k)=n\right\}=
\frac{(\theta_1+\ldots+\theta_k)_n}{n!}e^{-(\theta_1+\ldots+\theta_k)\sum_{j=1}^nj^{-1}}.
\end{equation}
In order to proceed, we wish to compute the generating function of the random variable $\sum\limits_{l=1}^kT_{m,n}(\theta_l)$. Using equation (\ref{12.7.3})
we obtain
\begin{equation}\label{9T}
\begin{split}
&\mathbb{E}\left(x^{\sum\limits_{l=1}^kT_{m,n}(\theta_l)}\right)
=\mathbb{E}\left(x^{\sum\limits_{l=1}^k
\sum\limits_{j=m+1}^nj\eta_j(\theta_l)}\right)
=\prod\limits_{l=1}^k\prod\limits_{j=m+1}^n
\mathbb{E}\left(x^{j\eta_j(\theta_l)}\right)\\
&=\prod\limits_{l=1}^k\prod\limits_{j=m+1}^n
e^{\frac{\theta_l}{j}(x^j-1)}.
\end{split}
\end{equation}
For a positive integer-valued random variable $Z$ the following formula holds true
\begin{equation}\label{10T}
\Prob\left(Z=k\right)=\frac{1}{k!}\left(\frac{d^k}{dx^k}\mathbb{E}\left(x^Z\right)\right)\biggl\vert_{x=0}.
\end{equation}
Using (\ref{9T}) and (\ref{10T}) we obtain the formula
\begin{equation}\label{11.1T}
\begin{split}
&(n-a^{(1)}-\ldots-a^{(k)})!
\Prob\left(\sum\limits_{l=1}^kT_{m,n}(\theta_l)=n-a^{(1)}-\ldots-a^{(k)}\right)\\
&=e^{-\sum\limits_{l=1}^k\sum\limits_{j=m+1}^n\frac{\theta_l}{j}}\biggl[\frac{d^{n-a^{(1)}-\ldots-a^{(k)}}}{dx^{n-a^{(1)}-\ldots-a^{(k)}}}
e^{\sum\limits_{l=1}^k\sum\limits_{j=m+1}^n\frac{\theta_lx^j}{j}}\biggr]\biggr\vert_{x=0}.
\end{split}
\end{equation}
Set
\begin{equation}
\begin{split}
&g_l(x)=e^{-\theta_l\sum\limits_{j=1}^{m}
\frac{x^j}{j}},\; l=1,\ldots,k;\;\; G(x)=g_1(x)\ldots g_k(x).
\end{split}
\end{equation}
Then
\begin{equation}\label{12.1T}
\begin{split}
&
\Prob\left(\sum\limits_{l=1}^kT_{m,n}(\theta_l)=n-a^{(1)}-\ldots-a^{(k)}\right)\\
&=\frac{e^{-\sum\limits_{l=1}^k\sum\limits_{j=m+1}^n\frac{\theta_l}{j}}}{(n-a^{(1)}-\ldots-a^{(k)})!}
\biggl[\frac{d^{n-a^{(1)}-\ldots-a^{(k)}}}{dx^{n-a^{(1)}-\ldots-a^{(k)}}}
\left(G(x)e^{-\left(\theta_1+\ldots+\theta_k\right)\log(1-x)}\right)
\biggr]\biggr\vert_{x=0}.
\end{split}
\end{equation}
If $A=a^{(1)}+\ldots+a^{(k)}$, and $\Omega=\theta_1+\ldots+\theta_k$, then
\begin{equation}
\begin{split}
&\biggl[\frac{d^{n-A}}{dx^{n-A}}\left(G(x)e^{-\Omega\log(1-x)}\right)\biggr]\biggl\vert_{x=0}
=G(1)\left(\Omega\right)_{n-A}-G'(1)(\Omega-1)_{n-A}+\frac{G^{''}(1)}{2}(\Omega-2)_{n-A}-\ldots\\
&=e^{-\sum\limits_{l=1}^k\sum\limits_{j=1}^{m}\theta_l}\left(\Omega\right)_{n-A}\left[1+\frac{\left(m\sum\limits_{l=1}^k\theta_l\right)(\Omega-1)}{\Omega-1+n-A}+O\left(\frac{1}{n^2}\right)\right].
\end{split}
\nonumber
\end{equation}
Therefore,
\begin{equation}\label{13.1T}
\begin{split}
&
\Prob\left(\sum\limits_{l=1}^kT_{m,n}(\theta_l)=n-a^{(1)}-\ldots-a^{(k)}\right)=\frac{e^{-\sum\limits_{l=1}^k\sum\limits_{j=1}^n\frac{\theta_l}{j}}
\left(\theta_1+\ldots+\theta_k\right)_{n-a^{(1)}-\ldots-a^{(k)}}}{(n-a^{(1)}-\ldots-a^{(k)})!}\\
&\times
\left[1+\frac{\left(m\sum\limits_{l=1}^k\theta_l\right)(\theta_1+\ldots+\theta_k-1)}{\theta_1+\ldots+\theta_k-1+n-a^{(1)}-\ldots-a^{(k)}}+O\left(\frac{1}{n^2}\right)\right].
\end{split}
\end{equation}
We conclude that
\begin{equation}
\begin{split}
&\frac{\Prob\left\{\sum\limits_{l=1}^k\sum\limits_{j=m+1}^nj\eta_j(\theta_l)=n-a^{(1)}-\ldots-a^{(k)}\right\}}{\Prob\left\{\sum\limits_{l=1}^k\sum\limits_{j=1}^nj\eta_j(\theta_l)=n\right\}}\\
&=\frac{n!}{(n-a^{(1)}-\ldots-a^{(k)})!}\;
\frac{\left(\theta_1+\ldots+\theta_k\right)_{n-a^{(1)}-\ldots-a^{(k)}}}{\left(\theta_1+\ldots+\theta_k\right)_{n}}\\
&\times
\left[1+\frac{\left(m\sum\limits_{l=1}^k\theta_l\right)(\theta_1+\ldots+\theta_k-1)}{\theta_1+\ldots+\theta_k-1+n-a^{(1)}-\ldots-a^{(k)}}+O\left(\frac{1}{n^2}\right)\right]\longrightarrow 1,
\end{split}
\end{equation}
as $n\rightarrow\infty$. Theorem \ref{TheoremConvergencePoisson} is proved.
\end{proof}
\begin{rem}In the situation where the mutation parameter grows with the sample size $n$ the Poisson approximation  for the Ewens sampling formula
was considered in Tsukuda \cite{Tsukuda1}. Methods of Ref. \cite{Tsukuda1}
can be applied to the refined Ewens sampling formula (equation (\ref{RefinedESF}))
as well.
We postpone this issue to future work.
\end{rem}

%%%%%%%%%%%%%%%%%%%%%%%%%%%%%%%%%%%%%%%%%%%%%%%%%%%%%%%%%%%%%%%%%%%%%%%%%%%%%%%%%%%%%%%%%%%%%%%%
%%%%%%%%%%%%%%%%%%%%%%%%%%%%%%%%%%%%%%%%%%%%%%%%%%%%%%%%%%%%%%%%%%%%%%%%%%%%%%%%%%%%%%%%%%%%%%%%
%%%%%%%%%%%%%%%%%%%%%%%%%%%%%%%%%%%%%%%%%%%%%%%%%%%%%%%%%%%%%%%%%%%%%%%%%%%%%%%%%%%%%%%%%%%%%%%%
%%%%%%%%%%%%%%%%%%%%%%%%%%%%%%%%%%%%%%%%%%%%%%%%%%%%%%%%%%%%%%%%%%%%%%%%%%%%%%%%%%%%%%%%%%%%%%%%
%%%%%%%%%%%%%%%%%%%%%%%%%%%%%%%%%%%%%%%%%%%%%%%%%%%%%%%%%%%%%%%%%%%%%%%%%%%%%%%%%%%%%%%%%%%%%%%%
%%%%%%%%%%%%%%%%%%%%%%%%%%%%%%%%%%%%%%%%%%%%%%%%%%%%%%%%%%%%%%%%%%%%%%%%%%%%%%%%%%%%%%%%%%%%%%%%
%%%%%%%%%%%%%%%%%%%%%%%%%%%%%%%%%%%%%%%%%%%%%%%%%%%%%%%%%%%%%%%%%%%%%%%%%%%%%%%%%%%%%%%%%%%%%%%%

\end{document}